\definecolor{myurlcolor}{rgb}{0.6,0,0}
\definecolor{mycitecolor}{rgb}{0,0,0.8}
\definecolor{myrefcolor}{rgb}{0,0,0.8}
\newtheorem{theorem}{Theorem}[section]
\newtheorem{proposition}[theorem]{Proposition}   
\newtheorem{definition}[theorem]{Definition}   
\newtheorem{lemma}[theorem]{Lemma}   
\newtheorem{corollary}[theorem]{Corollary}   
\newtheorem*{proposition*}{Proposition}
\newtheorem*{theorem*}{Theorem}
\theoremstyle{remark}
\newtheorem{example}[theorem]{Example}
\newtheorem*{example*}{Example}
\newcommand{\maps}{\colon}
\newcommand{\define}[1]{{\bf \boldmath #1}}
\newcommand{\vect}[1]{{\F^{#1} \oplus {(\F^{#1})}^\ast}}
\newcommand{\id}{\mathrm{id}}
\newcommand{\s}{\sigma}
\newcommand{\asrelto}{\nrightarrow}
\newcommand\R{{\mathbb R}}
\newcommand\F{{\mathbb F}}
\newcommand{\C}{\mathcal{C}}
\renewcommand{\H}{\mathcal{H}}
\newcommand{\E}{\mathcal{E}}
\newcommand{\M}{\mathcal{M}}
\newcommand{\Circ}{\mathrm{Circ}}
\newcommand{\Circuit}{\mathrm{Circuit}}
\newcommand{\Lag}{\mathrm{Lag}}
\newcommand{\Dirich}{\mathrm{Dirich}}
\newcommand{\Lin}{\mathrm{Lin}}
\newcommand{\Rel}{\mathrm{Rel}}
\newcommand{\Corel}{\mathrm{Corel}}
\newcommand{\Cospan}{\mathrm{Cospan}}
\newcommand{\opp}{\mathrm{op}}
\newcommand{\idn}{\mathrm{id}}
\newcommand{\Fin}{\mathrm{Fin}}
\newcommand{\Vect}{\mathrm{Vect}}
\newcommand{\Set}{\mathrm{Set}}
\newcommand{\Inj}{\mathrm{Inj}}
\tikzset{font=\footnotesize}
\tikzstyle{none}=[inner sep=0pt]
\tikzstyle{connection}=[circuit symbol open,
\tikzstyle{bdot}=[circle, fill=black, draw, inner sep=1.5pt, anchor=center]
\tikzstyle{circ}=[circle,fill=black,draw,inner sep=3pt]
\newcommand{\mult}[1]
{
\begin{aligned}
    \resizebox{#1}{!}{
\begin{tikzpicture}
	\begin{pgfonlayer}{nodelayer}
		\node [style=none] (0) at (1, -0) {};
		\node [style=circ] (1) at (0.125, -0) {};
		\node [style=none] (2) at (-1, 0.5) {};
		\node [style=none] (3) at (-1, -0.5) {};
	\end{pgfonlayer}
	\begin{pgfonlayer}{edgelayer}
		\draw[line width=2pt] (0.center) to (1.center);
		\draw[line width=2pt] [in=0, out=120, looseness=1.20] (1.center) to (2.center);
		\draw[line width=2pt] [in=0, out=-120, looseness=1.20] (1.center) to (3.center);
	\end{pgfonlayer}
      \end{tikzpicture}}
\end{aligned}
}
\newcommand{\unit}[1]
{
  \begin{aligned}
    \resizebox{#1}{!}{
\begin{tikzpicture}
	\begin{pgfonlayer}{nodelayer}
		\node [style=none] (spaceup) at (0, 0.5) {};
		\node [style=none] (spacedown) at (0, -0.5) {};
		\node [style=none] (0) at (1, -0) {};
		\node [style=none] (1) at (-1, -0) {};
		\node [style=circ] (2) at (0, -0) {};
	\end{pgfonlayer}
	\begin{pgfonlayer}{edgelayer}
		\draw[line width=2pt] (0.center) to (2);
	\end{pgfonlayer}
      \end{tikzpicture}}
  \end{aligned}
}
\newcommand{\comult}[1]
{
\begin{aligned}
    \resizebox{#1}{!}{
\begin{tikzpicture}
	\begin{pgfonlayer}{nodelayer}
		\node [style=none] (0) at (-1, -0) {};
		\node [style=circ] (1) at (-0.125, -0) {};
		\node [style=none] (2) at (1, 0.5) {};
		\node [style=none] (3) at (1, -0.5) {};
	\end{pgfonlayer}
	\begin{pgfonlayer}{edgelayer}
		\draw[line width=2pt] (0.center) to (1.center);
		\draw[line width=2pt] [in=180, out=60, looseness=1.20] (1.center) to (2.center);
		\draw[line width=2pt] [in=180, out=-60, looseness=1.20] (1.center) to (3.center);
	\end{pgfonlayer}
      \end{tikzpicture}}
\end{aligned}
}
\newcommand{\counit}[1]
{
  \begin{aligned}
    \resizebox{#1}{!}{
\begin{tikzpicture}
	\begin{pgfonlayer}{nodelayer}
		\node [style=none] (spaceup) at (0, 0.5) {};
		\node [style=none] (spacedown) at (0, -0.5) {};
		\node [style=none] (0) at (-1, -0) {};
		\node [style=none] (1) at (1, -0) {};
		\node [style=circ] (2) at (0, -0) {};
	\end{pgfonlayer}
	\begin{pgfonlayer}{edgelayer}
		\draw[line width=2pt] (0.center) to (2);
	\end{pgfonlayer}
      \end{tikzpicture}}
  \end{aligned}
}
\newcommand{\idone}[1]
{
  \begin{aligned}
    \resizebox{#1}{!}{
     \begin{tikzpicture}
	\begin{pgfonlayer}{nodelayer}
		\node [style=none] (0) at (-1, -0) {};
		\node [style=none] (1) at (1, -0) {};
		\node [style=none] (2) at (0, 0.5) {};
		\node [style=none] (3) at (0, -0.5) {};
	\end{pgfonlayer}
	\begin{pgfonlayer}{edgelayer}
		\draw[line width=2pt] (1.center) to (0.center);
	\end{pgfonlayer}
\end{tikzpicture} 
    }
  \end{aligned}
}
\newcommand{\swap}[1]
{
  \begin{aligned}
    \resizebox{#1}{!}{
\begin{tikzpicture}
	\begin{pgfonlayer}{nodelayer}
		\node [style=none] (2) at (-0.5, -0.5) {};
		\node [style=none] (3) at (-2, 0.5) {};
		\node [style=none] (4) at (-0.5, 0.5) {};
		\node [style=none] (5) at (-2, -0.5) {};
	\end{pgfonlayer}
	\begin{pgfonlayer}{edgelayer}
		\draw[line width=2pt] [in=180, out=0, looseness=1.00] (3.center) to (2.center);
		\draw[line width=2pt] [in=0, out=180, looseness=1.00] (4.center) to (5.center);
	\end{pgfonlayer}
\end{tikzpicture}
    }
  \end{aligned}
}
\newcommand{\assocl}[1]
{
  \begin{aligned}
    \resizebox{#1}{!}{
\begin{tikzpicture}
	\begin{pgfonlayer}{nodelayer}
		\node [style=circ] (0) at (0.125, -0) {};
		\node [style=none] (1) at (-1, 0.5) {};
		\node [style=none] (2) at (-1, -0.5) {};
		\node [style=none] (3) at (0, -1) {};
		\node [style=none] (4) at (2.25, -0.5) {};
		\node [style=none] (5) at (0.25, -0) {};
		\node [style=circ] (6) at (1.25, -0.5) {};
		\node [style=none] (7) at (-1, -1) {};
	\end{pgfonlayer}
	\begin{pgfonlayer}{edgelayer}
		\draw[line width=2pt] [in=0, out=120, looseness=1.20] (0.center) to (1.center);
		\draw[line width=2pt] [in=0, out=-120, looseness=1.20] (0.center) to (2.center);
		\draw[line width=2pt] (4.center) to (6);
		\draw[line width=2pt] [in=0, out=120, looseness=1.20] (6) to (5.center);
		\draw[line width=2pt] [in=0, out=-120, looseness=1.20] (6) to (3.center);
		\draw[line width=2pt] (3.center) to (7.center);
	\end{pgfonlayer}
      \end{tikzpicture}}
  \end{aligned}
}
\newcommand{\assocr}[1]
{
  \begin{aligned}
    \resizebox{#1}{!}{
\begin{tikzpicture}
	\begin{pgfonlayer}{nodelayer}
		\node [style=circ] (0) at (0.125, -0.5) {};
		\node [style=none] (1) at (-1, -1) {};
		\node [style=none] (2) at (-1, 0) {};
		\node [style=none] (3) at (0, 0.5) {};
		\node [style=none] (4) at (2.25, 0) {};
		\node [style=none] (5) at (0.25, -0.5) {};
		\node [style=circ] (6) at (1.25, 0) {};
		\node [style=none] (7) at (-1, 0.5) {};
	\end{pgfonlayer}
	\begin{pgfonlayer}{edgelayer}
		\draw[line width=2pt] [in=0, out=-120, looseness=1.20] (0.center) to (1.center);
		\draw[line width=2pt] [in=0, out=120, looseness=1.20] (0.center) to (2.center);
		\draw[line width=2pt] (4.center) to (6);
		\draw[line width=2pt] [in=0, out=-120, looseness=1.20] (6) to (5.center);
		\draw[line width=2pt] [in=0, out=120, looseness=1.20] (6) to (3.center);
		\draw[line width=2pt] (3.center) to (7.center);
	\end{pgfonlayer}
      \end{tikzpicture}}
  \end{aligned}
}
\newcommand{\unitl}[1]
{
  \begin{aligned}
    \resizebox{#1}{!}{
\begin{tikzpicture}
	\begin{pgfonlayer}{nodelayer}
		\node [style=none] (0) at (1, -0) {};
		\node [style=circ] (1) at (0.125, -0) {};
		\node [style=circ] (2) at (-1, 0.5) {};
		\node [style=none] (3) at (-1, -0.5) {};
		\node [style=none] (4) at (-2, -0.5) {};
	\end{pgfonlayer}
	\begin{pgfonlayer}{edgelayer}
		\draw[line width=2pt] (0.center) to (1.center);
		\draw[line width=2pt] [in=0, out=120, looseness=1.20] (1.center) to (2.center);
		\draw[line width=2pt] [in=0, out=-120, looseness=1.20] (1.center) to (3.center);
		\draw[line width=2pt] (4.center) to (3.center);
	\end{pgfonlayer}
\end{tikzpicture}
    }
  \end{aligned}
}
\newcommand{\commute}[1]
{
  \begin{aligned}
    \resizebox{#1}{!}{
\begin{tikzpicture}
	\begin{pgfonlayer}{nodelayer}
		\node [style=none] (0) at (1.25, -0) {};
		\node [style=circ] (1) at (0.375, -0) {};
		\node [style=none] (2) at (-0.5, -0.5) {};
		\node [style=none] (3) at (-2, 0.5) {};
		\node [style=none] (4) at (-0.5, 0.5) {};
		\node [style=none] (5) at (-2, -0.5) {};
	\end{pgfonlayer}
	\begin{pgfonlayer}{edgelayer}
		\draw[line width=2pt] (0.center) to (1.center);
		\draw[line width=2pt] [in=0, out=-120, looseness=1.20] (1.center) to (2.center);
		\draw[line width=2pt] [in=180, out=0, looseness=1.00] (3.center) to (2.center);
		\draw[line width=2pt] [in=0, out=120, looseness=1.20] (1.center) to (4.center);
		\draw[line width=2pt] [in=0, out=180, looseness=1.00] (4.center) to (5.center);
	\end{pgfonlayer}
\end{tikzpicture}
    }
  \end{aligned}
}
\newcommand{\frobs}[1]
{
  \begin{aligned}
    \resizebox{#1}{!}{
\begin{tikzpicture}
	\begin{pgfonlayer}{nodelayer}
		\node [style=none] (0) at (-1.5, 0.5) {};
		\node [style=circ] (1) at (-0.75, 0.5) {};
		\node [style=none] (2) at (0.25, -0) {};
		\node [style=none] (3) at (0.25, 1) {};
		\node [style=circ] (4) at (1, -0.5) {};
		\node [style=none] (5) at (0, -0) {};
		\node [style=none] (6) at (1.75, -0.5) {};
		\node [style=none] (7) at (0, -1) {};
		\node [style=none] (8) at (1.75, 1) {};
		\node [style=none] (9) at (-1.5, -1) {};
	\end{pgfonlayer}
	\begin{pgfonlayer}{edgelayer}
		\draw[line width=2pt] [in=180, out=-60, looseness=1.20] (1) to (2.center);
		\draw[line width=2pt] [in=180, out=60, looseness=1.20] (1) to (3.center);
		\draw[line width=2pt] (0.center) to (1);
		\draw[line width=2pt] (6.center) to (4);
		\draw[line width=2pt] [in=0, out=120, looseness=1.20] (4) to (5.center);
		\draw[line width=2pt] [in=0, out=-120, looseness=1.20] (4) to (7.center);
		\draw[line width=2pt] (3.center) to (8.center);
		\draw[line width=2pt] (7.center) to (9.center);
	\end{pgfonlayer}
\end{tikzpicture}
    }
  \end{aligned}
}
\newcommand{\frobx}[1]
{
  \begin{aligned}
    \resizebox{#1}{!}{
\begin{tikzpicture}
	\begin{pgfonlayer}{nodelayer}
		\node [style=circ] (0) at (-0.5, -0) {};
		\node [style=none] (1) at (-1.5, -0.5) {};
		\node [style=none] (2) at (-1.5, 0.5) {};
		\node [style=circ] (3) at (0.5, -0) {};
		\node [style=none] (4) at (1.5, -0.5) {};
		\node [style=none] (5) at (1.5, 0.5) {};
	\end{pgfonlayer}
	\begin{pgfonlayer}{edgelayer}
		\draw[line width=2pt] [in=0, out=-120, looseness=1.20] (0.center) to (1.center);
		\draw[line width=2pt] [in=0, out=120, looseness=1.20] (0.center) to (2.center);
		\draw[line width=2pt] [in=180, out=-60, looseness=1.20] (3) to (4.center);
		\draw[line width=2pt] [in=180, out=60, looseness=1.20] (3) to (5.center);
		\draw[line width=2pt] (0) to (3);
	\end{pgfonlayer}
\end{tikzpicture}
    }
  \end{aligned}
}
\newcommand{\spec}[1]
{
  \begin{aligned}
    \resizebox{#1}{!}{
\begin{tikzpicture}
	\begin{pgfonlayer}{nodelayer}
		\node [style=none] (0) at (1.75, -0) {};
		\node [style=circ] (1) at (0.75, -0) {};
		\node [style=none] (2) at (0, -0.5) {};
		\node [style=none] (3) at (0, 0.5) {};
		\node [style=circ] (4) at (-0.75, -0) {};
		\node [style=none] (5) at (0, -0.5) {};
		\node [style=none] (6) at (-1.75, -0) {};
		\node [style=none] (7) at (0, 0.5) {};
	\end{pgfonlayer}
	\begin{pgfonlayer}{edgelayer}
		\draw[line width=2pt] (0.center) to (1.center);
		\draw[line width=2pt] [in=0, out=-120, looseness=1.20] (1.center) to (2.center);
		\draw[line width=2pt] [in=0, out=120, looseness=1.20] (1.center) to (3.center);
		\draw[line width=2pt] (6.center) to (4);
		\draw[line width=2pt] [in=180, out=-60, looseness=1.20] (4) to (5.center);
		\draw[line width=2pt] [in=180, out=60, looseness=1.20] (4) to (7.center);
	\end{pgfonlayer}
\end{tikzpicture}
    }
  \end{aligned}
}
\newcommand{\cuptwo}[1]
{
\begin{aligned}
\begin{tikzpicture}[circuit ee IEC, set resistor graphic=var resistor IEC
      graphic,scale=.5]
\scalebox{1}{
		\node [style=none] (0) at (1, -0) {};
		\node [style=none] (1) at (0.125, -0) {};
		\node [style=none] (2) at (-1, 0.5) {};
		\node [style=none] (3) at (-1, -0.5) {};
		\node [style=none] (4) at (1, -0) {};
	\draw[line width = 1.5pt] (2) to [in=0, out=0,looseness = 4] (3);
}
      \end{tikzpicture}
\end{aligned}
}
\newcommand{\captwo}[1]
{
\begin{aligned}
\begin{tikzpicture}
[circuit ee IEC, set resistor graphic=var resistor IEC
      graphic,scale=.5]
\scalebox{1}{
		\node [style=none] (0) at (-.75, -0) {};
		\node [style=none] (1) at (-0.125, -0) {};
		\node [style=none] (2) at (1, 0.5) {};
		\node [style=none] (3) at (1, -0.5) {};
		\node [style=none] (4) at (-1, -0) {};
		\node [style=none] (5) at (-0.5, -.5) {};
		\node [style=none] (6) at (-0.5, .5) {};
	\draw[line width = 1.5pt] (2) to [in=180, out=180,looseness = 4] (3);
}
      \end{tikzpicture}
\end{aligned}
}
\begin{document}   

\begin{center}   
  {\bf A Compositional Framework for Passive Linear Networks \\}   
  \vspace{0.3cm}
  {\em John\ C.\ Baez \\}
  \vspace{0.3cm}
  {\small
 Department of Mathematics \\
    University of California \\
  Riverside CA, USA 92521 \\ and \\
 Centre for Quantum Technologies  \\
    National University of Singapore \\
    Singapore 117543  \\    } 
  \vspace{0.4cm}
  {\em Brendan Fong \\}
  \vspace{0.3cm}
  {\small Department of Mathematics  \\
    Massachusetts Institute of Technology  \\
   Cambridge MA, USA 02139  \\ }
  \vspace{0.3cm}   
  {\small email:  baez@math.ucr.edu, bfo@mit.edu\\} 
  \vspace{0.3cm}   
  {\small \today}
  \vspace{0.3cm}   
\end{center}   

\begin{abstract}
\noindent Passive linear networks are used in a wide variety of engineering applications, but
the best studied are electrical circuits made of resistors, inductors and capacitors.  
We describe a category where a morphism is a circuit of this sort with marked input and output terminals.  In this category, composition describes the process of attaching the outputs of one circuit to the inputs of another.   We construct a functor, dubbed the `black box functor', that takes a circuit, forgets its internal structure, and remembers only its external behavior.  Two circuits have the same external behavior if and only if they impose same relation between currents and potentials at their terminals.   The space of these currents and potentials naturally has the structure of a symplectic vector space, and the relation imposed by a circuit is a Lagrangian linear relation.  Thus, the black box functor goes from our category of circuits to a category with Lagrangian linear relations
as morphisms.   We prove that this functor is symmetric monoidal and indeed a hypergraph functor.   We assume the reader is familiar with category theory, but not with circuit theory or symplectic linear algebra.  
\end{abstract}

\tableofcontents


\section{Introduction}\label{sec:intro}
In the late 1940s, just as Feynman was developing his diagrams for processes in particle physics, Eilenberg and Mac Lane initiated their work on category theory.  Over the subsequent decades, and especially in the work of Joyal and Street in the 1980s \cite{JS1,JS2}, it became clear that these developments were profoundly linked: monoidal categories have a precise graphical representation in terms of string diagrams, and conversely monoidal categories provide an algebraic foundation for the intuitions behind Feynman diagrams.  The key insight is the use of categories where morphisms describe physical processes, rather than structure-preserving maps between mathematical objects \cite{AC,BaezStay,CP,Se}.   More recently, the same techniques have filtered into other applications.  This paper is part of a program of applying string diagrams to engineering, with the aim of giving diverse diagram languages a unified foundation based on category theory \cite{BCR,BE,BSZ,CF,Er,Fong16,FS18,KSW,Sp}. 

Indeed, even before physicists began using Feynman diagrams, various branches of engineering were using diagrams that in retrospect are closely related.   Foremost among these are the ubiquitous electrical circuit diagrams. Although less well-known, similar diagrams are used to describe networks consisting of mechanical, hydraulic, thermodynamic and chemical systems.   Further work, pioneered in particular by 
Forrester \cite{Fo} and Odum \cite{Od}, applies similar diagrammatic methods to biology, ecology, and economics.

As discussed in detail by Olsen \cite{Ol}, Paynter \cite{Pa} and others \cite{Brown,KRM}, there are mathematically precise analogies between these different systems.  In each case, the system's state is described by variables that come in pairs, with one variable in each pair playing the role of  `displacement' and the other playing the role of `momentum'.  In engineering, the time derivatives of these variables are sometimes called `flow' and `effort'.    In classical mechanics, this pairing of variables is well understood using
symplectic geometry.  Thus, any mathematical formulation of the diagrams used to
describe networks in engineering needs to take symplectic geometry as well as category
theory into account. 

\vskip 1em
\begin{small}
\begin{center}
\begin{tabular}{|c||c|c|c|c|}
\hline
& displacement  &  flow & momentum & effort \\
& $q$ & $\dot{q}$ & $p$ & $\dot{p}$ \\
\hline\hline
Electronics & charge & current & flux linkage & voltage\\
\hline
Mechanics (translation) & position & velocity & momentum & force\\
\hline
Mechanics (rotation) & angle & angular velocity & angular momentum & torque\\
\hline
Hydraulics & volume & flow & pressure momentum & pressure\\
\hline
Thermodynamics & entropy & entropy flow & temperature momentum & temperature \\
\hline
Chemistry & moles & molar flow & chemical momentum & chemical potential \\
\hline
\end{tabular}
\end{center}
\end{small}


Although we shall keep the broad applicability of network diagrams in the back of our minds, we couch our discussion in terms of electrical circuits, for the sake of familiarity. In this paper our goal is somewhat limited.  We only study circuits built from passive components: that is, those that do not produce energy.  Thus, we exclude batteries and current sources.  We only consider components that respond linearly to an applied voltage.   Thus, we exclude components such as nonlinear resistors or diodes.  Finally, we only consider components with one input and one output, so that a circuit can be described as a graph with edges labelled by components.  Thus, we also exclude transformers.  The most familiar components our framework covers are linear resistors, capacitors and inductors.

While we treat more general circuits in a companion paper \cite{BCR}, the class of circuits considered here has appealing mathematical properties, and is worthy of deep study.  Indeed, this class has been studied intensively for many decades by electrical engineers \cite{AV,Budak,Slepian}.  Even circuits made exclusively of resistors have inspired work by mathematicians of the caliber of Weyl \cite{Weyl} and Smale \cite{Smale}.  

Our work relies on this research.  All we are adding is an emphasis on symplectic geometry and an explicitly compositional framework, which clarifies the way a larger circuit can be built from smaller pieces.  This is where monoidal categories become important: the main operations for building circuits from pieces are composition and tensoring.
 
Our strategy is most easily illustrated for circuits made of linear resistors.  Such a resistor dissipates power, turning useful energy into heat at a rate determined by the voltage across the resistor.  However, a remarkable fact is that a circuit made of these resistors always acts to \emph{minimize} the power dissipated this way.  This `principle of minimum power' can be seen as the reason symplectic geometry becomes important in understanding circuits made of resistors, just as the principle of least action leads to the role of symplectic geometry in classical mechanics.  

Here is a circuit made of linear resistors:
\[
\begin{tikzpicture}[circuit ee IEC, set resistor graphic=var resistor IEC graphic]
\node[contact] (I1) at (0,2) {};
\node[contact] (I2) at (0,0) {};
\node[contact] (O1) at (5.83,1) {};
\node(input) at (-2,1) {\small{\textsf{inputs}}};
\node(output) at (7.83,1) {\small{\textsf{outputs}}};
\draw (I1) 	to [resistor] node [label={[label distance=2pt]85:{$3\Omega$}}] {} (2.83,1);
\draw (I2)	to [resistor] node [label={[label distance=2pt]275:{$1\Omega$}}] {} (2.83,1)
				to [resistor] node [label={[label distance=3pt]90:{$4\Omega$}}] {} (O1);
\path[color=purple, very thick, shorten >=10pt, ->, >=stealth, bend left] (input) edge (I1);		\path[color=purple, very thick, shorten >=10pt, ->, >=stealth, bend right] (input) edge (I2);		
\path[color=purple, very thick, shorten >=10pt, ->, >=stealth] (output) edge (O1);
\end{tikzpicture}
\]
The wiggly lines are resistors, and their resistances are written beside them: for example,
$3\Omega$ means 3 ohms, an ohm being a unit of resistance.  To formalize this, define an
open circuit to consist of
\begin{itemize}
\item a set $N$ of nodes,
\item a set $E$ of edges, 
\item maps $s,t \maps E \to N$ sending each edge to its source and target node,
\item a map $r\maps E \to (0,\infty)$ specifying the resistance of the resistor 
labelling each edge, 
\item maps $i \maps X \to N$, $o \maps Y \to N$ specifying the
inputs and outputs of the circuit.
\end{itemize}

When we run electric current through such a circuit, each node $n \in N$ gets
a potential $\phi(n) \in \R$.  The voltage across an edge $e \in E$ is defined as the 
change in potential as we move from to the source of $e$ to its target, $\phi(t(e)) - 
\phi(s(e))$, and the power dissipated by the resistor on this edge equals
\[      
\frac{1}{r(e)}\big(\phi(t(e))-\phi(s(e))\big)^2. 
\]
The total power dissipated by the circuit is therefore twice
\[   
P(\phi) = \frac{1}{2}\sum_{e \in E} \frac{1}{r(e)}\big(\phi(t(e))-\phi(s(e))\big)^2.
\]
The factor of $\frac{1}{2}$ is convenient in some later calculations.  
Note that $P$ is a nonnegative quadratic form on the vector space $\R^N$.
However, not every nonnegative quadratic form on $\R^N$ arises in this way from some circuit of linear resistors with $N$ as its set of nodes.  The quadratic forms that do arise are called `Dirichlet forms'.  They have been extensively investigated \cite{Fukushima,MR,Sabot1997,Sabot2004}, and they play a major role in our work.

We write $\partial N = i(X) \cup o(Y)$ for the set of terminals: that is,
nodes corresponding to inputs and outputs.    The principle of minimum
power says that if we fix the potential at the terminals, the circuit will choose
the potential at other nodes to minimize the total power dissipated.   
An element $\psi$ of the vector space $\R^{\partial N}$ assigns a potential 
to each terminal.   Thus, if we fix $\psi$, the total power dissipated will be twice
\[
  Q(\psi) = \min_{\substack{ \phi \in \R^N \\ \phi\vert_{\partial N} = \psi}} \; P(\phi)  
\]
The function $Q \maps \R^{\partial N} \to \R$ is again a Dirichlet form.  We call it the `power functional' of the circuit.  

Now, suppose we are unable to see the internal workings of a circuit, and can only observe its external behavior: that is, the potentials at its terminals and the currents flowing into or out of these terminals.   As we shall see, this behavior is completely determined by the power functional $Q$.  The reason is that the current at any terminal can be obtained by differentiating $Q$ with respect to the potential at this terminal, and relations of this form are \emph{all} the relations that hold between potentials and currents at the terminals.   

The Laplace transform allows us to generalize these facts to circuits that also contain 
linear inductors and capacitors, simply by changing the field we work over and speaking 
of `impedance' rather than resistance.   Indeed, we can define open circuits for any 
field $\F$ with a well-behaved subset $\F^+$ of `positive elements': instead of
a map $r \maps E \to (0,\infty)$ labelling each edge with a resistance, such a circuit
has a map $Z \maps E \to \F^+$ labelling each edge with an impedance.   

We shall construct a category $\Circ$ where, roughly speaking, an object is a finite set, a
morphism $f \maps X \to Y$ is an open circuit with input set $X$ and output set $Y$, and
composition is given by identifying the outputs of one circuit with the inputs
of the next, and taking the resulting union of labelled graphs.  Each
such circuit gives rise to a Dirichlet form, and we prove that this Dirichlet form 
completely describes the externally observable behavior of the circuit.  

Given this, it would be nice to have
a category with Dirichlet forms as morphisms, and a functor from $\Circ$ to this category.  But
although there is a notion of composition for Dirichlet forms, we show that it lacks
identity morphisms, or equivalently, it lacks morphisms representing ideal wires
of zero impedance. To address this, we turn to Lagrangian subspaces of
symplectic vector spaces.  These generalize quadratic forms via the map
\[
  \Big(Q\maps \F^{\partial N} \to \F\Big) \longmapsto \Big(\mathrm{Graph}(dQ) =
  \{(\psi, dQ_\psi) \mid \psi \in \F^{\partial N} \} \subseteq \F^{\partial
  N} \oplus (\F^{\partial N})^\ast\Big)
\]
taking a quadratic form $Q$ on the vector space $\F^{\partial N}$
over a field $\F$ to the graph of its differential $dQ$. Here we think of the
symplectic vector space $\F^{\partial N} \oplus (\F^{\partial N})^\ast$ as the
state space of the circuit, and the subspace $\mathrm{Graph}(dQ)$ as the
subspace of attainable states, with $\psi \in \F^{\partial N}$ describing the
potentials at the terminals, and $dQ_\psi \in (\F^{\partial N})^\ast$ the
currents. 

The advantage of Lagrangian subspaces is that if we take a circuit made of
parallel resistors and let their resistances tend to zero, while the limit does not
give a Dirichlet form, it gives a well-defined Lagrangian subspace.
Indeed, there is a category $\Lag\Rel$ with finite sets as objects and
Lagrangian relations as morphisms from a finite set $X$ to a finite set $Y$:
that is, Lagrangian subspaces of $\overline{\vect{X}} \oplus \vect{Y} $, where
$\overline{V}$ is the symplectic vector space conjugate to $V$.   

To move from the Lagrangian subspace defined by the graph of the differential of
the power functional to a morphism in the category $\Lag\Rel$---that
is, to a Lagrangian relation---we must treat seriously the input and output
functions of the circuit. These express the circuit as built upon a cospan   
\[
  \xymatrix{
    & N \\
    X \ar[ur]^{i} && Y. \ar[ul]_o
  }
\]
Cospans model systems with two `ends', an input and output end, but without any
connotation of directionality: we might just as well exchange the role of the
inputs and outputs by taking the mirror image of the above diagram. The 
input and output functions simply mark the terminals
we may glue to the terminals of another circuit, and the pushout of cospans
gives formal precision to this gluing construction.

One upshot of this cospan framework is that we may consider circuits with elements
of $N$ that are both inputs and outputs, such as this one:
\[
  \begin{tikzpicture}[circuit ee IEC, set resistor graphic=var resistor iec graphic]
    \node[contact] (c1) at (0,2) {};
    \node[contact] (c2) at (0,0) {};
    \node(input) at (-2,1) {\small{\textsf{inputs}}};
    \node(output) at (2,1) {\small{\textsf{outputs}}};
    \path[color=purple, very thick, shorten >=10pt, ->, >=stealth, bend left]
    (input) edge (c1);		
    \path[color=purple, very thick, shorten >=10pt, ->, >=stealth, bend right]
    (input) edge (c2);	
    \path[color=purple, very thick, shorten >=10pt, ->, >=stealth, bend right]
    (output) edge (c1);
    \path[color=purple, very thick, shorten >=10pt, ->, >=stealth, bend left]
    (output) edge (c2);
  \end{tikzpicture}
\]
This corresponds to the identity morphism on the finite set with two elements.
Another is that some points may be considered an input or output multiple
times; we draw this:
\[
  \begin{tikzpicture}[circuit ee IEC, set resistor graphic=var resistor IEC graphic]
    \node[contact] (I1) at (0,0) {};
    \node[contact] (O1) at (3,0) {};
    \node(input) at (-2,0) {\small{\textsf{inputs}}};
    \node(output) at (5,0) {\small{\textsf{outputs}}};
    \draw (I1) 	to [resistor] node [label={[label distance=3pt]90:{$1\Omega$}}]
    {} (O1);
    \path[color=purple, very thick, shorten >=10pt, ->, >=stealth, bend left] (input)
    edge (I1);		
    \path[color=purple, very thick, shorten >=10pt, ->,
    >=stealth, bend right] (input) edge (I1);		
    \path[color=purple, very thick, shorten >=10pt, ->, >=stealth] (output) edge (O1);
  \end{tikzpicture}
\]
This allows us to connect two distinct outputs to the above double input.

Given a set $X$ of inputs or outputs, we understand the electrical behavior on this set 
by considering the symplectic vector space $\vect{X}$, the direct sum of the space
$\F^X$ of potentials and the space ${(\F^X)}^\ast$ of currents at these points.
A Lagrangian relation specifies which states of the output space $\vect{Y}$ are
allowed for each state of the input space $\vect{X}$.
Turning the Lagrangian subspace $\mathrm{Graph}(dQ)$ of a circuit into this
information requires that we understand the `symplectification' 
\[  Sf\maps \vect{B} \to \vect{A} \] 
and `twisted symplectification'
\[  S^tf\maps \vect{B} \to \overline{\vect{A}}\]
of a function $f\maps A \to B$ between finite sets.  In particular we need to understand how these apply to the input and output functions with codomain restricted to $\partial N$; abusing notation, we also write these $i\maps X \to \partial N$ and $o\maps Y \to \partial N$.

The symplectification is a Lagrangian relation, and the catch
phrase is that it `copies voltages' and `splits currents'.  More precisely,
for any given potential-current pair $(\psi,\iota)$ in $\vect{B}$, its image
under $Sf$ comprises all elements of $(\psi', \iota') \in \vect{A}$ such that
the potential at $a \in A$ is equal to the potential at $f(a) \in B$, and such
that, for each fixed $b \in B$, collectively the currents at the $a \in
f^{-1}(b)$ sum to the current at $b$.  We use the symplectification $So$ of the
output function to relate the state on $\partial N$ to that on the
outputs $Y$. As our current framework is set up to report the current \emph{out}
of each node, to describe input currents we define the twisted symplectification
$S^tf\maps \vect{B} \to \overline{\vect{A}}$ almost identically to the above, 
except that we flip the sign of the currents $\iota' \in (\F^A)^\ast$.  We use the 
twisted symplectification $S^ti$ of the input function to relate the state on $\partial N$
to that on the inputs.

We shall see that the Lagrangian relation corresponding to a circuit is the set of all
potential--current pairs that are possible at the inputs and outputs of that circuit.   
For instance, consider a resistor of resistance $r$, with one end considered as an
input and the other as an output:
\[
  \begin{tikzpicture}[circuit ee IEC, set resistor graphic=var resistor IEC graphic]
    \node[contact] (I1) at (0,0) {};
    \node[contact] (O1) at (3,0) {};
    \node(input) at (-2,0) {\small{\textsf{input}}};
    \node(output) at (5,0) {\small{\textsf{output}}};
    \draw (I1) 	to [resistor] node [label={[label distance=3pt]90:{$r$}}] {} (O1);
    \path[color=purple, very thick, shorten >=10pt, ->, >=stealth] (input)
    edge (I1);
    \path[color=purple, very thick, shorten >=10pt, ->, >=stealth] (output) edge (O1);
  \end{tikzpicture}
\]
To obtain the corresponding Lagrangian relation, we must first specify domain and
codomain symplectic vector spaces. In this case, as the input and output sets
each consist of a single point, these vector spaces are both $\F \oplus \F^\ast$,
where the first summand is understood as the space of potentials, and the second
the space of currents.

Now, the resistor has power functional $Q\maps \F^2 \to \F$ given by
\[   Q(\psi_1,\psi_2) = \frac1{2r}(\psi_2-\psi_1)^2, \]
and the graph of the differential of $Q$ is
\[
  \mathrm{Graph}(dQ) = \big\{\big(\psi_1,\psi_2,
  \tfrac1r(\psi_1-\psi_2),\tfrac1r(\psi_2-\psi_1)\big) \,\big|\, \psi_1,\psi_2 \in
  \F\big\} \subseteq \F^2 \oplus (\F^2)^\ast.
\]
In this example the input and output functions $i,o$ are simply the identity
functions on a one element set, so the symplectification of the output function
is simply the identity linear transformation, and the twisted symplectification
of the input function is the isomorphism  between conjugate
symplectic vector spaces $\F\oplus\F^\ast \to \overline{\F\oplus\F^\ast}$ mapping $(\phi,i)$ to $(\phi,-i)$ This implies that the behavior associated to this
circuit is the Lagrangian relation
\[
  \big\{(\psi_1,i,\psi_2,i) \,\big|\, \psi_1,\psi_2 \in \F, i =
  \tfrac1r(\psi_2-\psi_1)\big\}\subseteq \overline{\F \oplus \F^\ast} \oplus \F
    \oplus \F^\ast.
\]
This is precisely the set of potential-current pairs that are allowed at the
input and output of a resistor of resistance $r$.  In particular, the relation
$i = \tfrac1r(\psi_2-\psi_1)$ is well known in electrical engineering: it is
`Ohm's law'.

A crucial fact is that the process of mapping a circuit to its corresponding
Lagrangian relation identifies distinct circuits.  For example, a single 2-ohm resistor:
\[
  \begin{tikzpicture}[circuit ee IEC, set resistor graphic=var resistor IEC graphic]
    \node[contact] (I1) at (0,0) {};
    \node[contact] (O1) at (3,0) {};
    \node(input) at (-2,0) {\small{\textsf{input}}};
    \node(output) at (5,0) {\small{\textsf{output}}};
    \draw (I1) 	to [resistor] node [label={[label distance=3pt]90:{$2\Omega$}}] {} (O1);
    \path[color=purple, very thick, shorten >=10pt, ->, >=stealth] (input)
    edge (I1);
    \path[color=purple, very thick, shorten >=10pt, ->, >=stealth] (output) edge (O1);
  \end{tikzpicture}
\]
has the same Lagrangian relation as two 1-ohm resistors in series:
\[
  \begin{tikzpicture}[circuit ee IEC, set resistor graphic=var resistor IEC graphic]
    \node[contact] (I1) at (0,0) {};
    \node[circle, minimum width = 3pt, inner sep = 0pt, fill=black] (int) at
    (3,0) {};
    \node[contact] (O1) at (6,0) {};
    \node(input) at (-2,0) {\small{\textsf{input}}};
    \node(output) at (8,0) {\small{\textsf{output}}};
    \draw (I1) 	to [resistor] node [label={[label distance=3pt]90:{$1\Omega$}}] {} (int)
    to [resistor] node [label={[label distance=3pt]90:{$1\Omega$}}] {} (O1);
    \path[color=purple, very thick, shorten >=10pt, ->, >=stealth] (input)
    edge (I1);
    \path[color=purple, very thick, shorten >=10pt, ->, >=stealth] (output) edge (O1);
  \end{tikzpicture}
\]
The Lagrangian relation does not shed any light on the internal workings of a
circuit.  Thus, we call the process of computing this relation `black boxing':
it is like encasing the circuit in an opaque box, leaving only its terminals
accessible. Fortunately, the Lagrangian relation of a circuit is enough to
completely characterize its external behavior, including how it interacts when
connected with other circuits. 

Put more precisely, the black boxing process is \emph{functorial}: we can 
compute the black boxed version of a circuit made of parts by computing the
black boxed versions of the parts and then composing them.   In fact we shall 
prove that $\Circ$ and $\Lag\Rel$ are symmetric monoidal categories with
some extra structure, known as hypergraph categories (see Sec.~\ref{subsec:hypergraph}), 
and the black box functor preserves this structure:

\begin{theorem} \label{thm:main}
  There exists a hypergraph functor, the \define{black box functor}   
  \[ \blacksquare\maps \Circ \to \Lag\Rel, \]
  mapping a finite set $X$ to the symplectic vector space $\vect{X}$ it
  generates, and an open circuit $(N,E,s,t,r,i,o)$ to the Lagrangian
  relation 
  \[
    \bigcup_{v \in \mathrm{Graph}(dQ)} S^ti(v) \times So(v)
    \subseteq \overline{\F^X \oplus (\F^X)^\ast} \oplus \F^Y \oplus (\F^Y)^\ast,
  \]
  where $Q$ is the circuit's power functional.
\end{theorem}

The goal of this paper is to prove and explain this result. The proof itself is
more tricky than one might first expect, but our approach introduces various
concepts that are useful throughout the study of networks, such as `hypergraph categories',
`decorated cospans' and `corelations'.  These provide a general framework for
discussing open networked systems---and not only the passive linear systems
discussed here, but also others, such as Markov processes \cite{BFP} and chemical
reaction networks \cite{BP}.

\eject  

\subsection{Plan of the paper}
This paper is split into three parts, addressing in turn the questions:
\begin{enumerate}[I.]
  \item What do circuit diagrams mean?
  \item How do we interact with circuit diagrams?
  \item How is meaning preserved under these interactions?
\end{enumerate}

We begin Part \ref{part:circuits}, on the semantics of circuit diagrams, with
a discussion of circuits of linear resistors, developing the intuition for the
governing laws of passive linear circuits---Ohm's law, Kirchhoff's voltage law,
and Kirchhoff's current law---in a time-independent setting (Section
\ref{sec:resistors}). This allows us to develop the concept of
Dirichlet form as a representation of power consumption, and understand their
composition as minimizing power, an expression of the current law. In Section
\ref{sec:plcs}, the Laplace transform then allows us to generalize these ideas
to inductors and capacitors, speaking of impedance where we
formerly spoke of resistance, and generalizing Dirichlet forms from the field
$\R$ to the field $\R(s)$ of real rational functions.   In this setting the
principle of minimum power is replaced by a variational principle,
but the intuitions gained from circuits of resistors still remain useful. 

Part \ref{part:categories} introduces the syntax and semantics of open circuits.
In Section \ref{sec:cospans}, we develop machinery to construct categories of decorated
cospans. These are categories where the objects are finite sets and the morphisms
are cospans of finite sets equipped with some extra structure on
the apex.  Open circuits, as defined above, are an example of this
construction.   In Section \ref{sec:circsemantics} we define the category $\Circ$ whose
morphisms are open circuits.  We also construct a functor from $\Circ$ to the
category $\Lag\Cospan$, where a morphism is a cospan of finite sets with the apex
decorated by a Lagrangian relation.

Having introduced these prerequisites, we turn to black-boxing in Part \ref{part:blackbox}.
We start by introducing decorated corelation categories in Section \ref{sec:deccorel}.
In Section \ref{sec:blackbox} we show that $\Lag\Rel$ is isomorphic to a decorated corelation
category and use this to construct a functor from $\Lag\Cospan$ to $\Lag\Rel$.   Composing
this with the previous functor from $\Circ$ to $\Lag\Cospan$ we obtain 
the black box functor $\blacksquare \maps \Circ \to \Lag\Rel$.   Finally, in Section 
\ref{sec:main}, we use the tools we have developed prove our main result.

\subsection*{Acknowledgements}
We thank Jamie Vicary for useful conversations, an anonymous referee for a
careful reading and detailed comments, and Omar Camarena, Brandon Coya and
Bernhard Reinke for catching errors. BF would like to thank the Clarendon Fund;
Hertford College, Oxford; the Centre for Quantum Technologies, Singapore; and
USA AFOSR grants FA9550-14-1-0031 and FA9550-17-1-0058 for their support.  Much
of the material here appears in BF's Ph.D. thesis \cite{Fong16}.

\part{Passive Linear Circuits} \label{part:circuits}
In this part we begin by reviewing the properties of resistors, inductors,
capacitors, and circuits built from these.   Then we show that the behavior of
a circuit is determined by its power functional.    This is a quadratic form of 
a special sort called a `Dirichlet form'.  We define Dirichlet forms over any field
with a well-behaved set of positive elements, and in Thm.~\ref{thm:dirichlet_problem}
we establish the properties of Dirichlet forms that we need in the rest of our work.

\section{Circuits of linear resistors} \label{sec:resistors}
In order to let physical intuition lead the way, we begin by considering the
case of linear resistors. In this section we describe how to find the behavior 
of a circuit from its physical form, advocating in particular the perspective of
the principle of minimum power. This allows us to identify the external behavior
of a circuit with a so-called Dirichlet form representing the dependence of its
power consumption on potentials at its terminals.

\subsection{Circuits as labelled graphs}

The concept of an abstract open electrical circuit made of linear resistors is
well known in electrical engineering, but we shall need to formalize it with
more precision than usual.  The basic idea is that a circuit of linear resistors
is a graph whose edges are labelled by positive real numbers called
`resistances', and whose set of nodes is equipped with a subset of terminals. 
This unfolds as follows.

A (closed) circuit of resistors looks like this: 
\[
\begin{tikzpicture}[circuit ee IEC, set resistor graphic=var resistor IEC graphic]
\node (I1) at (0,0) {};
\node (I2) at (0,2) {};
\node (O1) at (5.83,1) {};
\draw (I1) 	to [resistor] node [label={[label distance=2pt]275:{$1\Omega$}}] {} (2.83,1);
\draw (I2)	to [resistor] node [label={[label distance=2pt]85:{$1\Omega$}}] {} (2.83,1)
				to [resistor] node [label={[label distance=3pt]90:{$2\Omega$}}] {} (O1);
\end{tikzpicture}
\]
We can consider this a labelled graph, with each resistor an edge of the graph,
its resistance its label, and the nodes of the graph the points at which
resistors are connected. 

A circuit is `open' if it can be connected to other circuits. To do this we
first mark points at which connections can be made by denoting some nodes as
terminals:
\[
\begin{tikzpicture}[circuit ee IEC, set resistor graphic=var resistor IEC graphic]
\node[contact] (I1) at (0,2) {};
\node[contact] (I2) at (0,0) {};
\node[contact] (O1) at (5.83,1) {};
\node(input) at (-2,1) {\small{\textsf{terminals}}};
\node(output) at (7.83,1) {\small{\textsf{terminal}}};
\draw (I1) 	to [resistor] node [label={[label distance=2pt]85:{$1\Omega$}}] {} (2.83,1);
\draw (I2)	to [resistor] node [label={[label distance=2pt]275:{$1\Omega$}}] {} (2.83,1)
				to [resistor] node [label={[label distance=3pt]90:{$2\Omega$}}] {} (O1);
\path[color=purple, very thick, shorten >=10pt, ->, >=stealth, bend left] (input) edge (I1);		\path[color=purple, very thick, shorten >=10pt, ->, >=stealth, bend right] (input) edge (I2);		
\path[color=purple, very thick, shorten >=10pt, ->, >=stealth] (output) edge (O1);
\end{tikzpicture}
\]

More formally, we define a \define{graph}
 to be a pair of functions $s,t\maps E \to N$ where $E$ and $N$ are finite sets.  We call elements of $E$ \define{edges} and elements of $N$ 
 \define{nodes}.  We say that the edge $e \in E$ has \define{source} $s(e)$ and
\define{target} $t(e)$, and also say that $e$ is an edge \define{from} $s(e)$
\define{to} $t(e)$.

To study circuits we need graphs with labelled edges:

\begin{definition}
Given a set $L$ of \define{labels}, an \define{$L$-graph} is a graph equipped with a function $r\maps E \to L$:
\[
\xymatrix{
L & E \ar@<2.5pt>[r]^{s} \ar@<-2.5pt>[r]_{t} \ar[l]_{r} & N.
}
\]
\end{definition}

For circuits made of resistors we take $L = (0,\infty)$, but we shall later
generalize this. In either case, a circuit will be an $L$-graph
with some extra structure:

\begin{definition} \label{def_circuit}
Given a set $L$, a \define{circuit with boundary over $L$} is an $L$-graph 
\[  \xymatrix{
L & E \ar@<2.5pt>[r]^{s} \ar@<-2.5pt>[r]_{t} \ar[l]_{r} & N} \]
together with a subset $\partial N \subseteq N$. We call $\partial N$ the 
\define{boundary} of the circuit, and elements of $\partial N$ \define{terminals}.
\end{definition}


We will later make use of the notion of connectedness in graphs. Recall that
given two nodes $v, w \in N$ of a graph, a \define{path from $v$ to $w$} is a
finite sequence of nodes $v = v_0, v_1, \dots , v_n = w$ and edges $e_1,
\dots , e_n$ such that for each $1 \le i \le n$, either $e_i$ is an edge from
$v_i$ to $v_{i+1}$, or an edge from $v_{i+1}$ to $v_i$. A subset $S$ of the
nodes of a graph is \define{connected} if, for each pair of nodes in $S$,
there is a path from one to the other. A \define{connected component} of a graph
is a maximal connected subset of its nodes.\footnote{In the theory of
directed graphs the qualifier `weakly' is commonly used before the word
`connected' in these two definitions, in distinction from a stronger notion of
connectedness requiring paths to respect edge directions. As we never consider
any other sort of connectedness, we omit this qualifier.}

In the rest of this section we take $L = \R^+ = (0,\infty)$ and fix a circuit over 
this label set.  The edges of this circuit should be thought of as `wires'.  The label 
$r(e) \in (0,\infty)$ stands for the \define{resistance} of the resistor on the wire $e$.   
There will also be a voltage and current on each wire.  In this section, these will
be specified by functions $V \in \R^E$ and $I \in \R^E$.  Here, as customary in
engineering, we use $I$ for `intensity of current', following Amp\`ere.  

\subsection{Ohm's law, Kirchhoff's laws, and the principle of minimum power}

In 1827, Georg Ohm published a book which included a linear relation between 
the voltage and current for circuits made of resistors \cite{O}.  
We thus say that \define{Ohm's law} holds if for each edge $e \in
E$ the voltage and current obey
\[ 
V(e) = r(e) I(e)  \label{ohm}  
\]
where $r(e)$ is the resistance of that edge.
Kirchhoff's laws date to Gustav Kirchhoff in 1845, generalizing Ohm's work. 
We say
\define{Kirchhoff's voltage law} holds if there exists $\phi \in \R^N$ such that
\[
V(e) = \phi(t(e)) - \phi(s(e)).
\]
We call the function $\phi$ a \define{potential}, and think of it as assigning
an electrical potential to each node in the circuit. The voltage then arises as
the differences in potentials between adjacent nodes. 

A \define{boundary potential} is a function in $\R^{\partial N}$, thought
of as specifying potentials on the terminals of a circuit. As our circuits are
`open', with the terminals serving as points of interaction with the external world,
we think of these potentials as variables that are free for us to choose.
We say \define{Kirchhoff's current law} holds if for all nonterminal nodes $n
\in N\setminus \partial N$ we have
\[ 
\sum_{s(e) = n} I(e) = \sum_{t(e) = n} I(e).  \label{kcl}  
\]  
This is an expression of conservation of charge within the circuit; it says that
the total current flowing in or out of any nonterminal node is zero. Even when
Kirchhoff's current law is obeyed, terminals need not be sites of zero net
current; we call the function $\iota \in \R^{\partial N}$ that takes a terminal
to the difference between the outward and inward flowing currents,
\begin{align*}
\iota \maps \partial N &\longrightarrow \R \\
n &\longmapsto \sum_{t(e) = n} I(e) -\sum_{s(e) = n} I(e),
\end{align*}
the \define{boundary current} for $I$.

In Section~\ref{ssec:dirichlet_problem} we show that the above three
principles---Ohm's law, Kirchhoff's voltage law, and Kirchhoff's current
law---imply that choosing a boundary potential determines unique voltage and
current functions on that circuit. 
The so-called `principle of minimum power' gives insight into how this occurs,
by describing the way boundary potentials determine potentials. From this,
Kirchhoff's voltage law then gives rise to a voltage function on the edges, and
Ohm's law gives us a current function too. We shall show, in fact, that a
potential satisfies the principle of minimum power for a given boundary
potential if and only if this current obeys Kirchhoff's current law.

What is this power that we minimize? Power is the rate at which the circuit
dissipates energy. A circuit with current $I$ and voltage $V$ dissipates energy
at a rate equal to
\[
 \sum_{e \in E} I(e)V(e).
\]  
Ohm's law allows us to rewrite $I(e)$ as $V(e)/r(e)$, while Kirchhoff's voltage law gives
us a potential $\phi$ such that $V(e)$ can be written as
$\phi(t(e))-\phi(s(e))$, so for a circuit obeying these two laws the power can
also be expressed in terms of this potential. We thus arrive at a functional
mapping each potential $\phi$ to the power dissipated by the circuit when Ohm's law
and Kirchhoff's voltage law are obeyed for $\phi$. 

\begin{definition}
The \define{extended power functional} $P\maps \R^N \to \R$ of a circuit is
defined by
\[
P(\phi) =\frac{1}{2} \sum_{e \in E} \frac{1}{r(e)}\big(\phi(t(e))-\phi(s(e))\big)^2.
\]
\end{definition}

\noindent
The factor of $\frac{1}{2}$ is inserted to cancel the factor of 2 that appears
when we differentiate this expression.  We call $P$ the \emph{extended} power
functional as it is defined even on potentials that are not
compatible with the three governing laws of electric circuits. We shall later
restrict the domain of this functional so that it is defined precisely on those
potentials that \emph{are} compatible with the governing laws. Note that $P$
does not depend on the directions chosen for the edges of the circuit.

This expression lets us formulate the `principle of minimum power', which gives
us information about the potential $\phi$ given its restriction to the boundary 
$\partial N \subseteq N$. Call a potential $\phi \in \R^N$ an \define{extension} of a
boundary potential $\psi \in \R^{\partial N}$ if $\phi$ is equal to $\psi$ when
restricted to $\R^{\partial N}$---that is, if $\phi|_{\partial N} = \psi$. 

\begin{definition}
We say a potential $\phi \in \R^{N}$ \define{obeys the principle of minimum
power} for a boundary potential $\psi \in \R^{\partial N}$ if $\phi$ minimizes
the extended power functional $P$ subject to the constraint that  $\phi$ is an
extension of $\psi$. 
\end{definition}

As promised, in the presence of Ohm's law and Kirchhoff's voltage law, the
principle of minimum power is equivalent to Kirchhoff's current law.

\begin{proposition} \label{minimum_power_implies_kirchhoff_current}
Let $\phi$ be a potential extending some boundary potential $\psi$. Then $\phi$
obeys the principle of minimum power for $\psi$ if and only if the 
current 
\[  I(e) = \frac1{r(e)}(\phi(t(e))-\phi(s(e))) \] 
obeys Kirchhoff's current law.
\end{proposition}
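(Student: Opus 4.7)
The proof is a standard exercise in constrained quadratic minimization, so the plan is to set it up as such and identify the first-order optimality conditions with Kirchhoff's current law. Since $P$ is a sum of squares with nonnegative coefficients $1/r(e)$, it is a convex quadratic form on $\R^N$. Its restriction to the affine subspace $A_\psi = \{\phi \in \R^N \mid \phi|_{\partial N} = \psi\}$ is therefore also convex, so a point in $A_\psi$ minimizes $P|_{A_\psi}$ if and only if it is a critical point of $P|_{A_\psi}$. The free coordinates on $A_\psi$ are precisely $\phi(n)$ for $n \in N \setminus \partial N$, so the critical point condition reads $\partial P/\partial \phi(n) = 0$ for each nonterminal node $n$.

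The heart of the proof is then a direct computation of this partial derivative. Since $\phi(n)$ appears in the $e$-th summand of $P$ exactly when $n \in \{s(e), t(e)\}$, differentiating under the sum gives
\[
\frac{\partial P}{\partial \phi(n)}(\phi)
= \sum_{e \,:\, t(e)=n} \frac{1}{r(e)}\bigl(\phi(t(e))-\phi(s(e))\bigr)
- \sum_{e \,:\, s(e)=n} \frac{1}{r(e)}\bigl(\phi(t(e))-\phi(s(e))\bigr),
\]
with self-loops (where $s(e)=t(e)=n$) contributing zero to both sums and hence cancelling. Substituting the definition $I(e) = \frac{1}{r(e)}(\phi(t(e))-\phi(s(e)))$ rewrites this as
\[
\frac{\partial P}{\partial \phi(n)}(\phi) = \sum_{t(e)=n} I(e) - \sum_{s(e)=n} I(e).
\]
Thus $\partial P/\partial \phi(n) = 0$ at a nonterminal node $n$ if and only if $\sum_{s(e)=n} I(e) = \sum_{t(e)=n} I(e)$, which is precisely Kirchhoff's current law at $n$.

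Combining the two steps: $\phi \in A_\psi$ obeys the principle of minimum power for $\psi$ iff $\partial P/\partial \phi(n) = 0$ for every $n \in N \setminus \partial N$, iff the current $I$ satisfies Kirchhoff's current law at every nonterminal node. I do not expect any real obstacle; the only points to handle with a little care are the convexity justification for passing from "minimum" to "critical point" (so that both directions of the iff hold, not just the necessity of Kirchhoff's law) and the bookkeeping of sign conventions and self-loops in the derivative computation.
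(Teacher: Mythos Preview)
Your proposal is correct and follows essentially the same approach as the paper: reduce the minimization on the affine subspace $\{\phi : \phi|_{\partial N}=\psi\}$ to the vanishing of the partial derivatives $\partial P/\partial \phi(n)$ at nonterminal nodes (using that $P$ is a nonnegative quadratic), then compute that derivative and recognize it as the net current at $n$. Your treatment is slightly more explicit about convexity and self-loops, but the argument is the same.
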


\begin{proof}
Fixing the potentials at the terminals to be those given by the boundary
potential $\psi$, the power is a nonnegative quadratic function of the
potentials at the nonterminals. This implies that an extension $\phi$ of $\psi$
minimizes $P$ precisely when 
\[ \left. \frac{\partial P(\varphi)}{\partial \varphi(n)}\right|_{\varphi = \phi} = 0 \]
for all nonterminals $n \in N \setminus \partial N$. Note that the
partial derivative of the power with respect to the potential at $n$ is given by 
\begin{align*}
  \frac{\partial P}{\partial \varphi(n)}\bigg|_{\varphi = \phi} 
  &= \sum_{t(e) = n} \frac1{r(e)}\big(\phi(t(e))-\phi(s(e))\big) - \sum_{s(e) =
  n} \frac1{r(e)}\big(\phi(t(e))-\phi(s(e))\big) \\
  &= \sum_{t(e) = n} I(e) - \sum_{s(e) = n} I(e).
\end{align*}
Thus $\phi$ obeys the principle of minimum power for $\psi$ if and only if
\[ \sum_{s(e) = n} I(e) = \sum_{t(e) = n} I(e)\] 
for all $n \in N \setminus \partial N$, and so if and only if Kirchhoff's current law holds.
\end{proof}

\subsection{A Dirichlet problem} \label{ssec:dirichlet_problem}

We are studying circuits as objects that define
relationships between boundary potentials and boundary currents. This
relationship is defined by the stipulation that voltage--current pairs on a
circuit must obey Ohm's law and Kirchhoff's laws---or equivalently, Ohm's law,
Kirchhoff's voltage law, and the principle of minimum power. In this subsection
we show these laws imply that for each boundary potential $\psi$ there exists a
potential $\phi$ extending $\psi$, unique up to what may be interpreted as a
choice of reference potential on each connected component of the circuit.  From
this potential $\phi$ we can then compute the unique voltage, current, and
boundary current functions compatible with the given boundary potential.

Fix a circuit with extended power functional $P\maps \R^N \to \R$. Let
$\Delta\maps \R^N \to \R^N$ be the operator that maps a potential 
$\phi \in \R^N$ to the function from $N$ to $\R$ given by
\[
n \longmapsto \frac{\partial P}{\partial \varphi(n)}\bigg|_{\varphi = \phi} \;.
\]
As we have seen, this function takes potentials to the pointwise currents that
they induce. We have also seen, in Prop.~\ref{minimum_power_implies_kirchhoff_current}, 
that a potential $\phi$ is compatible with the governing laws of circuits if and only if
\begin{equation}
\Delta \phi \big|_{\R^{N\setminus \partial N}} = 0 .\label{dirichlet}
\end{equation}
The operator $\Delta$ acts as a discrete analogue of the Laplacian, so we call this operator the \define{Laplacian}, and say that  equation \eqref{dirichlet} is a version of Laplace's equation. We then say that the problem of finding an extension $\phi$ of some fixed boundary potential $\psi$ that solves this Laplace's equation---or, equivalently, the problem of finding a $\phi$ that obeys the principle of minimum power for $\psi$---is a discrete version of the \define{Dirichlet problem}. 

As we shall see, this version of the Dirichlet problem always has a solution.  However, the solution is not necessarily unique.  If we take a solution $\phi$ and some $\alpha \in \R^N$ that is constant on each connected component and vanishes on the boundary $\partial N$, it is clear that $\phi+\alpha$ is still an extension of $\psi$ and that 
\[
\left.\frac{\partial P(\varphi)}{\partial \varphi(n)}\right|_{\varphi = \phi} = 
\left.\frac{\partial P(\varphi)}{\partial \varphi(n)}\right|_{\varphi = \phi + \alpha},
\] 
so $\phi + \alpha$ is another solution. We say that a connected component of a circuit \define{touches the boundary} if it contains a node in $\partial N$. Note that $\alpha$ as above must vanish on all connected components touching the boundary.

With these preliminaries in hand, we can solve the Dirichlet problem.
\begin{proposition} \label{prop:dirichlet_problem}
Let $\psi \in \R^{\partial N}$ be a boundary potential. Then:
\begin{enumerate}[(i)]
\item There exists a potential $\phi \in \R^N$ obeying the principle of minimum power for $\psi$.
\item If $\phi$ and $\phi'$ both obey the principle of minimum power for
$\psi$, then $P(\phi) = P(\phi')$.  
\item If $\phi$ and $\phi'$ both obey the principle of minimum power for
$\psi$, then $\phi-\phi'$ is constant on every connected component of the graph 
$ \xymatrix{ E \ar@<2.5pt>[r]^{s} \ar@<-2.5pt>[r]_{t} & N}$.
\item There exists a unique potential $f(\psi) \in \R^N$ 
that obeys the principle of minimum power for $\psi$ and vanishes on every 
connected component of  
$ \xymatrix{ E \ar@<2.5pt>[r]^{s} \ar@<-2.5pt>[r]_{t} & N}$
not touching the boundary.
\item The potential $f(\psi)$ depends linearly on $\psi$.
\end{enumerate}
\end{proposition}

\begin{proof} We prove this more generally in Thm.~\ref{thm:dirichlet_problem}.  
\end{proof}

\subsection{The power functional}

We have seen that boundary potentials determine, essentially uniquely, the value
of all the electric properties across the entire circuit. But from the
compositional perspective, this internal structure is irrelevant: we can only
access the circuit at its terminals, and hence only need concern ourselves with
what can be witnessed there: the relationship between boundary potentials and
boundary currents. In this section we 
state the precise way in which boundary currents depend on boundary potentials.
In particular, we shall show that the relationship is completely captured by the
circuit's `power functional': the function taking any boundary potential to the 
minimum possible power used by any extension of that boundary potential. 
Furthermore, circuits with different power functionals have different relations between
boundary potentials and boundary currents.  So, two circuits are equivalent, as
far as what can be observed `from outside', if they have the same
power functional.

\begin{definition}
The \define{power functional} $Q \maps \R^{\partial N} \to \R$ of a circuit with extended power functional $P$ is given by
\[
 Q(\psi) = \min_{\phi|_{\R^{\partial N}} = \psi } P(\phi).
\]
\end{definition}

Prop.~\ref{prop:dirichlet_problem}(i) shows that the minimum above exists, so the power
functional is well defined.  Prop.~\ref{prop:dirichlet_problem}(iv) implies that 
$Q(\psi) = P(f(\psi))$, where $f$ is the map sending $\psi$ to the unique potential obeying
the principle of minimum power for $\psi$.   Prop.~\ref{prop:dirichlet_problem}(v)
says that $f$ is linear; since $P$ is a nonnegative quadratic form, it follows that $Q$ is as well.   
We call $Q$ the `power functional' because $Q(\psi)$ equals 
$\frac{1}{2}$ times the power dissipated by the circuit when the boundary voltage is $\psi$. 

Since $Q$ is a smooth real-valued function on $\R^{\partial N}$, its
differential $d Q$ at any given point $\psi \in \R^{\partial N}$ defines an
element of the dual space $(\R^{\partial N})^\ast$, which we denote by $d
Q_\psi$.  In fact, this element is equal to the boundary current $\iota$
corresponding to the boundary potential $\psi$:

\begin{proposition} \label{boundary_current_determines_boundary_voltage}
Suppose $\psi \in \R^{\partial N}$.  Suppose $\phi$ is any extension of $\psi$ minimizing the power. Then $dQ_\psi \in (\R^{\partial N})^\ast \cong \R^{\partial N}$ gives the boundary current of the current induced by the potential $\phi$.
\end{proposition}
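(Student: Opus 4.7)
The plan is to apply the chain rule to $Q(\psi) = P(\phi^*(\psi))$, where $\phi^*(\psi)$ denotes the canonical extension of $\psi$ that minimizes $P$ and vanishes on connected components not touching the boundary (whose existence and uniqueness is given by Proposition \ref{dirichlet_problem}). By Proposition \ref{dirichlet_problem_3: linearity}, $\phi^*$ depends linearly on $\psi$, and since $P$ is a quadratic form in $\phi$, the composite $Q$ is a quadratic form in $\psi$, hence certainly smooth; in particular $dQ_\psi$ exists and may be computed via the chain rule.

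First I would compute, for each terminal $n \in \partial N$, the partial derivative
\[
\frac{\partial Q}{\partial \psi(n)}(\psi) \;=\; \sum_{m \in N} \frac{\partial P}{\partial \varphi(m)}\bigg|_{\varphi = \phi^*(\psi)} \cdot \frac{\partial \phi^*(m)}{\partial \psi(n)}.
\]
For nonterminal $m \in N \setminus \partial N$, the first-order condition used already in the proof of Proposition \ref{minimum_power_implies_kirchhoff_current} forces $\partial P / \partial \varphi(m)|_{\phi^*} = 0$, killing those terms. For terminal $m \in \partial N$, the constraint $\phi^*|_{\partial N} = \psi$ gives $\partial \phi^*(m) / \partial \psi(n) = \delta_{mn}$. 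Only the $m = n$ term survives, and using the partial derivative calculation from the proof of Proposition \ref{minimum_power_implies_kirchhoff_current} together with the definition of the boundary current,
\[
\frac{\partial Q}{\partial \psi(n)}(\psi) \;=\; \frac{\partial P}{\partial \varphi(n)}\bigg|_{\varphi = \phi^*(\psi)} \;=\; \sum_{t(e)=n} I(e) - \sum_{s(e)=n} I(e) \;=\; \iota(n),
\]
which is exactly the assertion that $dQ_\psi$, viewed as an element of $\R^{\partial N}$ via the standard inner product, equals the boundary current $\iota$.

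It remains to check that this does not depend on which minimizer $\phi$ was chosen, so that the statement applies to \emph{any} power-minimizing extension, not just the canonical $\phi^*$. By Proposition \ref{dirichlet_problem_2}, any two minimizers differ by some $\alpha \in \R^N$ that is constant on each connected component and vanishes on every component touching the boundary. Since any edge $e$ incident to a terminal $n \in \partial N$ lies in a component touching the boundary, $\alpha(t(e)) - \alpha(s(e)) = 0$ on such edges, so the induced current $I(e) = \frac{1}{r(e)}(\phi(t(e)) - \phi(s(e)))$ at edges adjacent to terminals is independent of the choice of minimizer, and consequently so is $\iota(n)$ for each $n \in \partial N$.

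The proof is essentially an envelope-theorem argument, and no step presents a serious obstacle given the apparatus already in place; the only mild subtlety is the bookkeeping between terminal and nonterminal partial derivatives, which is what makes the minimization condition at interior nodes exactly cancel the "indirect" contributions to $dQ$.
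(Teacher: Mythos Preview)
Your proof is correct and follows essentially the same approach as the paper's: both use the linear dependence of the canonical minimizer on $\psi$ (Proposition~\ref{dirichlet_problem_3: linearity}), the vanishing of $\partial P/\partial\varphi(m)$ at nonterminals, and the identification of $\partial P/\partial\varphi(n)$ with net current from Proposition~\ref{minimum_power_implies_kirchhoff_current}; the paper merely packages the chain rule as a direct computation of the directional derivative $dQ_\psi(\psi') = \frac{d}{d\lambda}P(f(\psi+\lambda\psi'))|_{\lambda=0}$ rather than in partial-derivative form, and absorbs the Kirchhoff cancellation into the observation that the extension-by-zero $\bar\iota$ of $\iota$ satisfies $\bar\iota(n)=0$ for $n\notin\partial N$.
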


\begin{proof}
Note first that while there may be several choices of $\phi$ minimizing the
power subject to the constraint that $\phi|_{\R^{\partial N}} = \psi$,
Prop.~\ref{prop:dirichlet_problem} says that there is a unique choice $\phi = f(\psi)$ 
vanishing on all components not toucing the boundary of $\Gamma$, and that
\[
f\maps \R^{\partial N} \longrightarrow \R^N
\]
is linear.   We thus have
\[
Q(\psi) = P(f(\psi)).
\]
Write $\overline\iota\maps N \to \R$ for the extension of $\iota\maps \partial N \to \R$ to $N$ taking value $0$ on $N \setminus \partial N$. Given any $\psi' \in \R^{\partial N}$, we thus have
\begin{align*}
dQ_\psi(\psi') &= \frac{d}{d\lambda}Q(\psi +\lambda\psi') \bigg|_{\lambda=0} \\
&= \frac{d}{d\lambda}P(f(\psi+\lambda\psi'))\bigg|_{\lambda=0} \\
&= \frac{1}{2} \frac{d}{d\lambda}\sum_{e \in E} \frac1{r(e)}\bigg(f(\psi+\lambda\psi')(t(e))-f(\psi+\lambda\psi')(s(e))\bigg)^2 \bigg|_{\lambda=0} \\
&= \sum_{e \in E} \frac1{r(e)}\big((f(\psi)(t(e))- f(\psi)(s(e))\big)\, \big(f(\psi')(t(e))- f(\psi')(s(e))\big) \\
&= \sum_{e \in E} I(e) \, \big(f(\psi')(t(e))- f(\psi')(s(e))\big) \\
&= \sum_{n \in N}\left(\sum_{t(e) = n} I(e) - \sum_{s(e) = n} I(e)\right) \, f(\psi')(n) \\
&= \sum_{n \in N}\overline \iota(n) \, f(\psi')(n) \\
&= \sum_{n \in \partial N}\iota(n) \, \psi'(n).
\end{align*}
This shows that $dQ_\psi \in (\R^{\partial N})^\ast$ maps to $\iota \in
\R^{\partial N}$ under the canonical isomorphism $(\R^{\partial N})^\ast \cong
\R^{\partial N}$, as claimed.  Note that this calculation explains why we
inserted a factor of $\frac{1}{2}$ in the definition of $P$: it cancels the
factor of $2$ obtained from differentiating a square. 
\end{proof}

Note this only depends on $Q$, which makes no mention of the potentials at
nonterminals. This is fundamental: the way power depends on boundary potentials
completely characterizes the way boundary currents depend on boundary
potentials. In particular, in Part \ref{part:blackbox} we shall see that this
allows us to define a composition rule for behaviors of circuits.

To demonstrate these notions, we give a basic example of equivalent circuits.

\begin{example}[Resistors in series] \label{resistors_in_series}
Resistors are said to be placed in \define{series} if they are placed end to end or, more
precisely, if they form a path with no self-intersections. It is well known that
resistors in series are equivalent to a single resistor with resistance equal to
the sum of their resistances. To prove this, consider the following circuit
comprising two resistors in series, with input $A$ and output $C$:
\[
  \begin{tikzpicture}[circuit ee IEC, set resistor graphic=var resistor IEC graphic]
    \node[contact] (I1) at (0,0) [label=left:$A$] {};
    \node[circle, minimum width = 3pt, inner sep = 0pt, fill=black] (int) at (3,0) [label=above:$B$] {};
    \node[contact] (O1) at (6,0) [label=right:$C$] {};
    \draw (I1) 	to [resistor] node [label={[label distance=3pt]90:{$r_{AB}$}}] {} (int)
    to [resistor] node [label={[label distance=3pt]90:{$r_{BC}$}}] {} (O1);
  \end{tikzpicture}
\]
Now, the extended power functional $P\maps \R^{\{A,B,C\}} \to \R$ for this circuit is
\[
P(\phi) = \frac12\left(\frac1{r_{AB}}\big(\phi(A)-\phi(B)\big)^2 +
\frac1{r_{BC}}\big(\phi(B)-\phi(C)\big)^2\right),
\]
while the power functional $Q\maps \R^{\{A,C\}} \to \R$ is given by minimization
over values of $\phi(B) = x$:
\[
Q(\psi) = \min_{x \in \R} \frac12 \left(\frac1{r_{AB}}\big(\psi(A)-x\big)^2 + \frac1{r_{BC}}\big(x-\psi(C)\big)^2 \right). 
\]
Differentiating with respect to $x$, we see that this minimum occurs when
\[
\frac1{r_{AB}}\big(x-\psi(A)\big) + \frac1{r_{BC}}\big(x-\psi(C)\big) = 0,
\]
and hence when $x$ is the $r$-weighted average of $\psi(A)$ and $\psi(C)$:
\[
x = \frac{r_{BC}\psi(A) + r_{AB}\psi(C)}{r_{BC}+ r_{AB}}.
\]
Substituting this value for $x$ into the expression for $Q$ above and simplifying gives
\[
Q(\psi) = \frac12\cdot\frac1{r_{AB}+r_{BC}}\big(\psi(A)-\psi(C)\big)^2. 
\]
This is also the power functional of the circuit
\[
\begin{tikzpicture}[circuit ee IEC, set resistor graphic=var resistor IEC graphic]
\node[contact] (I1) at (0,0) [label=left:$A$] {};
\node[contact] (O1) at (3,0) [label=right:$C$] {};
\draw (I1) 	to [resistor] node [label={[label distance=3pt]90:{$r_{AB}+r_{BC}$}}] {} (O1);
\end{tikzpicture}
\]
and so these two circuits are equivalent.
\end{example}

\subsection{Dirichlet forms}

In the previous subsection we claimed that power functionals are quadratic forms
on the boundary of the circuit whose behavior they represent. They comprise, in
fact, precisely those quadratic forms known as Dirichlet forms \cite{Fukushima,MR, Sabot1997,Sabot2004}.  Dirichlet forms are usually defined over $\R$, but we shall find 
it useful to work over any field with a notion of positive elements.

\begin{definition} \label{def:positive}
  We define a \define{field with positive elements} $(\F,\F^+)$ to be a field
  $\F$ equipped with a subset $\F^+$ that contains $c^2$ for every nonzero $c
  \in \F$, and is closed under addition, multiplication, and division.
\end{definition}

For example, $\R^+=(0,\infty)$ is a set of positive elements for $\R$.   Since $\F^+$ is closed under squaring we must have $1 \in \F^+$.   Since $\F^+$ is closed 
under division we must have $0 \notin \F^+$.  Since $\F^+$ is closed under addition
it follows that $-1 \notin \F^+$, and any field with positive elements must have characteristic 
zero.  We say $c$ is \define{nonnegative}, and write $c \ge 0$, if $c \in \F^+ \cup \{0\}$.   

\begin{definition} Given a finite set $S$ and a field with positive elements $(\F,\F^+)$, 
a \define{Dirichlet form} over $(\F,\F^+)$ on $S$ is a quadratic form $Q\maps \F^S \to \F$ 
given by the formula 
  \[ 
    Q(\psi) = \sum_{i,j \in S} c_{ij} (\psi_i - \psi_j)^2,
  \]
for some choice of nonnegative elements $c_{i j} \in \F$, where we have written
$\psi_i=\psi(i) \in \F$.   
\end{definition}

Every Dirichlet form is nonnegative: $Q(\psi)
\ge 0$ for all $\psi \in \F^S$.  However, not every nonnegative quadratic
form is a Dirichlet form.  For example, taking $S = \{1,2\}$, the quadratic
form $Q(\psi) = (\psi_1 + \psi_2)^2$ is nonnegative but not a Dirichlet form.    

Real Dirichlet forms are precisely the power functionals of circuits:

\begin{proposition} \label{prop.power_dirichlet}
  A function $Q \maps \R^{\partial N} \to \R$ is the power functional for some 
  circuit if and only if it is a Dirichlet form over $(\R,\R^+)$. 
\end{proposition}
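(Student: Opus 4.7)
The forward implication is given by the corollary immediately preceding the statement, so the only work is to construct, given a Dirichlet form $Q(\psi) = \sum_{i,j \in S} c_{ij}(\psi_i - \psi_j)^2$ on $\R^S$, a circuit whose power functional is $Q$. The plan is to build a circuit in which there are no interior nodes at all, so that the minimization step defining the power functional from the extended power functional is trivial and $P$ and $Q$ coincide on the nose.

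Concretely, I would take $N = S$, $X = Y = S$, and let $i, o \maps S \to N$ both be the identity map, so that $\partial N = i(X) \cup o(Y) = S$. For edges, I would choose an arbitrary total order on $S$, and for each unordered pair $\{u, v\}$ with $u < v$ and $c_{uv} > 0$ include a single edge from $u$ to $v$ with resistance $r_{uv}$ chosen to calibrate the coefficients correctly; a short algebraic check (equating $\tfrac{1}{2r_{uv}}(\phi_u - \phi_v)^2$ in $P$ with the total contribution $2 c_{uv}(\phi_u - \phi_v)^2$ of the ordered pairs $(u,v)$ and $(v,u)$ in $Q$) determines the exact value, namely $r_{uv} = \tfrac{1}{4 c_{uv}}$. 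The direction assigned to each edge is immaterial because the contribution $(\phi(t(e)) - \phi(s(e)))^2$ is symmetric in source and target.

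It then remains only to verify that the power functional of this circuit equals $Q$. Because $\partial N = N$, the only $\phi \in \R^N$ extending a given $\psi \in \R^{\partial N}$ is $\psi$ itself, so the minimization in the definition of the power functional is vacuous and the power functional coincides with the extended power functional $P$. By construction, $P(\psi)$ matches $Q(\psi)$ term by term.

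I do not expect any serious obstacle here: the construction is essentially algebraic bookkeeping, and the subtlety already handled in the corollary (namely, that Dirichlet-ness survives minimization over interior vertices) is exactly what allows us to avoid dealing with interior vertices here by choosing a representative circuit with none. The only thing one has to be a little careful about is to absorb the factor of $\tfrac{1}{2}$ in the definition of $P$ and the symmetrization of $c_{ij}$ correctly when fixing the resistances.
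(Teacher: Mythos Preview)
Your proposal is correct and follows essentially the same construction as the paper: build a circuit with node set $N = \partial N$ (so there are no interior nodes and the minimization is vacuous) and place one edge between each pair $\{u,v\}$ with nonzero coefficient, choosing the resistance to match the given Dirichlet form. The paper states the resistance as $\tfrac{1}{2c_{ij}}$ rather than your $\tfrac{1}{4c_{uv}}$; your value is the one consistent with the paper's conventions (symmetric $c_{ij}$ and the factor $\tfrac{1}{2}$ in $P$), so you have simply been more careful with the bookkeeping.
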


\begin{proof}  This is an expression of the `star-mesh transform', a well-known fact of
electrical engineering stating that every circuit of linear resistors is
equivalent to some complete graph of resistors between its terminals. For more
details see \cite{vLO}.  \end{proof}

To summarize, our work so far has shown the existence of a surjective function
\[
  \bigg\{\begin{array}{c} \mbox{circuits of linear resistors} \\ \mbox{ with
    boundary $\partial N$} \end{array} \bigg\} \longrightarrow \bigg\{
    \mbox{Dirichlet forms on $\partial N$}\bigg\}
\]
mapping two circuits to the same Dirichlet form if and only if they have the
same external behavior.  In the next section we discuss how these ideas extend
to circuits comprising inductors and capacitors too.

\section{Passive linear circuits} \label{sec:plcs}
The intuition gleaned from the study of resistors carries over to inductors and
capacitors, and provides a framework for studying what are known as passive
linear circuits. To understand inductors and capacitors in this way, however, we
must introduce a notion of time dependence and subsequently the Laplace
transform, which allows us to work in the so-called frequency domain. Here, like
resistors, inductors and capacitors simply impose a relationship of
proportionality between the voltages and currents that run across them. The
constant of proportionality is known as the impedance of the component.

As for resistors, the interconnection of such components may be understood, at
least formally, as a minimization of some quantity, and we may represent the
behaviors of this class of circuits with a more general idea of Dirichlet form.
We conclude this section by noting an obstruction to building a composition rule
for Dirichlet forms, motivating our work in Part \ref{part:categories}.

\subsection{Inductors and capacitors}
\label{ssec:freqdomain}

In broadening the class of electrical circuit components under examination, we
find ourselves dealing with components whose behaviors depend on the rates of
change of current and voltage with respect to time. We thus now consider
time-varying voltages $v \maps [0,\infty) \to \R$ and currents $i \maps
[0,\infty) \to \R$, where $t \in [0,\infty)$ is a real variable representing
time. For mathematical reasons, we
restrict these voltages and currents to only those with (i) zero initial
conditions (that is, $f(0) = 0$) and (ii) Laplace transform lying in the field
\[
  \R(s) = \left\{ Z(s) = \displaystyle{\frac{P(s)}{Q(s)}} \,\Big\vert\, P, Q \mbox{
  polynomials over $\R$ in $s$}, \, Q \ne 0 \right\}
\]
of real rational functions of one variable.  While it is possible that
physical voltages and currents might vary with time in a more general way, we
restrict to these cases as the rational functions are, crucially, well behaved
enough to form a field, and yet still general enough to provide arbitrarily
close approximations to currents and voltages found in standard applications.

An inductor is a two-terminal circuit across which the voltage is
proportional to the rate of change of the current. By convention we draw this as
follows, with the inductance $L$ the constant of proportionality:
\[
  \begin{tikzpicture}[circuit ee IEC]
    \node[contact] (I1) at (0,0) {};
    \node[contact] (I2) at (1.83,0) {};
    \draw (I1) 	to [inductor] node [label={[label distance=2pt]{$L$}}]
    {} (I2);
  \end{tikzpicture}
\]
Writing $v_L(t)$ and $i_L(t)$ for the voltage and current over time $t$ across
this component respectively, and using a dot to denote the derivative with
respect to time $t$, we thus have the relationship 
\[
  v_L(t) = L\, \dot{i}_L(t).
\]
Switching the roles of current and voltage, a capacitor is a two-terminal
circuit across which the current is proportional to the rate of change of the
voltage. We draw this as follows, with the capacitance $C$ the constant of
proportionality:
\[
  \begin{tikzpicture}[circuit ee IEC]
    \node[contact] (I1) at (0,0) {};
    \node[contact] (I2) at (1.83,0) {};
    \draw (I1) 	to [capacitor] node [label={[label distance=5pt]{$C$}}]
    {} (I2);
  \end{tikzpicture}
\]
Writing $v_C(t)$, $i_C(t)$ for the voltage and current across the capacitor,
this gives the equation
\[
  i_C(t) = C\, \dot{v}_C(t).
\]
We assume here that inductances $L$ and capacitances $C$ are positive real numbers.

Although inductors and capacitors impose a linear relationship if we involve the
derivatives of current and voltage, to mimic the above work on resistors we wish
to have a constant of proportionality between functions representing the current
and voltage themselves. Various integral transforms perform just this role; electrical
engineers typically use the Laplace transform. This lets us write a function of time $t$ instead as a function of frequencies $s$, and in doing so turns differentiation with respect to $t$ into multiplication by $s$, and integration with respect to $t$ into
division by $s$.  

In detail, given a function $f(t)\maps [0, \infty) \to \R$, we define the
\define{Laplace transform} of $f$
\[
  \mathfrak{L}\{f\}(s) = \int_{0}^\infty f(t) e^{-st} dt.
\]
We also use the notation $\mathfrak{L}\{f\}(s) = F(s)$, denoting the Laplace
transform of a function in upper case, and refer to the Laplace transforms as
lying in the \define{frequency domain} or \define{$s$-domain}. For us, the three
crucial properties of the Laplace transform are then: 
\begin{enumerate}[(i)]
  \item linearity: $\mathfrak{L}\{af+bg\}(s) = aF(s)+bG(s)$ for $a,b\in \R$;
  \item differentiation: $\mathfrak{L}\{\dot{f}\}(s) = s F(s) - f(0)$;
  \item integration: if $g(t) = \int_0^t f(\tau)d\tau$ then 
 $G(s) = \frac{1}{s} F(s)$.
\end{enumerate}
Writing $V(s)$ and $I(s)$ for the Laplace transform of the voltage $v(t)$ and
current $i(t)$ across a component respectively, and recalling that by assumption
$v(t) = i(t) = 0$ for $t \le 0$, the $s$-domain behaviors of components become,
for a resistor of resistance $R$:
\[
  V(s) = RI(s),
\]
for an inductor of inductance $L$:
\[
  V(s) = sLI(s),
\]
and for a capacitor of capacitance $C$:
\[
  V(s) = \frac1{sC} I(s). 
\]

Note that for each component the voltage equals the current times a rational function of
the real variable $s$, called the \define{impedance} and in general denoted by $Z$.
Note also that in each case the impedance lies in the set
\[         
 \R(s)^+ = \{ Z \in \R(s) : Z \ne 0 \textrm{ and } s > 0 \implies  Z(s) \ge 0 \} . 
\]
It is easy to check that $\R(s)^+$ forms a set 
of positive elements for the field $\R(s)$ according to Definition \ref{def:positive}.
We warn the reader that this concept of positivity differs from the one more commonly
used in circuit theory \cite{Brune}, which gives a set that is not closed under multiplication.

The examples of resistances in $\R$ and impedances in $\R(s)$ 
motivate our algebraic approach to passive linear circuits.  In what follows, we fix an
arbitrary field with positive elements $(\F,\F^+)$, and consider circuits
made from components obeying this generalization of Ohm's law:
\[
  V=ZI
\]
where $I \in \F$ is the current, $V \in \F$ is the voltage, and $Z \in
\F^+$ is the impedance of the component. 

\begin{definition} \label{def_circuit_2}
A \define{passive linear circuit} is a circuit with over $\F^+$: that is, a diagram
\[  \xymatrix{
\F^+ & E \ar@<2.5pt>[r]^{s} \ar@<-2.5pt>[r]_{t} \ar[l]_{Z} & N.} \]
We call $Z(e)$ the \define{impedance} of the edge $e \in E$, and call  
$ \xymatrix{ E \ar@<2.5pt>[r]^{s} \ar@<-2.5pt>[r]_{t} & N}$
the circuit's \define{underlying graph}.  We often abbreviate a passive linear
circuit as $(N,E,s,t,Z)$.   
\end{definition}

\begin{definition} 
A \define{passive linear circuit with boundary} is a passive linear circuit 
together with a subset $\partial N \subseteq N$ called the \define{boundary}.
\end{definition}

\subsection{The power functional as a Dirichlet form} \label{sec:generalized}

Generalizing from circuits of linear resistors to passive linear circuits is mainly a matter of formally replacing resistances by impedances.   However, we need a purely algebraic formulation of the principle of minimum power.   In what follows we fix a field with positive elements $(\F,\F^+)$ and a passive linear circuit  $(N,E,s,t,Z)$.   We define the \define{extended power functional} 
$P\maps \F^N \to \F$ by 
\[
  P(\phi) = \frac{1}{2} \sum_{e \in E} \frac1{Z(e)}\big(\phi(t(e))-\phi(s(e))\big)^2.
\]
Note that $2 \in \F^+$, so dividing by 2 is permitted, and $P$ is a Dirichlet
form.  

Although it is not clear what it means to minimize over the field $\F$, we
can use formal derivatives to formulate a version of the principle of minimum
power.   This is actually a `variational principle', saying the derivative
of the power functional vanishes with respect to certain variations in the
potential.

In detail, the extended power functional can be reinterpreted as giving an element 
$P(\varphi)$ of the polynomial ring $\F[\{\varphi(n)\}_{n \in N}]$ generated by formal
variables $\varphi(n)$ corresponding to potentials at the nodes $n \in N$. We
may thus take formal derivatives of the extended power functional with respect
to the $\varphi(n)$.    For any set $R \subseteq N$, we say $\phi \in \F^N$ is
an \define{extension} of $\psi \in \F^R$ if $\phi$ restricted to $R$ equals
$\psi$.  We say such an extension $\phi$ \define{obeys the principle of minimum 
power} for $\psi$ if 
\[
    \frac{\partial P}{\partial \varphi(n)}\bigg\vert_{\varphi = \phi} = 0
  \]
for all $n \in N \setminus R$.   We are especially interested in the case where we
have a passive linear circuit with boundary $\partial N \subseteq N$ and $R = \partial N$.
However, we need other cases too.

We now generalize Prop.~\ref{prop:dirichlet_problem} to any field with
positive elements, strengthen it, and provide a proof.  In this stronger version
we show that minimizing the extended power functional over all extensions of a
given $\psi \in \F^R$ gives a Dirichlet form $Q \maps \F^R \to \F$, the `power 
functional'.  The key point is that when power is minimized, the potential at every 
node can be chosen to depend linearly on $\psi$, in fact forming a weighted average, 
and that substituting these weighted averages into a Dirichlet form gives another Dirichlet 
form.

\begin{theorem} \label{thm:dirichlet_problem}
Given a passive linear circuit $(N,E,s,t,Z)$, suppose 
$R \subseteq N$ and $\psi \in \F^R$.  Then:
\begin{enumerate}[(i)]
\item There exists a potential $\phi \in \F^N$ obeying the principle of minimum power for $\psi$.
\item If $\phi$ and $\phi'$ both obey the principle of minimum power for
$\psi$, then $P(\phi) = P(\phi')$.  
\item If $\phi$ and $\phi'$ both obey the principle of minimum power for
$\psi$, then $\phi-\phi'$ is constant on every connected component of the circuit's underlying
graph.
\item There exists a unique potential $f(\psi) \in \F^N$ 
that obeys the principle of minimum power for $\psi$ and vanishes on every 
connected component not containing a node in $R$.
\item The potential $f(\psi)$ depends linearly on $\psi$.
\item If $P \maps \F^N \to \F$ is the extended power functional, then the \define{power
functional} $Q \maps \F^R \to \F$ given by
\[    Q(\psi) = P(f(\psi)) \]
is a Dirichlet form on $R$.
\end{enumerate}
\end{theorem}

\begin{proof}  

(i) The extended power functional $P$ is a Dirichlet form on $\F^N$.   Writing 
$\F^N = \F^{N \setminus R} \oplus \F^R$, the extensions of
$\psi \in \F^R$ form the affine subspace 
\[              S = \{ (\theta, \psi) \mid \theta \in \F^{N \setminus R} \} \] 
of $\F^N$.  It suffices
to show that  the differential of $P|_S \maps S \to \F$ vanishes at some point 
$\phi \in S$, since then $\phi$ obeys the principle of minimum power for $\psi$.

Write $s = \lvert N\setminus R\rvert$, $r = \lvert R \rvert$.  We
can express the quadratic form $P$ in terms of a symmetric matrix, which we
write in block form as
\[   \left( \begin{array}{cc} A & B^\top \\ B & C \end{array} \right) \]
where $A$ is a symmetric $s \times s$ matrix, $B$ is an $r \times s$ matrix, and
$C$ is a symmetric $r \times r$ matrix. This gives
\[   P(\theta,\xi) =  A \theta \cdot \theta + 2 B^\top \xi \cdot  \theta + \xi
\cdot C\xi    \]
for all $\theta \in \F^{N \setminus R}$ and $\xi \in \F^R$, where the dot products
are defined as usual on $\F^{N \setminus R}$ and $\F^R$.

For any $\phi = (\theta,\psi) \in S$, the
restriction of $P$ to $S$ then has the following differential at $\phi$:
\[     (dP|_S)_{(\theta,\psi)} = 2 (A \theta + B^\top \psi). \]
We must show that for some choice of $\theta$ this 
differential vanishes. Thus, it suffices to show that 
$\mathrm{im} B^\top \subseteq \mathrm{im} A$.  This is equivalent to  
$\mathrm{ker}A^\top \subseteq \mathrm{ker}B$, but $A = A^\top$, so it suffices to show that
\[         A \theta = 0 \implies B \theta = 0  \]
for all $\theta \in \F^{N\setminus R}$.

Proceeding by contradiction, suppose this is false.  Thus there exists $\theta$ with 
$A\theta = 0$ but $\xi \cdot B \theta \ne 0$ for some $\xi \in \F^R$.   For this
choice of $\theta$ and $\xi$
we have
\[   P(\theta,\xi) = 2 \xi \cdot B\theta  + \xi \cdot C \xi .\]
Since $\xi \cdot B\theta \ne 0$ we can choose $c \in \F$ such that 
\[         P(c\theta,\xi) = 2 c \xi \cdot B\theta + \xi \cdot C \xi = -1  \]
(simply solve the second equation for $c$).  This contradicts the fact that $P$ is
nonnegative, since we cannot have $-1 \ge 0$ in a field with positive elements.

(ii) This follows from the formal version of the multivariable Taylor theorem for
polynomial rings over a field of characteristic zero. Let $\phi, \phi' \in \F^N$ 
be extensions of $\psi$ obeying the principle of minimum power and note that
 $dP_{\phi}(\phi - \phi')=0$, since for all $n \in \partial N$ we have
$\phi(n) -\phi'(n) =0$, and for all $n \in N \setminus R$ we have
  \[
    \frac{\partial P}{\partial \varphi(n)}\bigg\vert_{\varphi = \phi}=0. 
  \]
We may take the Taylor expansion of $P$ around $\phi$ and evaluate at
$\phi'$. As $P$ is a quadratic form, this gives
\begin{align*}
    P(\phi') &=
    P(\phi)+dP_{\phi}(\phi'-\phi)+P(\phi'-\phi)
    \\
    & = P(\phi)+P(\phi'-\phi).
  \end{align*}
Similarly, we arrive at  
\[
    P(\phi)= P(\phi')+P(\phi-\phi').
\]
  But again as $P$ is a quadratic form, we then see that 
\[
    P(\phi')-P(\phi) = P(\phi'-\phi) =
    P(\phi-\phi') = P(\phi)-P(\phi').
\]
  This implies that $P(\phi')-P(\phi) = 0$, as required.
  
(iii) Suppose that $\phi$ and $\phi'$ both obey the principle of minimum power for
$\psi$.   Define $\alpha = \phi' - \phi$.    Then for each $\lambda \in \F$, $\phi + \lambda \alpha$ is an extension of $\psi$, and it obeys the principle of minimum power for $\psi$ when
$\lambda = 0$ and $\lambda = 1$.   Thus $f(\lambda) = P(\phi + \lambda \alpha)$ 
is a quadratic function whose derivative with respect to $\lambda$ 
vanishes at both $\lambda = 0$ and $\lambda = 1$, so it is constant.     On the other 
hand, when expanding
\[
f(\lambda) = \frac12\sum_{e \in E}
\frac1{Z(e)}\big((\phi+\lambda\alpha)(t(e))-(\phi+\lambda\alpha)(s(e))\big)^2,
\]
as a Taylor series, the coefficient of $\lambda^2$ is
\[
0 = \frac{1}{2}\sum_{e \in E} \frac1{Z(e)}\big(\alpha(t(e))-\alpha(s(e))\big)^2.
\]
Since $Z(e) \in \F^+$ for each edge $e$, the sum can only vanish if $\alpha$ is constant
on each connected component of the circuit's underlying graph.

(iv) By (i) there exists a potential $\phi$ that obeys the principle of minimum power
for $\psi$.    To obtain a potential $\phi'$ that obeys this principle and also vanishes
on every connected component not containing any node in $R$, simply set $\phi'(n) = 
\phi(n)$ when $n$ is in a connected component that contains a node in $R$, 
and $\phi(n)=0$ otherwise.  Thus $\phi'$ is
an extension of $\psi$, so we need only show that $\phi'$ obeys the principle
of minimum power for $\psi$.   For this we must show that any  
$n \in N \setminus R$ we have
\[ \left. \frac{\partial P(\varphi)}{\partial \varphi(n)}\right|_{\varphi = \phi'} = 0. \]

Note that
\begin{equation}
\label{eq.1}
  \frac{\partial P}{\partial \varphi(n)}\bigg|_{\varphi = \phi'} 
  = \sum_{t(e) = n} \frac1{Z(e)}\big(\phi'(t(e))-\phi'(s(e))\big) - \sum_{s(e) =
  n} \frac1{Z(e)}\big(\phi'(t(e))-\phi'(s(e))\big) 
\end{equation}
and since $\phi$ obeys the principle of minimum power for $\psi$ we have
\begin{equation}
\label{eq.2}
0 =  \frac{\partial P}{\partial \varphi(n)}\bigg|_{\varphi = \phi} 
  = \sum_{t(e) = n} \frac1{Z(e)}\big(\phi(t(e))-\phi(s(e))\big) - \sum_{s(e) =
  n} \frac1{Z(e)}\big(\phi(t(e))-\phi(s(e))\big). 
\end{equation}
If $n$ is in a connected component that contains a node in $R$, the right-hand sides of 
(\ref{eq.1}) and (\ref{eq.2}) are equal, so 
\[      \frac{\partial P}{\partial \varphi(n)}\bigg|_{\varphi = \phi'} = 0.\]
If $n$ is in a connected component that contains no nodes in $R$, 
each term in the right-hand side of (\ref{eq.1}) vanishes, so we obtain the same result.  

This shows existence of a potential $\phi$ that obeys the principle of minimum power
for $\psi$ and vanishes on every connected component not containing a node in $R$.
For uniqueness, note that given two such $\phi$ their difference is constant on each
connected component by (iii).  This constant must be zero for all the connected components
not containing a node in $R$, but also for those that do, since
we require $\phi|_R = \psi$.

(v) Fix $\psi, \psi' \in \F^R$, and suppose $\phi, \phi' \in \F^N$ obey the principle of minimum power for $\psi,\psi'$, respectively, and that both $\phi$ and $\phi'$ vanish
on every connected component containing no nodes in $R$.  For any $\lambda 
\in \F$, the function $\phi + \lambda \phi'$ vanishes on every connected 
component containing no nodes in $R$.   So, to prove the linearity
of $f$, it suffices to show that $\phi + \lambda \phi'$ obeys the principle of minimum power for
$\psi + \lambda \psi'$.

Since $P$ is a quadratic form, there is a linear operator $T \maps \F^N \to 
\F^{N \setminus R}$ that maps any potential $\zeta \in \F^N$ 
to the function from $N \setminus R$ to $\F$ given by
\[
n \longmapsto \frac{\partial P}{\partial \varphi(n)}\bigg|_{\varphi = \zeta} \;. \]
A potential $\zeta \in \F^N$ obeys the principle of minimum power for $\mu \in \F^R$
if and only if $\zeta|_R = \mu$ and $T\zeta = 0$.  Since both these equations
are linear, and $\phi, \phi' \in \R^N$ obey the principle of minimum power for $\psi,\psi'$, respectively, it follows that $\phi + \lambda \phi'$ obeys the principle of minimum power
for $\psi + \lambda \psi'$.

(vi) By the arguments so far, if we have a Dirichlet form $P$ on any set $S$, and any subset
$R \subseteq S$, there is a linear map $f \maps \F^R \to \F^S$
sending any $\psi \in \F^R$ to the unique extension $\phi \in \F^S$ obeying the
principle of minimum power for $P$.    This gives a new quadratic form which we call
$\min_{S \setminus R} P$ on $\F^R$, defined by
\[      (\min_{S \setminus R} P) (\psi) = P(f(\psi) \\) .\]
To complete the theorem it suffices to show that $\min_{S \setminus R} P$ is a Dirichlet form.
Given Lemma \ref{lem:onestepdirichletmin} below, this is an easy inductive argument
where we `minimize' over one node at a time.
\end{proof}

\begin{lemma} \label{lem:onestepdirichletmin}
  Let $P$ be a Dirichlet form on a set $S$ and let $s \in S$. 
  Then the quadratic form $\min_{\{s\}} P$ is a Dirichlet form on $S \setminus \{s\}$.
\end{lemma}
\begin{proof}
  Write $P(\phi) = \sum_{i,j} c_{ij}(\phi_i -\phi_j)^2$. Since
  $(\phi_i-\phi_j)^2 = (\phi_j-\phi_i)^2$, we may assume without loss of
  generality that $c_{sk} =0$ for all $k$; indeed, if $c_{sk} \ne 0$, simply
  choose new coefficients $c'$ such that $c'_{ks}:=c_{ks}+c_{sk}$, $c'_{sk}:=0$,
  and $c'_{ij}:=c_{ij}$ when $i,j \ne s$. 

  We then have
  \[
    \frac{\partial P}{\partial \varphi(s)}\bigg\vert_{\varphi = \phi} = \sum_k
    2c_{ks}(\phi_s-\phi_k),
  \]
  and this is equal to zero when
  \[
    \phi_s = \frac{\sum_k c_{ks}\phi_k}{\sum_k c_{ks}}.
  \]
  Note that the $c_{ij}$ lie in $\F^+$, and $\F^+$ is closed under addition, so
  $\sum_k c_{ks} \ne 0$. Thus $\min_{\{s\}}P$ may be given explicitly by the
  expression
  \[
    \min_{\{s\}} P(\psi) = \sum_{i,j \in S \setminus \{s\}} c_{ij}(\psi_i -\psi_j)^2 +
    \sum_{\ell \in S \setminus \{s\}} c_{\ell s}\left(\psi_\ell - \frac{\sum_k
      c_{ks} \psi_k}{\sum_k c_{ks}}\right)^2.
  \]
  We must show this is a Dirichlet form on $S \setminus \{s\}$. 
  
  As the sum of Dirichlet forms is evidently Dirichlet, it suffices to check that the expression 
  \[
    \sum_\ell c_{\ell s}\left(\psi_\ell - \frac{\sum_k c_{ks} \psi_k}{\sum_k
      c_{ks}}\right)^2
  \]
  is Dirichlet on $S \setminus \{s\}$. Multiplying through by the constant
  $(\sum_k c_{ks})^2 \in \F^+$, it further suffices to check
  \begin{align*}
    \sum_\ell c_{\ell s}\left(\sum_k c_{ks} \psi_\ell - \sum_k c_{ks}
    \psi_k\right)^2 &= \sum_\ell c_{\ell s} \left(\sum_k c_{ks} (\psi_\ell -
    \psi_k)\right)^2 \\
    &= \sum_\ell c_{\ell s} \left(2 \sum_{\substack{k,m \\ k \ne m}} c_{k s} c_{ms}
    (\psi_\ell-\psi_k)(\psi_\ell - \psi_m) + \sum_{k} c_{k
    s}^2(\psi_\ell-\psi_k)^2\right) \\
    &= 2\sum_{\substack{k,\ell,m \\ k \ne \ell, k \ne m, \ell \ne m}} c_{\ell s}
    c_{k s} c_{ms}
    (\psi_\ell-\psi_k)(\psi_\ell - \psi_m) + \sum_{k, \ell} c_{\ell s}c_{k
    s}^2(\psi_\ell-\psi_k)^2
  \end{align*}
  is a Dirichlet form. Since we can rewrite the first sum using the fact that
  \begin{align*}
    &\quad (\psi_k - \psi_\ell)(\psi_k - \psi_m)+(\psi_\ell - \psi_k)(\psi_\ell -
    \psi_m) + (\psi_m-\psi_k)(\psi_m-\psi_\ell) \\ 
    &= \psi_k^2+\psi_\ell^2+\psi_m^2-\psi_k\psi_\ell- \psi_k\psi_m -
    \psi_\ell\psi_m \\
    &= \tfrac12\big( (\psi_k-\psi_\ell)^2 +(\psi_k-\psi_m)^2
    +(\psi_\ell-\psi_m)^2\big),
  \end{align*}
  this expression is indeed a Dirichlet form. Indeed, pasting these computations
  together shows that
  \[
    \min_{\{s\}}P(\psi) = \sum_{i,j} \left(c_{ij}+\frac{c_{is}c_{js}}{ \sum_k
    c_{ks}}\right)(\psi_i-\psi_j)^2. \qedhere
  \]
\end{proof}

It is now straightforward to generalize
Prop.~\ref{minimum_power_implies_kirchhoff_current} and
Prop.~\ref{boundary_current_determines_boundary_voltage} to the present
context.   Briefly, the principle of minimum power is equivalent to
Kirchhoff's current law, and the differential of the power functional determines 
the current flowing in or out of a circuit's terminals:

\begin{theorem} \label{thm:circuit_behavior_from_power}  
Given a passive linear circuit $(N,E,s,t,Z)$ with boundary
$\partial N \subseteq N$, suppose $\psi \in \R^{\partial N}$.    Then $\phi$ 
obeys the principle of minimum power for $\psi$ if and only if the current 
$I \in \R^N$ given by  
\[  I(e) = \frac1{Z(e)}(\phi(t(e))-\phi(s(e))) \] 
obeys Kirchhoff's current law on $N \setminus \partial N$.   Moreover, if $\phi$ 
obeys the principle of minimum power for $\psi$, then 
$dQ_\psi \in (\F^{\partial N})^\ast \cong \F^{\partial N}$ equals the boundary current
$\iota \in \F^{\partial N}$ given by 
\begin{align*}
\iota \maps \partial N &\longrightarrow \R \\
n &\longmapsto \sum_{t(e) = n} I(e) -\sum_{s(e) = n} I(e).
\end{align*}
\end{theorem}

\begin{proof} The proofs are the same as before, with formal derivatives
replacing derivatives.
\end{proof}

\subsection{Composition of Dirichlet forms}
\label{subsec:dirichlet_composition}

It would be nice to have a category in which circuits are morphisms, and a
category in which Dirichlet forms are morphisms, such that the map sending
a circuit to its behavior is a functor.  Here we present a na\"ive attempt to
construct the category with Dirichlet forms as morphisms, using the principle
of minimum power to compose these morphisms.  There is a hitch:
the proposed category does not include identity morphisms.  However, our construction
points in the right direction, and underlines the importance of the cospan
formalism we develop in the next Part~\ref{part:categories}.

We can define a composition rule for Dirichlet forms that reflects
composition of circuits.  Given finite sets $S$ and $T$, write $S+T$ for their
disjoint union.  Let $D(S,T)$ be the set of Dirichlet forms over $(\F,\F^+)$ on
$S+T$. There is a way to compose these Dirichlet forms
\[ 
\circ \maps D(T,U) \times D(S,T) \to D(S,U) 
\]
defined as follows.  Given $P \in D(T,U)$ and $Q \in D(S,T)$, let
\[ 
  (P \circ Q)(\alpha, \gamma) = \min_{T} Q(\alpha, \beta) + P(\beta, \gamma),
\]
where $\alpha \in F^S, \gamma \in F^U$. This operation has a clear
interpretation in terms of electrical circuits: the power used by the entire
circuit is just the sum of the power used by its parts. 

It is immediate from Thm.~\ref{thm:dirichlet_problem} 
that this composition rule is well defined: the composite of two Dirichlet
forms is again a Dirichlet form. Moreover, this composition is associative.
However, it fails to provide the structure of a category, as there is typically
no Dirichlet form $1_S \in D(S,S)$ playing the role of the identity for this
composition. For an indication of why this is so, let $S$ be a set
with one element, and suppose that some Dirichlet form $I(\beta,\gamma) =
k(\beta-\gamma)^2 \in D(S,S)$ acts as an identity on the
right for this composition. Then for all $Q(\alpha,\beta) = c(\alpha-\beta)^2
\in D(S,S)$, we must have
\[
  c\alpha^2 = Q(\alpha,0) = (I \circ Q)(\alpha,0) = \min_{\beta \in \F}
  Q(\alpha, \beta) + I(\beta,0) = \min_{\beta \in \F} k(\alpha-\beta)^2 +
  c\beta^2 = \frac{kc}{k+c}\alpha^2.
\]
But this
equality only holds when $c = 0$, so no such Dirichlet form exists. Note,
however, that in the real case, if $k \gg c$ we have 
$c\alpha^2 \approx \frac{kc}{k+c}\alpha^2$, so
Dirichlet forms with large values of $k$---corresponding to resistors with
resistance close to zero---act as `approximate identities'.

We might thus interpret any hypothetical identities in this category as
behaviors of idealized components with zero resistance, or perfectly conductive
wires. Unfortunately, the power functional of a purely conductive wire is
undefined; the formula for it involves division by zero.  In real life, coming
close to this situation leads to the disaster that electricians call a `short
circuit': a huge amount of power dissipated for even a small voltage.  

Nonetheless, we have most of the structure required for a category. A `category
without identity morphisms' is called a \define{semicategory}, so we see
\begin{proposition}
There is a semicategory where:
\begin{itemize}
\item the objects are finite sets,

\item a morphism from $T$ to $S$ is a Dirichlet form $Q \in D(S,T)$.  

\item composition of morphisms is given by 
\[
(R \circ Q)(\gamma, \alpha) = \min_{T} Q(\gamma, \beta) + R(\beta, \alpha).
\]

\end{itemize}
\end{proposition}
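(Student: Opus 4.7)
The plan is to verify the two remaining semicategory axioms: well-definedness of the composition operation (that $R \circ Q$ actually lives in $D(S,U)$), and associativity. The absence of identities has already been discussed at length in the preceding paragraphs and is explicitly waived.

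Well-definedness is essentially an application of Theorem \ref{thm:dirichletminimization}. Given $Q \in D(S,T)$ and $R \in D(T,U)$, regard each as a Dirichlet form on $S+T+U$ by extending trivially in the extra variables. Their pointwise sum $(\alpha,\beta,\gamma) \mapsto Q(\alpha,\beta) + R(\beta,\gamma)$ is a Dirichlet form on $S+T+U$: the class of Dirichlet forms is closed under addition, since the defining expression $\sum_{i,j} c_{ij}(\psi_i-\psi_j)^2$ with $c_{ij} \in \F^+ \cup \{0\}$ is preserved under sums ($\F^+$ is closed under addition, and adding $0$ poses no issue). Applying Theorem \ref{thm:dirichletminimization} with the inclusion $S+U \hookrightarrow S+T+U$ then shows that $\min_T(Q+R)$ is a Dirichlet form on $S+U$, which is precisely $R \circ Q$.

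For associativity, let $P \in D(S,T)$, $Q \in D(T,U)$, $R \in D(U,V)$. The strategy is to show that both iterated composites coincide with the joint minimum
\[
F(\alpha,\delta) \;=\; \min_{T \sqcup U}\bigl[P(\alpha,\beta) + Q(\beta,\gamma) + R(\gamma,\delta)\bigr],
\]
a well-defined Dirichlet form on $S+V$ by the same argument as above. The identification rests on a general claim about iterated minimization: for any Dirichlet form $\Phi$ on a set $X$ and any partition of $X \setminus R$ as $A \sqcup B$, one has $\min_{A \sqcup B}\Phi = \min_A(\min_B \Phi)$. This follows because a realizable extension of $\Phi$ in the $(A \sqcup B)$-directions can be assembled in two stages: first solve $\partial \Phi / \partial \varphi(b) = 0$ for all $b \in B$, producing the inner Dirichlet form $\min_B \Phi$; then solve the $A$-directional conditions for that form. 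Conversely, every joint realizable extension decomposes this way. Lemma \ref{lem:welldefineddirichletmin} guarantees that the value of $\Phi$ at any such extension is the same, so the minimized forms agree. Applying the observation with $(A,B) = (T,U)$ and with $(A,B) = (U,T)$ identifies both $(R \circ Q) \circ P$ and $R \circ (Q \circ P)$ with $F$.

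The main technical obstacle is verifying that the partial derivatives of $\min_B \Phi$ in the $A$-directions agree, at the inner realizable extension, with those of $\Phi$ itself---a formal analogue of the envelope theorem. Since $\F$ is arbitrary and all derivatives are formal polynomial derivatives, one cannot appeal to real-analytic calculus. The cleanest route is to iterate the explicit one-variable formula derived in the proof of Lemma \ref{lem:onestepdirichletmin},
\[
\min_{\{s\}} \Phi(\psi) = \sum_{i,j} \Bigl(c_{ij}+\tfrac{c_{is}c_{js}}{\sum_k c_{ks}}\Bigr)(\psi_i-\psi_j)^2,
\]
and check by direct computation on two successive eliminations that the resulting coefficients are symmetric in the order of the eliminated labels; induction then gives the general decomposition claim. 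Combined with Lemma \ref{lem:welldefineddirichletmin}, which sidesteps any question of nonuniqueness of realizable extensions by pinning down only their common value, this yields associativity.
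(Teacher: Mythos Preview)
Your proposal is correct and in fact supplies considerably more detail than the paper does. The paper's treatment of this proposition is essentially a one-line remark: well-definedness is ``immediate from Theorem \ref{thm:dirichletminimization}'' and associativity is simply asserted (``Moreover, this composition is associative''). Your well-definedness argument matches the paper's, and your associativity argument---reducing both triple composites to the joint minimum over $T \sqcup U$ via the decomposition $\min_{A \sqcup B}\Phi = \min_A(\min_B \Phi)$---is the natural one and fills in what the paper leaves implicit (indeed, the proof of Theorem \ref{thm:dirichletminimization} itself tacitly uses the same decomposition when it asserts $P_n = \min_{S\setminus R} P$).

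One remark on execution: the ``main technical obstacle'' you identify, the formal envelope theorem, is more cleanly handled than by grinding through the explicit coefficient formula of Lemma \ref{lem:onestepdirichletmin}. From that lemma's proof the minimizing value $\phi_s = \phi_s(\psi)$ is a linear function of the remaining variables, so $\min_{\{s\}}\Phi(\psi) = \Phi(\psi,\phi_s(\psi))$, and the formal chain rule in $\F[\{\varphi(n)\}]$ gives
\[
\frac{\partial(\min_{\{s\}}\Phi)}{\partial\psi_i} \;=\; \frac{\partial\Phi}{\partial\psi_i}\bigg|_{(\psi,\phi_s(\psi))} \;+\; \frac{\partial\Phi}{\partial\varphi(s)}\bigg|_{(\psi,\phi_s(\psi))}\cdot\frac{\partial\phi_s}{\partial\psi_i} \;=\; \frac{\partial\Phi}{\partial\psi_i}\bigg|_{(\psi,\phi_s(\psi))},
\]
since the middle factor vanishes by construction. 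Iterating this gives the decomposition directly, without needing to verify symmetry of the two-step elimination coefficients by hand.
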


We would like to make this into a category. One easy way to do this is to
formally adjoin identity morphisms; this trick works for any semicategory.
However, we obtain a better category if we include \emph{more} morphisms 
corresponding to circuits made of perfectly conductive wires. 

As the expression for the extended power functional includes the reciprocals of
impedances, such circuits cannot be expressed within the framework we have
developed thus far. Indeed, for these idealized circuits there is no function
taking boundary potentials to boundary currents: the vanishing impedance would
imply that any difference in potentials at the boundary induces `infinite'
currents. In the next part, we deal with this by generalizing Dirichlet forms to
Lagrangian relations.

\part{Categories of Circuits} 
\label{part:categories}
In this part we move our focus from the semantics of circuit diagrams to the
syntax, addressing the question ``How do we interact with circuit diagrams?''.
Informally, the answer to this is that we interact with them by connecting them
to each other, perhaps after moving them into the right form by rotating or
reflecting them, or by crossing or bending some of the wires. To formalize this,
we adopt a category theoretic viewpoint, defining various categories with
circuits and their behaviors as morphisms. As we wish to capture the above
operations, these categories will be endowed with additional structure, in particular
the structure of a hypergraph category.   We claim a formal analysis of this 
structure, especially of the composition or interconnection of circuits, has been 
overlooked in analysis of circuits thus far. 

This part culminates in the definition of two important categories, the
category $\Circ$ of circuit diagrams, and the category $\Lag\Rel$ containing all
behaviors of circuits. We also develop the technical material required to
appreciate the structure of these categories, and that aids understanding of the
relationship between the two, to be addressed in Part \ref{part:blackbox}.

\section{Decorated cospans} \label{sec:cospans}
We begin this part with a technical section describing a general technique for
developing composition rules for structures on finite sets. As we have seen,
whether represented by circuit diagrams or Dirichlet forms, circuits can be
described as structures on a finite set of nodes. While this provides a good
classification of the different types of circuits that exist, it does not allow
for discussion of their composition. In this section, however, we describe a
method for taking (1) a description of a structure that can be placed on finite
sets together with (2) a description of how this structure interacts with
functions between these sets, and producing a category which describes
composition of structures. This category is built as a cospan category, with the
apex of the cospan describing some structure, such as a circuit, and the feet of
the cospan describing possible interfaces to this structure.

\subsection{Cospan categories} \label{subsec:cospans}

Recall that a \define{cospan} from $X$ to $Y$ in a category $\C$ is an object
$S \in \C$ with a pair of morphisms $f\maps X \to S$, $g\maps Y \to S$:
\[
  \xymatrix{
    & S \\
    X \ar[ur]^{f} && Y. \ar[ul]_g
  }
\]
We call $X$ and $Y$ the \define{feet} of the cospan and call $S$ its \define{apex}.  
When $\C$ has pushouts, cospans may be composed using the
pushout from the common foot: given cospans $X \stackrel{f}{\longrightarrow} C
\stackrel{g}{\longleftarrow} Y$ and $Y
\stackrel{f'}{\longrightarrow} C' \stackrel{g'}{\longleftarrow} Z$, their
composite cospan is $X \stackrel{i \circ f}{\longrightarrow} P
\stackrel{i'\circ g'}{\longleftarrow} Z$ where $P, i$ and $i'$ form the top half of
this pushout square:
\[
  \xymatrix{
    && P \\
    & C \ar[ur]^i && C' \ar[ul]_{i'} \\
    X \ar[ur]^f && Y \ar[ul]_g \ar[ur]^{f'} && Z. \ar[ul]_{g'}
  }
\]
A \define{map of cospans} is a morphism $h\maps S \to S'$ in $\C$ between the
apices of two cospans $X \stackrel{f}{\longrightarrow} S
\stackrel{g}{\longleftarrow} Y$ and $X \stackrel{f'}{\longrightarrow} S'
\stackrel{g'}{\longleftarrow} Y$ with the same feet, such that 
\[
  \xymatrix{
    & S \ar[dd]^h  \\
    X \ar[ur]^{f} \ar[dr]_{f'} && Y \ar[ul]_g \ar[dl]^{g'}\\
    & S'
  }
\]
commutes. Given a category $\C$ with pushouts, we may define a category
$\Cospan(\C)$ with objects of $\C$ as objects and isomorphism classes of cospans 
in $\C$ as morphisms. We will often abuse our terminology
and refer to a cospans itself as a morphisms in some category
$\Cospan(\C)$; we of course refer instead to the isomorphism class of
the said cospan. Note that there is a functor
\[
 i \maps \C \to \Cospan(\C),
\]
taking any object of $\C$ to its corresponding
object in $\Cospan(\C)$ and taking any morphism $f\maps X \to Y$ to the
cospan $X \stackrel{f}\longrightarrow Y \stackrel{1_Y}\longleftarrow Y$. 
This functor is faithful and bijective on objects.   For
this reason we often treat $\C$ as a subcategory of $\Cospan(\C)$. 

Cospan categories also come equipped with a so-called dagger structure, which
maps $X \stackrel{f}{\longrightarrow} S \stackrel{g}{\longleftarrow} Y$ to its reflection $Y
\stackrel{g}{\longrightarrow} S \stackrel{f}{\longleftarrow} X$.   Moreover,
when $\C$ has finite colimits, this dagger structure arises from a compact
closed structure, which in turn results from a hypergraph structure.  Since
these structures are important in circuit theory, they warrant a brief review.

\subsection{Hypergraph categories} \label{subsec:hypergraph}

It is useful to treat systems with inputs and outputs as morphisms in a category, so that
composition corresponds to connecting systems, with the outputs of one 
system attached to the inputs of the next.   A symmetric monoidal category gives us the further
ability to treat systems with multiple inputs and outputs as morphisms between tensor products
of objects.  The calculus of string diagrams \cite{JS1} lets us reason with such morphisms
using diagrams.   In brief, to set up our conventions, we represent a morphism $f\maps
X_1 \otimes \dots \otimes X_m \to Y_1 \otimes \dots \otimes Y_n$ as follows:
\[
  \begin{tikzpicture}
  \node[draw, outer sep=0,thick,rounded corners=2pt, minimum height=8ex, minimum
  width=4ex] (f) {$f$};
  \node at ($(f.north west)!.2!(f.south west)-(1,0)$) (x1) {$X_1$};
  \node at ($(f.north west)!.8!(f.south west)-(1.018,0)$) (xn) {$X_n$};
  \node at ($(f.north west)!.42!(f.south west)-(.5,0)$) (xl) {$\vdots$};
  \node at ($(f.north east)!.25!(f.south east)+(1,0)$) (y1) {$Y_1$};
  \node at ($(f.north east)!.75!(f.south east)+(1.052,0)$) (yn) {$Y_m$};
  \node at ($(f.north east)!.42!(f.south east)+(.5,0)$) (yl) {$\vdots$};
  \begin{scope}[thick]
  \draw (x1) to ($(x1)+(1,0)$);
  \draw (xn) to ($(xn)+(1.018,0)$);
  \draw (y1) to ($(y1)-(1,0)$);
  \draw (yn) to ($(yn)-(1.052,0)$);
  \end{scope}
  \end{tikzpicture}
\]
Composition is then represented by connecting the lines or `wires' representing
the codomain of one morphism with the domain of the other placed beside it.
The tensor product of two morphisms is represented by their side-by-side
juxtaposition, and the symmetry by crossing wires.   

Dagger compact categories \cite{AC,Se} are a special class of symmetric monoidal
categories that permit additional manipulations.   The compactness lets us
convert an individual input into an output or vice versa by bending a wire 180
degrees.   The dagger structure lets us reflect the whole diagram, interchanging
all inputs and outputs.  Dagger compact categories are widespread in
applications of category theory to open systems \cite{BaezStay,CP}.    However,
most of the categories considered in this paper have even more structure: they
are hypergraph categories \cite{Fong16,FongSpivak18}.

We recall the definition of these with the help of string diagrams.  First, recall that a 
\define{special commutative Frobenius structure} $(\mu,\eta,\delta,\epsilon)$ on 
an object $X$ in a symmetric monoidal category $(\C,  \otimes, I)$ consists of morphisms 
\[
  \xymatrixrowsep{1pt}
  \xymatrixcolsep{30pt}
  \xymatrix{
    \mult{.075\textwidth} & \unit{.075\textwidth} &
    \comult{.075\textwidth} & \counit{.075\textwidth} 
    \\
    \mu\colon X\otimes X \to X & \eta\colon I \to X &
    \delta\colon X \to X\otimes X & \epsilon\colon X \to I
  }
\]
obeying the equations
\[
  \xymatrixrowsep{1pt}
  \xymatrixcolsep{25pt}
  \xymatrix{
    \assocl{.1\textwidth} = \assocr{.1\textwidth} & \unitl{.1\textwidth} =
    \idone{.1\textwidth} & \commute{.1\textwidth} = \mult{.07\textwidth}
  }
\]
\[
  \xymatrixrowsep{1pt}
  \xymatrixcolsep{25pt}
  \xymatrix{
    \frobs{.1\textwidth} = \frobx{.1\textwidth} 
    & \spec{.1\textwidth} = \idone{.1\textwidth}
    }
\]
and their reflected versions, where $\swap{1em}$ denotes the swap  
$X \otimes X \to X \otimes X$.   The first row of equations says that $X$ is a commutative
 monoid; its mirror image says that $X$ is a cocommutative comonoid.  The first equation
 in the second row, together with its mirror image, says that the monoid and comonoid
 structures form a Frobenius monoid.  The last equation is called the \define{special}
 law.

A \define{hypergraph category} is a symmetric monoidal category
$(\mathcal{C},I,\otimes)$ in which each object $X$ is equipped with a special
commutative Frobenius structure $(\mu_X,\eta_X,\delta_X,\epsilon_X)$  in such a way 
that the structure on $X \otimes Y$ is determined by those on $X$ and $Y$ as follows
  \[
    \tikzset{every path/.style={line width=1.1pt}}
    \xymatrixrowsep{1ex}
    \xymatrix{
    \begin{aligned}
    \resizebox{.15\textwidth}{!}{
      \begin{tikzpicture}[scale=.65]
	\begin{pgfonlayer}{nodelayer}
		\node [style=none] (0) at (-0.25, 0.5) {};
		\node [style=bdot] (1) at (0.5, -0) {};
		\node [style=none] (2) at (-0.25, -0.5) {};
		\node [style=none] (3) at (1.25, -0) {};
		\node [style=none] (4) at (-1.5, 0.5) {$X \otimes Y$};
		\node [style=none] (5) at (-1.5, -0.5) {$X \otimes Y$};
		\node [style=none] (6) at (2, -0) {$X \otimes Y$};
		\node [style=none] (7) at (-0.75, 0.5) {};
		\node [style=none] (8) at (-0.75, -0.5) {};
	\end{pgfonlayer}
	\begin{pgfonlayer}{edgelayer}
		\draw [in=90, out=0, looseness=0.90] (0.center) to (1.center);
		\draw [in=-90, out=0, looseness=0.90] (2.center) to (1.center);
		\draw (1.center) to (3.center);
		\draw (7.center) to (0.center);
		\draw (8.center) to (2.center);
	\end{pgfonlayer}
\end{tikzpicture}}
\end{aligned}
=
\begin{aligned}
    \resizebox{.12\textwidth}{!}{
  \begin{tikzpicture}[scale=.65]
	\begin{pgfonlayer}{nodelayer}
		\node [style=none] (0) at (-0.25, 0.5) {};
		\node [style=bdot] (1) at (0.5, -0) {};
		\node [style=none] (2) at (-0.25, -0.5) {};
		\node [style=none] (3) at (1.25, -0) {};
		\node [style=none] (4) at (-1.75, 0.5) {$X$};
		\node [style=none] (5) at (-1.75, -1.25) {$X$};
		\node [style=none] (6) at (1.5, -0) {$X$};
		\node [style=none] (7) at (-1.5, 0.5) {};
		\node [style=none] (8) at (-1.5, -1.25) {};
		\node [style=none] (9) at (1.25, -1.75) {};
		\node [style=none] (10) at (-1.5, -2.25) {};
		\node [style=none] (11) at (1.5, -1.75) {$Y$};
		\node [style=none] (12) at (-1.5, -0.5) {};
		\node [style=none] (13) at (-0.25, -2.25) {};
		\node [style=bdot] (14) at (0.5, -1.75) {};
		\node [style=none] (15) at (-0.25, -1.25) {};
		\node [style=none] (16) at (-1.75, -0.5) {$Y$};
		\node [style=none] (17) at (-1.75, -2.25) {$Y$};
	\end{pgfonlayer}
	\begin{pgfonlayer}{edgelayer}
		\draw [in=90, out=0, looseness=0.90] (0.center) to (1.center);
		\draw [in=-90, out=0, looseness=0.90] (2.center) to (1.center);
		\draw (1.center) to (3.center);
		\draw (7.center) to (0.center);
		\draw [in=180, out=0, looseness=1.00] (8.center) to (2.center);
		\draw [in=90, out=0, looseness=0.90] (15.center) to (14);
		\draw [in=-90, out=0, looseness=0.90] (13.center) to (14);
		\draw (14) to (9.center);
		\draw [in=180, out=0, looseness=1.00] (12.center) to (15.center);
		\draw (10.center) to (13.center);
	\end{pgfonlayer}
\end{tikzpicture}}
\end{aligned}
& &   
\qquad
  \begin{aligned}
    \resizebox{.1\textwidth}{!}{
    \begin{tikzpicture}[scale=.65]
	\begin{pgfonlayer}{nodelayer}
		\node [style=bdot] (0) at (-0.5, 0.5) {};
		\node [style=none] (1) at (1.5, 0.5) {$X\otimes Y$};
		\node [style=none] (2) at (0.75, 0.5) {};
	\end{pgfonlayer}
	\begin{pgfonlayer}{edgelayer}
		\draw (2.center) to (0.center);
	\end{pgfonlayer}
\end{tikzpicture}}
  \end{aligned}
  = \qquad
  \begin{aligned}
    \resizebox{.07\textwidth}{!}{
    \begin{tikzpicture}[scale=.65]
	\begin{pgfonlayer}{nodelayer}
		\node [style=bdot] (0) at (0, 0.5) {};
		\node [style=none] (1) at (1.5, 0.5) {$X$};
		\node [style=none] (2) at (1.25, 0.5) {};
		\node [style=none] (3) at (1.25, -0.25) {};
		\node [style=bdot] (4) at (0, -0.25) {};
		\node [style=none] (5) at (1.5, -0.25) {$Y$};
	\end{pgfonlayer}
	\begin{pgfonlayer}{edgelayer}
		\draw (2.center) to (0.center);
		\draw (3.center) to (4.center);
	\end{pgfonlayer}
\end{tikzpicture}}
  \end{aligned}
\\
    \begin{aligned}
    \resizebox{.15\textwidth}{!}{
\begin{tikzpicture}[scale=.65]
	\begin{pgfonlayer}{nodelayer}
		\node [style=none] (0) at (0.75, 0.5) {};
		\node [style=bdot] (1) at (0, -0) {};
		\node [style=none] (2) at (0.75, -0.5) {};
		\node [style=none] (3) at (-0.75, -0) {};
		\node [style=none] (4) at (2, 0.5) {$X \otimes Y$};
		\node [style=none] (5) at (2, -0.5) {$X \otimes Y$};
		\node [style=none] (6) at (-1.5, -0) {$X \otimes Y$};
		\node [style=none] (7) at (1.25, 0.5) {};
		\node [style=none] (8) at (1.25, -0.5) {};
	\end{pgfonlayer}
	\begin{pgfonlayer}{edgelayer}
		\draw [in=90, out=180, looseness=0.90] (0.center) to (1.center);
		\draw [in=-90, out=180, looseness=0.90] (2.center) to (1.center);
		\draw (1.center) to (3.center);
		\draw (7.center) to (0.center);
		\draw (8.center) to (2.center);
	\end{pgfonlayer}
\end{tikzpicture}}
\end{aligned}
=
\begin{aligned}
    \resizebox{.12\textwidth}{!}{
\begin{tikzpicture}[scale=.65]
	\begin{pgfonlayer}{nodelayer}
		\node [style=none] (0) at (0, 0.5) {};
		\node [style=bdot] (1) at (-0.75, -0) {};
		\node [style=none] (2) at (0, -0.5) {};
		\node [style=none] (3) at (-1.5, -0) {};
		\node [style=none] (4) at (1.5, 0.5) {$X$};
		\node [style=none] (5) at (1.5, -1.25) {$X$};
		\node [style=none] (6) at (-1.75, -0) {$X$};
		\node [style=none] (7) at (1.25, 0.5) {};
		\node [style=none] (8) at (1.25, -1.25) {};
		\node [style=none] (9) at (-1.5, -1.75) {};
		\node [style=none] (10) at (1.25, -2.25) {};
		\node [style=none] (11) at (-1.75, -1.75) {$Y$};
		\node [style=none] (12) at (1.25, -0.5) {};
		\node [style=none] (13) at (0, -2.25) {};
		\node [style=bdot] (14) at (-0.75, -1.75) {};
		\node [style=none] (15) at (0, -1.25) {};
		\node [style=none] (16) at (1.5, -0.5) {$Y$};
		\node [style=none] (17) at (1.5, -2.25) {$Y$};
	\end{pgfonlayer}
	\begin{pgfonlayer}{edgelayer}
		\draw [in=90, out=180, looseness=0.90] (0.center) to (1.center);
		\draw [in=-90, out=180, looseness=0.90] (2.center) to (1.center);
		\draw (1.center) to (3.center);
		\draw (7.center) to (0.center);
		\draw [in=0, out=180, looseness=1.00] (8.center) to (2.center);
		\draw [in=90, out=180, looseness=0.90] (15.center) to (14);
		\draw [in=-90, out=180, looseness=0.90] (13.center) to (14);
		\draw (14) to (9.center);
		\draw [in=0, out=180, looseness=1.00] (12.center) to (15.center);
		\draw (10.center) to (13.center);
	\end{pgfonlayer}
\end{tikzpicture}}
\end{aligned}
& &
  \begin{aligned}
    \resizebox{.1\textwidth}{!}{
    \begin{tikzpicture}[scale=.65]
	\begin{pgfonlayer}{nodelayer}
		\node [style=bdot] (0) at (1.5, 0.5) {};
		\node [style=none] (1) at (-0.5, 0.5) {$X\otimes Y$};
		\node [style=none] (2) at (0.25, 0.5) {};
	\end{pgfonlayer}
	\begin{pgfonlayer}{edgelayer}
		\draw (2.center) to (0.center);
	\end{pgfonlayer}
\end{tikzpicture}}
  \end{aligned}
  \qquad \quad =
  \begin{aligned}
    \resizebox{.07\textwidth}{!}{
    \begin{tikzpicture}[scale=.65]
	\begin{pgfonlayer}{nodelayer}
		\node [style=bdot] (0) at (1.5, 0.5) {};
		\node [style=none] (1) at (0, 0.5) {$X$};
		\node [style=none] (2) at (0.25, 0.5) {};
		\node [style=none] (3) at (0.25, -0.25) {};
		\node [style=bdot] (4) at (1.5, -0.25) {};
		\node [style=none] (5) at (0, -0.25) {$Y$};
	\end{pgfonlayer}
	\begin{pgfonlayer}{edgelayer}
		\draw (2.center) to (0.center);
		\draw (3.center) to (4.center);
	\end{pgfonlayer}
\end{tikzpicture}}
\qquad \quad
  \end{aligned}
}
\]
for all $X, Y \in \C$, and such that the structure on the monoidal unit $I$ is
equal to $(\rho^{-1}_I, \id_I,\rho_I,\id_I)$.

These operations on diagrams---placing diagrams on the same page, rearranging,
splitting, combining and terminating wires, and then connecting these wires to
form a larger diagram---represent precisely the collection of operations used
for reasoning with circuit diagrams.  Thus, hypergraph categories are a good
setting for formalizing circuit diagrams.   

In a hypergraph category $\H$ each object $X$ is its own dual, with the unit $I \to X \otimes X$ 
being the composite of $\eta_X \maps I \to X$ and $\delta_X \maps X \to X \otimes X$, and the counit $X \otimes X \to I$ being the composite of $\mu_X \maps X \otimes X \to X$ and 
$\epsilon_X \maps X \to I$.   Using this duality, every morphism $f \maps X \to Y$  gives rise to a 
morphism $f^\dagger \maps Y \to X$, making $\H$ into a dagger category.  It is then a simple
computation to check that $\H$ is in fact dagger compact.

\begin{example} \label{ex.cospan_is_hypergraph}
  Whenever $\C$ is a 
  category with finite colimits, $\Cospan(\C)$ is a hypergraph category \cite{Fong15,RSW08}.   
  To understand this, note first that $\Cospan(\C)$ is symmetric monoidal when equipped
  with the tensor product arising from coproducts in the category $\C$,
  together with the structure maps inherited from viewing $(\mathrm{\C},+)$
  as a subcategory.   The Frobenius structure $(\mu_X,\eta_X,\delta_X,\epsilon_X)$ on each object $X$ is then given by the cospans
  \[
  \xymatrixrowsep{1pt}
  \xymatrixcolsep{50pt}
    \xymatrix{
    X+X \xrightarrow{[1_X,1_X]} X \stackrel{1_X}\longleftarrow X &
    0 \stackrel{!}\longrightarrow X \stackrel{1_X}\longleftarrow X \\
    X \stackrel{1_X}\longrightarrow X \xleftarrow{[1_X,1_X]} X+X &
    X \stackrel{1_X}\longrightarrow X \stackrel{!}\longleftarrow 0
    }
  \]
  where $! \maps X \to 0$ is the unique morphism to the initial object of $\C$, 
  and we write $[f,g]$ for the copairing of two morphisms $f$ and $g$ with a common codomain.
 \end{example}

In addition to hypergraph categories, we need functors between them.  Given
hypergraph categories $\H$ and $\H'$, a \define{hypergraph functor} is a strong symmetric monoidal functor $(F,\varphi) \maps \H \to \H'$ that preserves the chosen Frobenius structures
on each object $X$.   More precisely, we demand that for each $X \in \H$, the Frobenius structure on $FX$ is
\[
  (F\mu_X \circ \varphi_{X,X},\enspace  \varphi^{-1}_{X,X} \circ F\delta_X,\enspace  F\eta_X \circ
  \varphi_I,\enspace  \varphi_I^{-1} \circ F\epsilon_X)
\]
where $\varphi_{X,X} \maps FX \otimes FX \to F(X \otimes X)$ and $\varphi_I \maps I \to FI$ are
the coherence maps for $F$.   If these coherence maps are all identity morphisms, we say $(F,\varphi) \maps \H \to \H'$ is a \define{strict} hypergraph functor.   

Just as any hypergraph category can be made into a dagger compact category as explained 
above, any hypergraph functor gives a symmetric monoidal dagger functor.  Thus, the category 
of hypergraph categories and hypergraph functors can be seen as an enhanced version of the
more widely studied category of dagger compact categories and symmetric monoidal monoidal
dagger functors.

\subsection{Decorated cospan categories}
\label{ssec:decorated_cospans}

An important example of a hypergraph category is the category of cospans 
in $\Fin\Set$, the category of finite sets and functions.  However, to describe circuits
we need a more flexible class of hypergraph categories.  This is provided by the idea 
of an `$F$-decorated' cospan: a cospan in $\Fin\Set$ in which the apex $N$ is equipped with an element of some set $FN$.  We think of $F$ as describing the collection of available structures on $N$: examples include the collection of circuit diagrams or Dirichlet
forms on $N$.

\begin{lemma} 
\label{lemma:decorated_cospans}
  Let
  \[
    (F,\varphi)\maps (\Fin\Set,+) \longrightarrow (\Set, \times)
  \]
  be a lax symmetric monoidal functor.  There is a category
  $F\Cospan$, the category of \define{$F$-decorated cospans}, with
   objects being finite sets and morphisms from $X$ to $Y$ being equivalence classes of pairs 
  \[
    (X \stackrel{i}\longrightarrow N \stackrel{o}\longleftarrow Y,\enspace s)
  \]
  comprising a cospan $X \stackrel{i}\rightarrow N \stackrel{o}\leftarrow Y$ in
  $\Fin\Set$ together with an element $s \in FN$.  We call $s$ the
  \define{decoration}. The equivalence relation arises
  from isomorphism of cospans: an isomorphism of cospans induces a one-to-one
  correspondence between their decorations.

  Composition in this category is given via pushout of cospans in 
  $\Fin\Set$:
  \[
    \xymatrix{
      && N+_YM \\
      & N \ar[ur]^{j_N} && M \ar[ul]_{j_M} \\
      \quad X \quad \ar[ur]^{i_X} && Y \ar[ul]_{o_Y} \ar[ur]^{i_Y} && \quad Z \quad \ar[ul]_{o_Z}
    }
  \]
 together with applying the map
  \[
   FN \times FM
    \xrightarrow{\varphi_{N,M}} F(N+M)
    \xrightarrow{F[j_N,j_M]} F(N+_YM)
  \]
 to the pair of decorations.
\end{lemma}

\begin{proof} This follows from Thm.~3.4 of \cite{Fong15}. \end{proof}

Decorated cospan categories are so named as they generalize the category of
cospans of finite sets: the theorem just cited also shows that there is a 
functor $i \maps \Cospan(\Fin\Set) \to F\Cospan$ that is faithful and bijective 
on objects.   

The category $F\Cospan$ is monoidal, where we define the tensor 
product of finite sets to be their disjoint union $X+Y$ and define the 
tensor product of decorated cospans $(X \stackrel{i_X}\longrightarrow N
\stackrel{o_Y}\longleftarrow Y,\enspace s)$ and
$(X' \stackrel{i_{X'}}\longrightarrow N' \stackrel{o_{Y'}}\longleftarrow
Y',\enspace s')$ to be 
\[
  \Big(\, X+X' \xrightarrow{i_X+i_{X'}} N+N' \xleftarrow{o_Y+o_{Y'}} Y+Y', \quad
  \varphi_{N,N'}(s,s') \,\Big).
\]
We also write $+$ for the tensor product in $F\Cospan$.   $F\Cospan$ also inherits a 
symmetric monoidal struture from its subcategory $\Cospan(\Fin\Set)$.
In fact $F\Cospan$ is a hypergraph category, and thus in particular dagger compact:

\begin{lemma} \label{lemma:dccsarehypergraph}
Let  $(F,\varphi)\maps (\Fin\Set,+) \longrightarrow (\Set, \times)$ be a lax symmetric 
monoidal functor. Then the symmetric monoidal category $F\Cospan$ can be  equipped 
with the structure of a hypergraph category in a unique way such that the functor 
$i \maps \Cospan(\Fin\Set) \to F\Cospan$ is a hypergraph functor.  
\end{lemma}

\begin{proof}
This follows from Thm.~3.4 of \cite{Fong15}.  Uniqueness follows from the fact
that $i$ is a strict monoidal functor, bijective on objects.
\end{proof}

Decorated cospans allow us to understand the syntax of circuit diagrams. 
Equally crucial to our understanding of circuit diagrams, however, is their semantics as 
discussed in Part \ref{part:circuits}.  For this, we use a procedure to construct
functors between decorated cospan categories.

\begin{lemma} \label{lemma:decoratedfunctors}
  Let 
  \[
    (F,\varphi), (G,\gamma)\maps  (\Fin\Set,+) \longrightarrow (\Set, \times)
  \]
  be lax symmetric monoidal functors and let 
  \[
    \theta\maps (F,\varphi) \Longrightarrow (G,\gamma)
  \]
  be a monoidal natural transformation between them. Then we may define a
  functor, in fact a hypergraph functor, 
  \[    T\maps F\Cospan \longrightarrow G\Cospan
  \]
  by letting any finite set
  $X$ in $F\Cospan$ map to the same finite set as an object of
  $G\Cospan$, and letting any morphism
  \[
    (X \stackrel{i}\longrightarrow N \stackrel{o}\longleftarrow Y, 1
    \stackrel{s}\longrightarrow FN)
  \]
  map to:
  \[
    (X \stackrel{i}\longrightarrow N \stackrel{o}\longleftarrow Y, 1
    \stackrel{\theta_N\circ s}\longrightarrow GN).
  \]
\end{lemma}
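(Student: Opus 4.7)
My plan is to verify four properties of $T$: (i) well-definedness on isomorphism classes of decorated cospans, (ii) functoriality, (iii) strict preservation of the symmetric monoidal structure, and (iv) preservation of the dagger. Item (iv) is essentially immediate because the dagger only reverses the legs of a cospan while leaving the decoration fixed, and $T$ acts only on the decoration. Item (i) is also quick: given an isomorphism of cospans $h\maps N \to N'$, naturality of $\theta$ gives $\theta_{N'} \circ F(h) = G(h) \circ \theta_N$, so $T$ sends equivalent decorated cospans to equivalent decorated cospans.

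The heart of the argument is preservation of composition. Given composable decorated cospans with apices $N$ and $M$ sharing foot $Y$, let $P = N +_Y M$ with canonical coprojections $j_N, j_M$. First I would write out the decoration obtained by composing in $F\mathrm{Cospan}$ and then applying $T$,
\[ \theta_P \circ F[j_N,j_M] \circ \varphi_{N,M} \circ (s \ot t) \circ \lambda^{-1}, \]
and compare it with the decoration obtained by applying $T$ first and then composing in $G\mathrm{Cospan}$,
\[ G[j_N,j_M] \circ \gamma_{N,M} \circ \big((\theta_N \circ s) \ot (\theta_M \circ t)\big) \circ \lambda^{-1}. \]
These agree by pasting two standard squares: the naturality square for $\theta$ on the morphism $[j_N,j_M]\maps N+M \to P$ gives $\theta_P \circ F[j_N,j_M] = G[j_N,j_M] \circ \theta_{N+M}$, and the monoidal-naturality axiom for $\theta$ gives $\theta_{N+M} \circ \varphi_{N,M} = \gamma_{N,M} \circ (\theta_N \ot \theta_M)$. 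Preservation of identities is an analogous but shorter diagram chase using the unit coherence for $\theta$ (compatibility with the empty decoration on $\varnothing$ pushed forward along $!\maps \varnothing \to X$).

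For (iii), strictness on objects is automatic since $T$ is the identity on objects. Strict compatibility with the tensor product of morphisms uses exactly the same monoidal-naturality equation as in the composition step, applied now to the coherence that defines the tensor of two decorated cospans. The symmetric monoidal coherence data of $F\mathrm{Cospan}$ and $G\mathrm{Cospan}$ are lifted from $\mathrm{Cospan}(\mathrm{FinSet})$ with essentially trivial decorations, so $T$ preserves them on the nose, yielding a strict symmetric monoidal structure.

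The main obstacle, as is typical when manipulating decorated cospans, is purely bookkeeping: organizing the two naturality squares for $\theta$ so that they paste cleanly against the backdrop of the pushout diagram in $\mathrm{FinSet}$ and the coherence isomorphisms of the lax monoidal functors $F$ and $G$. Beyond setting up the correct pasting diagram, no new idea beyond the naturality and monoidal-naturality of $\theta$ enters the proof.
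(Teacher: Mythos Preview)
Your argument is correct. The paper itself does not give a proof in situ: it simply records that the lemma is a special case of Theorem~4.1 of \cite{Fon}, the companion paper on decorated cospans. Your proposal supplies exactly the direct verification that reference would contain, namely the two-square paste combining naturality of $\theta$ at $[j_N,j_M]$ with the monoidal-naturality axiom $\theta_{N+M}\circ\varphi_{N,M} = \gamma_{N,M}\circ(\theta_N\otimes\theta_M)$. Your treatment of well-definedness, identities, the tensor, and the dagger is likewise the standard check. There is nothing to add; you have simply unpacked what the paper outsources to a citation.
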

\begin{proof}
This is a special case of Thm.~4.1 of \cite{Fong15}.
\end{proof}

\section{Open circuits and their semantics} \label{sec:circsemantics}
In Part \ref{part:circuits}, we defined a circuit with boundary to be a labelled
graph with a chosen subset of nodes called `terminals'. To form a category, we now more
explicitly describe these terminals using a cospan, as in the example seen earlier:
\[
\begin{tikzpicture}[circuit ee IEC, set resistor graphic=var resistor IEC graphic]
\node[contact] (I1) at (0,2) {};
\node[contact] (I2) at (0,0) {};
\coordinate (int1) at (2.83,1) {};
\coordinate (int2) at (5.83,1) {};
\node[contact] (O1) at (8.66,2) {};
\node[contact] (O2) at (8.66,0) {};
\node (input) at (-2,1) {\small{\textsf{inputs}}};
\node (output) at (10.66,1) {\small{\textsf{outputs}}};
\draw (I1) 	to [resistor] node [label={[label distance=2pt]85:{$1\Omega$}}] {} (int1);
\draw (I2)	to [resistor] node [label={[label distance=2pt]275:{$1\Omega$}}] {} (int1)
				to [resistor] node [label={[label distance=3pt]90:{$2\Omega$}}] {} (int2);
\draw (int2) 	to [resistor] node [label={[label distance=2pt]95:{$1\Omega$}}] {} (O1);
\draw (int2)		to [resistor] node [label={[label distance=2pt]265:{$3\Omega$}}] {} (O2);
\path[color=purple, very thick, shorten >=10pt, ->, >=stealth, bend left] (input) edge (I1);		\path[color=purple, very thick, shorten >=10pt, ->, >=stealth, bend right] (input) edge (I2);		
\path[color=purple, very thick, shorten >=10pt, ->, >=stealth, bend right] (output) edge (O1);
\path[color=purple, very thick, shorten >=10pt, ->, >=stealth, bend left] (output) edge (O2);
\end{tikzpicture}
\]
Such circuits are examples of decorated cospans.  We obtain a decorated cospan
category $\Circ$ with open circuits as morphisms.   The hypergraph
structure of $\Circ$ expresses many standard operations on circuits.   After constructing this category we describe the behavior of circuits in two ways: first in terms of Dirichlet forms, 
which describe the power consumed by a circuit as a function of potentials and currents, and 
second in terms of Lagrangian subspaces, which describe the physically allowed potentials and
currents.  This gives two further decorated cospan categories, related by hypergraph functors
\[  
\xymatrix{
  \Circ  \ar[r]^-{A} 
  & \Dirich\Cospan \ar[r]^{B} 
  & \Lag\Cospan. 
}
\]

\subsection{Open circuits}
\label{ssec:open}

We defined a circuit with boundary in Definition \ref{def_circuit_2}; we now introduce `open circuits', with inputs and outputs, to serve as the morphisms of a category.  We fix a field $\F$ with a set of positive elements $\F^+$.

\begin{definition} 
An \define{open passive linear circuit}, or \define{open circuit} for short, 
is a cospan of finite sets $X
\stackrel{i}{\longrightarrow} N \stackrel{o}{\longleftarrow} Y$ together with a
passive linear circuit whose set of nodes is $N$.
\end{definition}

This suggests that open circuits should be morphisms in a decorated cospan category.
Indeed, we now prove that the map taking a finite set $N$ to the set of passive linear circuits 
with set $N$ of nodes is a lax symmetric monoidal functor. This allows us to apply
Lemma \ref{lemma:decorated_cospans} to construct a category of circuits.

\begin{lemma}
\label{lemma:Circuit}
Define the functor
\[
  \Circuit\maps (\Fin\Set,+) \longrightarrow (\mathrm{Set},\times)
\]
on objects to take any finite set $N$ to the set
$\Circuit(N)$ of passive linear circuits $(N,E,s,t,Z)$ with $N$ as their
set of nodes. On
morphisms let it take a function $f\maps N \to M$ to the function that pushes
passive linear circuit structures on a set $N$ forward onto the set $M$:
\begin{align*}
  \Circuit(f)\maps \Circuit(N) &\longrightarrow
  \Circuit(M) \\
  (N,E,s,t,Z) &\longmapsto (M,E,f \circ s, f \circ t, Z).
\end{align*}
We obtain a lax symmetric monoidal functor by equipping this functor with the
natural transformation 
\begin{align*}
  \rho_{N,M}\maps \Circuit(N) \times \Circuit(M)
  &\longrightarrow \Circuit(N+M) \\
  \big( (N,E,s,t,Z), (M,F,s',t',Z') \big) &\longmapsto
  \big(N+M,E+F,s+s',t+t',[Z,Z']\big),
\end{align*}
together with the unit map
\begin{align*}
  \rho_1\maps 1 &\longrightarrow \Circuit(\varnothing) \\
  \bullet &\longmapsto
  (\varnothing,\varnothing,\varnothing,\varnothing,\varnothing),
\end{align*}
where we use $\varnothing$ to denote both the empty set and the unique function
of the appropriate codomain with domain the empty set. 
\end{lemma}

\begin{proof}
As $\Circuit(f)$ simply acts by post-composition for each $f$,
$\Circuit$ is indeed functorial. The naturality of $\rho$, as well as
the coherence laws for lax symmetric monoidal functors, follow from the
universal property of the coproduct. 
\end{proof}

Making use of Lemmas \ref{lemma:decorated_cospans} and \ref{lemma:dccsarehypergraph}, we obtain a hypergraph category whose morphisms are open circuits:

\begin{definition}
   Define the category $\Circ$ to be the decorated cospan category $\Circuit\Cospan$.
\end{definition}

\begin{corollary}
The category $\Circ$ is a hypergraph category.
\end{corollary}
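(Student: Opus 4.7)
The plan is to apply Lemma \ref{lemma:dccsaredccs} (the specialization of Theorem 3.4 of \cite{Fon}), which asserts that $F\mathrm{Cospan}$ is a dagger compact category whenever $F \maps (\mathrm{FinSet},+) \to (\mathcal{D},\otimes)$ is a lax symmetric monoidal functor. Since $\Circ = \mathrm{CircuitCospan}$ by definition, it suffices to verify that the triple $(\mathrm{Circuit}, \rho, \rho_1)$ constructed just above the corollary is a lax symmetric monoidal functor from $(\mathrm{FinSet},+)$ to $(\mathrm{Set},\times)$.

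First I would confirm functoriality of $\mathrm{Circuit}$: the assignment on morphisms is defined by post-composition with $f$ in the source and target maps, so $\mathrm{Circuit}(\id_N) = \id$ and $\mathrm{Circuit}(g \circ f) = \mathrm{Circuit}(g) \circ \mathrm{Circuit}(f)$ follow immediately from associativity of composition in $\mathrm{FinSet}$ and the fact that the edge set, resistance labels, and the graph's underlying data are unchanged. Next I would verify naturality of the transformation $\rho_{N,M}$: given functions $f\maps N \to N'$ and $g\maps M \to M'$, applying $\mathrm{Circuit}(f+g)$ after $\rho_{N,M}$ yields the $\F^+$-graph on $N'+M'$ with source map $(f+g) \circ (s+s') = (f\circ s) + (g \circ s')$, which agrees with the graph produced by applying $\rho_{N',M'}$ after $\mathrm{Circuit}(f) \times \mathrm{Circuit}(g)$. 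Both equalities reduce to the defining equations of the coproduct of functions, so naturality is automatic.

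For the coherence conditions, I would unpack the associativity hexagon and the unit triangles: these require that forming the disjoint union of $\F^+$-graphs on $N$, $M$, $P$ is associative up to the canonical isomorphism $(N+M)+P \cong N+(M+P)$, and that tensoring with the empty $\F^+$-graph over $\varnothing$ is the identity up to $N+\varnothing \cong N$. Both statements follow from the universal property of the coproduct in $\mathrm{FinSet}$, which guarantees a unique coherent isomorphism in each case; the edge set, resistance function, and source/target maps are transported along these isomorphisms without ambiguity. For symmetry, the braiding $\F^+\mathrm{-graph}(N) \times \F^+\mathrm{-graph}(M) \to \F^+\mathrm{-graph}(M) \times \F^+\mathrm{-graph}(N)$ intertwined with $\rho$ again reduces to the commutativity of the coproduct up to canonical iso. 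None of this is hard, but the verification is slightly tedious; the main thing to check carefully is that the coproduct of the source/target/label maps really does satisfy the hexagon identity on the nose once one has fixed the canonical associator of $+$.

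With $(\mathrm{Circuit}, \rho, \rho_1)$ confirmed to be lax symmetric monoidal, Lemma \ref{lemma:dccsaredccs} directly gives that $\Circ = \mathrm{CircuitCospan}$ is a dagger compact category, with tensor product, braiding, duals, unit, counit and dagger inherited from $\mathrm{Cospan(FinSet)}$ together with the disjoint-union decoration rule spelled out after Definition \ref{lemma:fcospans}. The only potential obstacle, and the reason the companion paper \cite{Fon} is cited, is ensuring that the decoration data transports correctly under the symmetry and duality isomorphisms of $\mathrm{Cospan(FinSet)}$; since our decorations are simply $\F^+$-graph structures on the apex and these are preserved under relabeling of nodes, this transport is manifest. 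Thus the corollary follows.
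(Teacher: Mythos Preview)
Your proposal is correct and follows exactly the paper's approach: the paper establishes that $(\mathrm{Circuit},\rho,\rho_1)$ is a lax symmetric monoidal functor (invoking, as you do, the universal property of the coproduct for the coherence conditions) and then simply specializes Lemma~\ref{lemma:dccsaredccs}. Your write-up spells out the functoriality, naturality, and coherence checks in more detail than the paper does, but the argument is the same.
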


The different structures of the category $\Circ$ capture different operations that can
be performed with circuits.  Composition expresses the fact that we can
connect the outputs of one circuit to the inputs of the next, while the monoidal
structure models the placement of circuits side-by-side. The symmetric
monoidal structure lets us reorder input and output wires, and the hypergraph structure 
let us join or split and start or end wires:

\[
  \xymatrixrowsep{1pt}
  \xymatrixcolsep{30pt}
  \xymatrix{
    \mult{.075\textwidth} & \unit{.075\textwidth} &
    \comult{.075\textwidth} & \counit{.075\textwidth} 
    \\
    \mu_1 \colon 1+1 \to 1 & \eta_1 \colon 0 \to 1 &
    \delta_1 \colon 1 \to 1+1 & \epsilon_1 \colon 1 \to 0
  }
\]

\subsection{The Dirichlet cospan semantics}
\label{ssec:DirichCospan}

As shown in Sec.\  \ref{sec:generalized}, any passive linear circuit gives 
a Dirichlet form, its extended power functional.   We now construct a category where 
the morphisms are cospans of finite sets decorated by Dirichlet forms, and a functor 
from $\Circ$ to this category.   This provides our first semantics for open circuits.  

In what follows we fix a field $\F$ with a set of positive elements $\F^+$.
Consider a cospan of finite sets $X \to N \leftarrow Y$ together with a
Dirichlet form $Q_N$ on the apex $N$. We call this a \define{Dirichlet cospan}.
To compose such cospans, say when given another cospan $Y \to M \leftarrow Z$
decorated by Dirichlet form $Q_M$, we decorate the composite cospan $X \to
N+_YM \leftarrow Z$ with the Dirichlet form
\[
\begin{array}{rccl}
  \Big({j_N}_\ast Q_N+ {j_M}_\ast Q_M\Big)\maps & \F^{N+_YM} &\longrightarrow& \F \\
  &\phi &\longmapsto& Q_N(\phi\circ j_N)+Q_M(\phi\circ j_M),
\end{array}
\]
where $j_N$ and $j_M$ are the maps that include $N$ and $M$ into the pushout $N +_{Y} M$.
Interpreted in terms of extended power functionals, this says that the
power consumed by the interconnected circuit is the sum of the power consumed by each
part.   This is formalized as follows:

\begin{lemma}
\label{lemma:Dirich}
There exists a unique lax symmetric monoidal functor
\[
  (\Dirich, \delta) \maps (\Fin\Set,+) \longrightarrow (\mathrm{Set},\times)
\]
that is given as follows.   The functor $\Dirich$ maps any finite set $X$ to the set $\Dirich(X)$ 
of Dirichlet forms $Q\maps \F^X \to \F$ on $X$, and it maps any function $f\maps X \to Y$ 
between finite sets to the function
\begin{align*}
  \Dirich(f)\maps \Dirich(X) &\longrightarrow \Dirich(Y)\\
  Q &\longmapsto f_{\ast}Q
\end{align*}
where $(f_* Q)(\phi) = Q(\phi \circ f)$ for any $\phi \in \F^Y$.   To make $\Dirich$ lax
symmetric monoidal, we equip it with the natural transformation
\begin{align*}
  \delta_{N,M}\maps \Dirich(N) \times \Dirich(M) &\longrightarrow
  \Dirich(N+M) \\
  (Q_N,Q_M) &\longmapsto {j_N}_\ast Q_N + {j_M}_\ast Q_M
\end{align*}
and also the map
\begin{align*}
  \delta_1\maps 1 &\longrightarrow \Dirich(\varnothing)\\
  \bullet &\longmapsto 0.
\end{align*}
Here the sum of two Dirichlet forms is given pointwise by the addition in
$\F$, and $0$ denotes the unique Dirichlet form on the empty set.
\end{lemma}

\begin{proof}
  As composition of functions is associative and has identities,
  $\Dirich$ is a functor.  The naturality of the $\delta_{N,M}$ follows
  from the universal property of the coproduct in $\Fin\Set$, while the symmetric
  monoidal coherence axioms follow from the associativity, unitality, and
  commutativity of addition in $\F$.
\end{proof}

We thus obtain a decorated cospan category $\Dirich\Cospan$ for which a morphism 
is a Dirichlet cospan.  Next we construct a functor $A \maps \Circ \to \Dirich\Cospan$ 
sending any open circuit to a Dirichlet cospan whose Dirichlet form is the extended 
power functional of that circuit.  For this we need the following lemma.

\begin{lemma} 
\label{lemma:A}
  The collection of maps
\begin{align*}
  \alpha_N\maps \Circuit(N) &\longrightarrow \Dirich(N) \\
  (N,E,s,t,Z) &\longmapsto \left(\phi \in \F^N \mapsto \frac{1}{2} \sum_{e \in E}
  \frac{1}{Z(e)}\big(\phi(s(e))-\phi(t(e))\big)^2\right)
\end{align*}
defines a monoidal natural transformation
\[
  \alpha \maps (\Circuit,\rho) \Longrightarrow
  (\Dirich,\delta).
\]
\end{lemma}

\begin{proof}
Naturality requires that the square
\[
  \xymatrix{
    \Circuit(N) \ar[r]^{\alpha_N} \ar[d]_{\Circuit(f)} &
    \Dirich(N) \ar[d]^{\Dirich(f)}  \\
    \Circuit(M) \ar[r]_{\alpha_M} & \Dirich(M)
  }
\]
commutes. Let $(N,E,s,t,r)$ be an $\F^+$-graph on $N$ and $f\maps N \to M$ be a
function $N$ to $M$. Then both $\Dirich(f) \circ \alpha_N$ and $\alpha_M
\circ \Circuit(f)$ map $(N,E,s,t,r)$ to the Dirichlet form
\begin{align*}
  \F^M &\longrightarrow \F;\\
  \psi &\longmapsto \frac{1}{2} \sum_{e \in E}\frac{1}{Z(e)}
  \big(\psi(f(s(e)))-\psi(f(t(e)))\big)^2.
\end{align*}
Thus both methods of constructing an extended power functional on a set of nodes $M$ from
a circuit on $N$ and a function $f\maps N \to M$ produce the same power functional.

To show that $\alpha$ is a monoidal natural transformation, we must check that
\[
\begin{aligned}
\xymatrix{
  \Circuit(N) \times \Circuit(M) \ar[r]^{\alpha_N \times
  \alpha_M} \ar[d]_{\rho_{N,M}} & \Dirich(N) \times \Dirich(M)
  \ar[d]^{\delta_{N,M}}  \\
  \Circuit(N+M) \ar[r]^{\alpha_{N+M}} & \Dirich(N+M)
}
\end{aligned}
\quad \mbox{and} \quad
\begin{aligned}
\xymatrixcolsep{1pt}
\xymatrix{
  & 1 \ar[dl]_{\rho_\varnothing} \ar[dr]^{\delta_\varnothing}\\
\Circuit(\varnothing)  \ar[rr]^{\alpha_\varnothing} &&
\Dirich(\varnothing)
}
\end{aligned}
\]
commute. It is readily observed that both paths around the square lead to taking
two graphs and summing their corresponding Dirichlet forms, and that the
triangle commutes immediately as all objects in it are the one element set.
\end{proof}

\begin{theorem}
\label{thm:A}   
The lax symmetric monoidal functor $\Dirich \maps (\Fin\Set, +) \to (\Set,\times)$ 
in Lemma \ref{lemma:Dirich} defines 
a decorated cospan category $\Dirich\Cospan$, and the monoidal natural transformation 
$\alpha \maps \Circuit \Rightarrow \Dirich$ in Lemma \ref{lemma:A} 
defines a strict hypergraph functor
\[
  A \maps \Circ \longrightarrow \Dirich\Cospan.
\]
\end{theorem}

\begin{proof} 
The first part follows from Lemmas \ref{lemma:decorated_cospans} and \ref{lemma:Dirich}, 
while the second follows from Lemmas \ref{lemma:decoratedfunctors} and \ref{lemma:Dirich}.
\end{proof}

Note that $A$ is not a faithful functor.  For example, applying $A$ to a circuit 
\[
\begin{tikzpicture}[circuit ee IEC, set resistor graphic=var resistor IEC
	graphic,scale=.8]
	\node[circle,draw,inner sep=1pt,fill=purple,color=purple]         (x) at
	(-2.8,0) {};
	\node at (-2.8,-1) {\footnotesize $X$};
	\node[circle,draw,inner sep=1pt,fill]         (A) at (0,0) {};
	\node[circle,draw,inner sep=1pt,fill]         (B) at (3,0) {};
	\node[circle,draw,inner sep=1pt,fill=purple,color=purple]         (y1) at
	(5.8,0) {};
	\node at (5.8,-1) {\footnotesize $Y$};
	\coordinate         (ua) at (.5,.25) {};
	\coordinate         (ub) at (2.5,.25) {};
	\coordinate         (la) at (.5,-.25) {};
	\coordinate         (lb) at (2.5,-.25) {};
	\path (A) edge (ua);
	\path (A) edge (la);
	\path (B) edge (ub);
	\path (B) edge (lb);
	\path (ua) edge  [resistor, circuit symbol unit=5pt, circuit symbol
	size=width {5} height 1.5] node[label={[label
	distance=1pt]90:{\footnotesize $r_1$}}] {} (ub);
	\path (la) edge  [resistor, circuit symbol unit=5pt, circuit symbol
	size=width {5} height 1.5] node[label={[label
	distance=1pt]270:{\footnotesize $r_2$}}] {} (lb);
	\path[color=purple, very thick, shorten >=10pt, shorten <=5pt, ->, >=stealth] (x) edge (A);
	\path[color=purple, very thick, shorten >=10pt, shorten <=5pt, ->, >=stealth] (y1) edge (B);
      \end{tikzpicture}
    \]
with two parallel edges of resistance $r_1$ and $r_2$ respectively, we obtain
the same result as for the circuit
\[
\begin{tikzpicture}[circuit ee IEC, set resistor graphic=var resistor IEC
	graphic,scale=.8]
	\node[circle,draw,inner sep=1pt,fill=purple,color=purple]         (x) at
	(-2.8,0) {};
	\node at (-2.8,-1) {\footnotesize $X$};
	\node[circle,draw,inner sep=1pt,fill]         (A) at (0,0) {};
	\node[circle,draw,inner sep=1pt,fill]         (B) at (3,0) {};
	\node[circle,draw,inner sep=1pt,fill=purple,color=purple]         (y) at
	(5.8,0) {};
	\node at (5.8,-1) {\footnotesize $Y$};
	\path (A) edge  [resistor, circuit symbol unit=5pt, circuit symbol
	size=width {5} height 1.5] node[label={[label
	distance=1pt]90:{\footnotesize $r$}}] {} (B);
	\path[color=purple, very thick, shorten >=10pt, shorten <=5pt, ->, >=stealth] (x) edge (A);
	\path[color=purple, very thick, shorten >=10pt, shorten <=5pt, ->, >=stealth] (y) edge (B);
      \end{tikzpicture}
    \]
with just a single edge with resistance
\[
  r = \frac{1}{\tfrac{1}{r_1} + \tfrac{1}{r_2}}.
\]

\subsection{Lagrangian subspaces}
\label{ssec:lagrangian_subspaces}

In Section \ref{ssec:LagCospan} we introduce our next semantics for open circuits, which
is conceptually simpler than the Dirichlet cospan semantics.
This new semantics simply specifies the space of all physically allowed potential 
and current readings at all nodes of the circuit.    In Lemma \ref{lemma:qfls} we prove that 
this space is a Lagrangian subspace of the symplectic vector space $\vect{N}$, where $N$ is 
the set of nodes of the circuit.   To explain this, we begin with a review of symplectic vector 
spaces and their Lagrangian subspaces.

To keep this review brief we omit proofs of some standard results.  See any introduction to 
symplectic vector spaces, such as Cimasoni and Turaev \cite{CT} or Piccione and Tausk 
\cite{PT}, for details.

\begin{definition}
  Given a finite-dimensional vector space $V$ over a field $\F$, a 
  \define{symplectic form}
  $\omega\maps V \times V \to \F$ on $V$ is an alternating nondegenerate bilinear
  form.  That is, a symplectic form $\omega$ is a function $V \times V \to \F$
  that is
  \begin{enumerate}[(i)]
    \item bilinear: for all $\lambda \in \F$ and all $u,v,u',v' \in V$ we have
    \begin{enumerate}
    \item $\omega(\lambda u,v) = \omega(u,\lambda v) =  \lambda \omega(u,v)$,
    \item $\omega(u+u',v) = \omega(u,v)+\omega(u',v)$,
    \item $\omega(u,v+v') = \omega(u,v)+\omega(u,v')$;
    \end{enumerate}
    \item alternating: for all $v \in V$ we have $\omega(v,v) = 0$; and
    \item nondegenerate: given $v \in V$, $\omega(u,v) = 0$ for all $u \in V$ if
      and only if $v = 0$.
  \end{enumerate} 
  A \define{symplectic vector space} $(V,\omega)$ is a finite-dimensional 
  vector space $V$ equipped with a symplectic form $\omega$.  Given symplectic vector spaces 
  $(V_1,\omega_1), (V_2, \omega_2)$, a
  \define{symplectic map} is a linear map 
  \[
    f\maps (V_1,\omega_1) \longrightarrow (V_2, \omega_2)
  \]
  such that $\omega_2(f(u),f(v)) = \omega_1(u,v)$ for all $u,v \in V_1$. A
  \define{symplectomorphism} is a symplectic map that is also an isomorphism. 
\end{definition}

An alternating form is always \define{antisymmetric}, meaning that $\omega(u,v) = 
-\omega(v,u)$ for all $u,v \in V$.  The converse is true except in characteristic 2.
A \define{symplectic basis} for a symplectic vector space $(V,\omega)$ is a
basis $\{p_1,\dots,p_n,q_1,\dots,q_n\}$ such that $\omega(p_i,p_j) =
\omega(q_i,q_j) = 0$ for all $1 \le i,j \le n$, and $\omega(p_i,q_j) =
\delta_{ij}$ for all $1 \le i,j\le n$, where $\delta_{ij}$ is the Kronecker delta,
equal to $1$ when $i =j$, and $0$ otherwise.  Every symplectic vector space
has a symplectic basis.  A symplectomorphism maps symplectic
bases to symplectic bases, and conversely, any map that takes a symplectic basis
to another symplectic basis is a symplectomorphism.  

The key example is this:

\begin{example}[The symplectic vector space generated by a finite set]
  \label{ex:symplectic_space_generated_by_set}
  Given a finite set $N$, we equip the vector space $\vect{N}$ with the
  symplectic form 
  \[
    \omega\big((\phi,i),(\phi',i')\big) = i'(\phi)-i(\phi').  
  \] 
  Let $\{\phi_n\}_{n \in N}$ be the basis of $\F^N$ consisting of the functions
  $N \to \F$ mapping $n$ to $1$ and all other elements of $N$ to $0$, and let
  $\{i_n\}_{n \in N} \subseteq {(\F^N)}^\ast$ be the dual basis. Then
  $\{(\phi_n,0),(0,i_n)\}_{n\in N}$ forms a symplectic basis for $\vect{N}$, which
  we call the \define{standard symplectic basis}.
\end{example}

There are two common ways to build new symplectic spaces from old ones: 
conjugation and summation. Given a symplectic form $\omega$ on $V$, we
may define its \define{conjugate} symplectic form $\overline\omega = - \omega$,
and write the conjugate symplectic space $(V,\overline\omega)$ as $\overline V$.
Given two symplectic vector spaces $(U, \nu),(V,\omega)$, we consider their
direct sum $U \oplus V$ a symplectic vector space with the symplectic form
$(\nu+\omega)\big((u,v),(u',v')\big) := \nu(u,u')+\omega(v,v')$, and call this the \define{sum} of the two symplectic vector spaces. Note that this is neither the product nor coproduct in 
the category of symplectic vector spaces and symplectic maps.

The symplectic form provides a notion of orthogonal complement.  Given a subspace $S$ of 
$V$, we define its \define{complement}
\[
  S^\circ = \{v \in V \mid \omega(v,s) = 0 \textrm{ for all } s \in S\}.
\]
This construction obeys the following identities, where $S$ and $T$
are subspaces of $V$:
\begin{align*}
  \dim S+ \dim S^\circ &= \dim V \\
  (S^\circ)^\circ &= S \\
  (S + T)^\circ &= S^\circ \cap T^\circ \\
  (S \cap T)^\circ &= S^\circ + T^\circ.
\end{align*}

In the symplectic vector space $\vect{N}$, the subspace $\F^N$ has the property
of being a maximal subspace such that the symplectic form restricts to the zero
form on this subspace. Subspaces with this property are known as Lagrangian
subspaces, and they may all be realized as the image of $\F^N$ under
symplectomorphisms from $\vect{N}$ to itself.

\begin{definition} 
  Let $L$ be a linear subspace of a symplectic vector space $(V,\omega)$. We say 
  that $L$ is \define{isotropic} if $L \subseteq L^\circ$, or equivalently, $\omega(v,w) = 0$
  for all $v,w \in L$.
  A subspace is \define{Lagrangian} if it is a maximal isotropic subspace.
\end{definition}

Lagrangian subspaces are also known as Lagrangian correspondences or canonical
relations.    They have various characterizations:

\begin{lemma} 
\label{lemma:lagrangian_characterization} 
  Given a subspace $L \subseteq V$ of a symplectic vector space $(V,\omega)$, the
  following are equivalent: 
  \begin{enumerate}[(i)] 
    \item $L$ is Lagrangian.  
    \item $L = L^\circ$.
    \item $L$ is isotropic and $\dim L = \frac12 \dim V$.
  \end{enumerate} 
\end{lemma}

From this proposition it follows easily that the direct sum of two Lagrangian
subspaces is Lagrangian in the sum of their ambient spaces. 

We saw from Thm.\ \ref{thm:circuit_behavior_from_power} that a circuit's behavior
is determined by the differential of a quadratic form.   Now we show that there is a 
one-to-one correspondence between quadratic forms on $\F^N$ and Lagrangian subspaces of 
$\vect{N}$ obeying a certain property.

\begin{lemma} \label{lemma:qfls}
  Let $N$ be a finite set. Given a quadratic form $P$ on $\F^N$, the
  subspace 
  \[ 
    \mathrm{Graph}(dP) = \big\{(\phi,dP_\phi) \mid \phi \in \F^N\big\} \subseteq \vect{N},
  \] 
  where $dP_\phi \in {(\F^N)}^\ast$ is the formal differential of $P$ at $\phi
  \in \F^N$, is Lagrangian. Moreover, this construction gives a one-to-one correspondence 
  \[ 
    \left\{\begin{array}{c} 
      \\ \mbox{Quadratic forms over $\F$ on $N$} \\ \phantom{.}
    \end{array} \right\} 
    \longleftrightarrow
    \left\{\begin{array}{c} 
      \mbox{Lagrangian subspaces of $\vect{N}$}\\
      \mbox{with trivial intersection with} \\ 
      \{0\} \oplus {(\F^N)}^\ast \subseteq \vect{N} 
    \end{array} \right\}.  
  \]
\end{lemma}

\begin{proof}
  The symplectic structure on $\vect{N}$ and our notation for it is given in
  Example \ref{ex:symplectic_space_generated_by_set}; in particular we write
  $\phi_n$ for the basis element of $\F^N$ corresponding to $n \in N$. 
  
  Our first claim is that $\mathrm{Graph}(dP)$ is Lagrangian. Since the formal
  differential $dP$ is linear in $\phi$, $\mathrm{Graph}(dP)$ is a linear
  subspace.  Moreover, for all $n,m \in N$ we have $dP_{\phi_n}(\phi_m)
  =\frac{\partial^2 P}{\partial \phi_n \partial \phi_m} = dP_{\phi_m}(\phi_n)$.
  As $dP_{(-)}(-)$ is linear in both arguments, this implies that for all
  $\phi,\psi \in \F^N$ we have
  \[
    \omega\big((\phi,dP_\phi),(\psi,dP_\psi)\big) = dP_\psi(\phi) -
    dP_\phi(\psi) = 0,
  \]
  so $\mathrm{Graph}(dP)$ is indeed Lagrangian.  Note also that $dP_0 = 0$ for
  all quadratic forms $P$, so $\mathrm{Graph}(dP)$ has trivial intersection with
  the subspace $\{0\} \oplus {(\F^N)}^\ast$ of $\vect{N}$. Thus
  $\mathrm{Graph}(dP)$ defines a function from quadratic forms to Lagrangian
  subspaces with this trivial intersection property. 

  It remains to show that this function is one-to-one. To do this, we construct
  an inverse. Suppose that $L$ is a Lagrangian subspace of $\vect{N}$
  such that $L \cap (\{0\} \oplus {(\F^N)}^\ast) = \{(0,0)\}$. We claim that for
  each $\phi \in \F^N$, there exists a unique $i_\phi \in {(\F^N)}^\ast$ such
  that $(\phi,i_\phi) \in L$, and that setting $P_L(\phi)=i_\phi(\phi)$ defines
  a quadratic form on $N$.

  Existence of such an $i_\phi$ is given by counting dimensions. By assumption,
  $i_\phi=0 \in (\F^N)^\ast$ is the unique such element for $\phi=0$. Fix some
  nonzero $\phi \in \F^N$, and write $\langle \phi\rangle$ for the subspace
  spanned by this vector. Then $L$ and $\langle \phi \rangle \oplus (\F^N)^\ast$
  are respectively $N$ and $N+1$ dimensional subspaces of the $2N$ dimensional
  vector space $\vect{N}$, and hence must intersect nontrivially.  Since $L$ and
  $\{0\}\oplus (\F^N)^\ast$ intersect trivially, this means there is a point of
  the form $(\phi,i_\phi)$ in the intersection, and hence in $L$.  For
  uniqueness, suppose we have $(\phi,i_\phi)$ and $(\phi,i_\phi')$ in $L$.  By
  linearity then $(0,i_\phi-i_\phi') \in L$, so $i_\phi=i_\phi'$. We thus can
  define a function from $\F^N $ to  $(\F^N)^\ast$ 
  sending $\phi$ to $i_\phi$.  This is linear, so it defines a bilinear map $B(\phi,\psi) = i_\phi(\psi)$ 
  on $\F^N \oplus \F^N$, and thus a quadratic form $P_L(\phi) = i_\phi(\phi)$ on $N$. 

  Finally, to check these constructions are inverses, we must check that
  $L(\phi) = dL_\phi(\phi)$, and that $d(P_L)_\phi =i_\phi$ where $(\phi,i_\phi)
  \in L$. These are straightforward computations.
\end{proof}

\subsection{Lagrangian relations}
\label{ssec:LagRel}

To develop our next semantics for open circuits we need Lagrangian relations.  
These also play an important role in black-boxing in Part \ref{part:blackbox}.   
Lagrangian relations give a way to think of certain Lagrangian subspaces, such 
as those arising from circuits, as morphisms in a category $\Lag\Rel$.   Here 
we construct this category.

Recall that a relation from the set $X$ to the set $Y$ is a subset $R$ of their product
$X \times Y$.  Given relations $R \subseteq X \times Y$ and $S
\subseteq Y \times Z$, there is a composite relation $S \circ R \subseteq X
\times Z$ given by pairs $(x,z)$ such that there exists $y \in Y$ with $(x,y)
\in R$ and $(y,z) \in S$---a direct generalization of function composition.

\begin{definition}
  Given symplectic vector spaces $V_1$ and $V_2$, a \define{Lagrangian relation} 
  $L\maps V_1 \asrelto V_2$ is a subset $L \subseteq V_1 \times V_2$ that is a Lagrangian
  subspace of $\overline{V_1} \oplus V_2$. 
\end{definition}

This is a generalization of the notion of symplectomorphism: the graph of
any symplectomorphism $f\maps V_1 \to V_2$ is a Lagrangian subspace 
$\mathrm{Graph}(f) \subseteq \overline{V_1} \oplus V_2$. More
generally, the graph of any symplectic map $f\maps V_1 \to V_2$ gives 
an isotropic subspace of $\overline{V_1} \oplus V_2$. 

The composite of two Lagrangian relations is again Lagrangian. 

\begin{lemma} \label{lemma:lagrangian_composition}
  Let $L\maps V_1 \asrelto V_2$ and $L'\maps V_2 \asrelto V_3$ be Lagrangian relations. 
  Then the composite relation $L' \circ L$ is a Lagrangian relation $V_1 \asrelto V_3$.
\end{lemma}

\begin{proof}
This is well known \cite{Weinstein}, but a self-contained proof can be found, e.g.,
in \cite[Prop.\ 6.40]{Fong16}.
\end{proof}

Composition of relations, and thus Lagrangian relations, is associative. 
Lagrangian relations also provide the identity morphisms
that were missing for Dirichlet forms in Section \ref{subsec:dirichlet_composition}.  
Namely, given a symplectic vector 
space $V$, the Lagrangian relation $\id_V \maps V \asrelto V$
\[
  \id_V = \{(v,v) \mid v \in V\} \subseteq \overline{V} \oplus V
\]
acts as an identity for composition of relations.  We thus have a category.
However, for technical reasons, it will be useful to work with an equivalent
subcategory:

\begin{definition}
Let $\Lag\Rel$ be the category with finite sets as objects and
Lagrangian relations $L \maps \vect{X} \asrelto \vect{Y}$ as morphisms from the
finite set $X$ to the finite set $Y$.
\end{definition}

In Thm.\ \ref{thm:LagCorel=LagRel} we describe an isomorphism between $\Lag\Rel$ 
and a category $\Lag\Corel$, which is a hypergraph
category.    This equips $\Lag\Rel$ itself with the structure of a hypergraph category.
In particular, $\Lag\Rel$ is symmetric monoidal with the tensor 
product of objects given by a chosen coproduct in $\Fin\Set$, and the tensor
product of morphisms given by the direct sum of Lagrangian relations.

\subsection{Symplectification}
\label{ssec:symplectification}

As the final technical step toward our second semantics for open circuits, we now
construct the symplectification functor $S \maps \Fin\Set \to \Lag\Rel$.    This
is a mathematically natural process, but as we shall see, it also expresses an important rule in 
circuit theory: ``when two wires join, set their potentials equal and add their currents''.

Suppose $f \maps X \to Y$ is a function between finite
sets.   We define the pullback map $f^\ast$ by
  \begin{align*}
    f^\ast\maps \F^Y &\longrightarrow \F^X \\
    \phi &\longmapsto \phi \circ f,
  \end{align*}
and the pushforward map $f_\ast$ by
  \begin{align*}
    f_\ast\maps (\F^X)^\ast &\longrightarrow (\F^Y)^\ast \\
    i &\longmapsto i(-\circ f).
  \end{align*}
Pullback defines a contravariant functor from $\Fin\Set$ to the category of finite-dimensional
vector spaces, while pushforward defines a covariant functor.  Note also that if $\phi \in 
\F^Y$ and $i \in (\F^X)^\ast$ we have
\[     i( f^\ast \phi) = (f_\ast i) (\phi) .\]

\begin{proposition}
\label{prop:symplectification}
There is a strong symmetric monoidal 
functor $S \maps (\Fin\Set,+) \to (\Lag\Rel, \oplus)$ that maps any finite set $X$ to itself,
and maps any function $f \maps X \to Y$ between finite sets to the Lagrangian relation
$S(f) \maps \vect{X} \asrelto \vect{Y}$ given by
\[     S(f) = \big\{ (f^\ast \phi, i, \phi, f_\ast i) \, \big \vert \; 
\phi \in \F^Y, i \in (\F^X)^\ast \big\} .\]
\end{proposition}

\begin{proof}
First we show that $S(f)$ is Lagrangian.  Its dimension is half that of $\overline{\vect{X}} \oplus
\vect{Y}$, so by Lemma \ref{lemma:lagrangian_characterization} is suffices to show that
$S(f)$ is isotropic.   To check this, choose two vectors in $S(f)$, say $v = (f^\ast \phi, i, \phi, f_\ast i)$ and $w = (f^\ast \phi', i', \phi', f_\ast i')$, and note that if $\omega$ is the symplectic
structure on $S(f)$, then
\[ 
 \omega(v,w) = -i'(f^\ast \phi) + i(f^\ast \phi') + (f_\ast i')(\phi) - (f_\ast i)(\phi') 
                     = 0 .
\]   

Next we show that $S$ is a functor.   Suppose we have functions between finite sets
$f \maps X \to Y$ and $g \maps Y \to Z$.   Then $S(f)$ is given as above, and
\[     S(g) = \big\{ (g^\ast \phi', i', \phi', g_\ast i') \, \big \vert \; 
\phi' \in \F^Z, i' \in (\F^Y)^\ast \big\} \]
so 
\[   \begin{array}{ccl}
S(g)S(f) &=& 
\big\{ (f^\ast \phi, i, \phi', g_\ast i') \, \big \vert \; 
\phi' \in \F^Z, i \in (\F^X)^\ast, \phi = g^\ast \phi',  f_\ast i = i' \big\} \\
&=& 
\big\{ (f^\ast g^\ast \phi', i, \phi', g_\ast f_\ast i) \, \big \vert \; 
\phi' \in \F^Z, i \in (\F^X)^\ast \big\} \\
&=&
\big\{ ((gf)^\ast \phi', i, \phi', (gf)_\ast i) \, \big \vert \; 
\phi' \in \F^Z, i \in (\F^X)^\ast \big\} \\
&=& S(gf) .   
\end{array}
\]
One can also check that $S$ preserves identities and that $S$ becomes
strong symmetric monoidal using the obvious natural isomorphism $\vect{X + Y} \cong
\vect{X} \oplus \vect{Y}$.
\end{proof}

\begin{example}
Suppose $X = \{1,2\}$, $Y = \{3\}$ and $f \maps X \to Y$ is the unique function.  Using
the standard symplectic bases to identify $S(X)$ with $\F^4 = \{(\phi_1, \phi_2, i_1, i_2)\}$ and
$S(Y)$ with $\F^2 = \{(\phi_3, i_3)\}$, a calculation shows that
\[    S(f) = \{(\phi_1,\phi_2,i_1,i_2,\phi_3,i_3) \big\vert \; \phi_1 = \phi_2 = \phi_3, 
i_1 + i_2 = i_3 \} .\]
This corresponds to the fact that when two perfectly conductive wires merge into one, 
Kirchoff's current law says the currents $i_1,i_2$ on the incoming wires sum to give the 
current $i_3$ on the outgoing wire, while the potentials on all three wires are equal.
\end{example}

\subsection{The Lagrangian cospan semantics}
\label{ssec:LagCospan}

We now present a second semantics for open circuits.
The first, in Thm.~\ref{thm:A}, was given by a functor $A \maps \Circ \to\Dirich\Cospan$
mapping any open circuit to the Dirichlet cospan describing its extended power functional.
In the second, we convert this Dirichlet cospan into a `Lagrangian cospan' using Lemma
\ref{lemma:qfls}, which relates Dirichlet forms and Lagrangian subspaces.   We start 
by constructing a category of Lagrangian cospans, $\Lag\Cospan$.   Then we construct a 
functor $B \maps \Dirich\Cospan \to \Lag\Cospan$.   Our second semantics for open circuits 
is then given by the composite 
\[
\xymatrix{
  \Circ  \ar[r]^-{A} 
  & \Dirich\Cospan \ar[r]^-{B} 
  & \Lag\Cospan .   \\
}
\]

We construct $\Lag\Cospan$ as a decorated cospan category using a lax symmetric monoidal functor $\Lag \maps (\Fin\Set,+) \to (\Set,\times)$.    On objects, this functor simply maps any 
finite set $X$ to the set of all Lagrangian subspaces of $\vect{X}$.  It will be useful
to regard these as Lagrangian relations $L \maps \{0\} \asrelto \vect{X}$. 

\begin{lemma}
\label{lemma:Lag}
There exists a unique lax symmetric monoidal functor
\[
  (\Lag,\lambda) \maps (\Fin\Set,+) \longrightarrow (\mathrm{Set},\times)
\]
that is given as follows.  The functor $\Lag$ maps any finite set $X$ to the set of Lagrangian
relations $\{0\} \asrelto \vect{X}$, and it maps any function $f \maps X \to Y$ between
finite sets to the function
\begin{align*}
  \Lag(f) \maps \Lag(X) &\longrightarrow \Lag(Y) \\
   L &\longmapsto S(f) \circ L.
\end{align*}  
To make $\Lag$ lax symmetric monoidal, we equip it with the natural transformation
\begin{align*}
  \lambda_{N,M}\maps \Lag(N) \times \Lag(M) &\longrightarrow
  \Lag(N+M)\\
  (L_N,L_M) &\longmapsto L_N \oplus L_M
\end{align*}
and also the map
\begin{align*}
  \lambda_1\maps 1 &\longrightarrow \Lag(\varnothing)\\
  \bullet &\longmapsto \{0\}.
\end{align*}
\end{lemma}
\begin{proof}
The functoriality of $\Lag$ follows from Prop.~\ref{prop:symplectification},
which says that symplectification is a functor $S \maps \Fin\Set \to \Lag\Rel$. 
The lax symmetric monoidality follows from properties of the direct sum of vector spaces.
\end{proof}

Using the theory of decorated cospans, we thus obtain a hypergraph category
$\Lag\Cospan$ where a morphism is a cospan of finite sets whose apex $N$
is equipped with a Lagrangian subspace of $\vect{N}$.   Next, we use theory of
decorated cospans to construct a hypergraph functor $R \maps \Dirich\Cospan
\to \Lag\Cospan$.

\begin{lemma}
\label{lemma:B}
Let
\[
  \beta\maps (\Dirich,\delta) \Longrightarrow (\Lag,\lambda)
\]
be the collection of functions
\begin{align*}
  \beta_N\maps \Dirich(N) &\longrightarrow \Lag(N) \\
  Q &\longmapsto \{(\phi,dQ_\phi) \mid \phi \in \F^N\} \subseteq \vect{N}.
\end{align*}
Then $\beta$ is a monoidal natural transformation.
\end{lemma}

\begin{proof}
Naturality requires that the square
\[
\xymatrix{
  \Dirich(N) \ar[r]^{\beta_N} \ar[d]_{\Dirich(f)} &
  \Lag(N) \ar[d]^{\Lag(f)}  \\
  \Dirich(M) \ar[r]_{\beta_M} & \Lag(M)
}
\]
commutes for every function $f\maps N \to M$. This is primarily a consequence of the
fact that the differential commutes with pullbacks.  Recall from Lemma \ref{lemma:Dirich}
that $\Dirich(f)$ maps any Dirichlet form $Q$ on $N$ to the form $f_\ast Q$,
and $\beta_M$ in turn maps this to the Lagrangian subspace 
\[
  \big\{(\psi,d(f_\ast Q)_\psi) \, \big\vert \, \psi \in \F^M\big\} \subseteq
  \F^M \oplus (\F^M)^\ast.
\]
On the other hand, $\beta_N$ maps a Dirichlet form $Q$ on $N$ to the Lagrangian
subspace
\[
\big\{(\phi,dQ_\phi) \,\big\vert\, \phi \in \F^N\big\}\subseteq
  \F^N \oplus (\F^N)^\ast, 
\]
before $\Lag(f)$ maps this to the Lagrangian subspace
\[
  \big\{(\psi, f_\ast dQ_\phi) \,\big\vert\, \psi \in \F^M, \phi =
  f^\ast(\psi)\big\} \subseteq \F^M\oplus (\F^M)^\ast.
\]
But $f_\ast dQ_{f^\ast\psi} = d(f_\ast Q)_{\psi}$, so these two processes
commute.

For $\beta$ to be monoidal, the diagrams 
\[
\begin{aligned}
\xymatrixcolsep{30pt}
\xymatrix{
  \Dirich(N) \times \Dirich(M) \ar[r]^(.55){\beta_N \times
  \beta_M} \ar[d]_{\delta_{N,M}} & \Lag(N) \times \Lag(M)
  \ar[d]^{\lambda_{N,M}}  \\
  \Dirich(N+M) \ar[r]_{\beta_{N+M}} & \Lag(N+M)
}
\end{aligned}
\quad
\mbox{and}
\quad
\begin{aligned}
\xymatrixcolsep{5pt}
  \xymatrix{
  & 1 \ar[dl]_{\delta_\varnothing} \ar[dr]^{\lambda_\varnothing}\\
\Dirich(\varnothing)  \ar[rr]_{\beta_\varnothing} &&
\Lag(\varnothing)
}
\end{aligned}
\]
must commute. They do: the Lagrangian subspace corresponding to the sum of Dirichlet
forms is equal to the sum of the Lagrangian subspaces that correspond to the
summand Dirichlet forms, while there is only a unique map $1 \to
\Lag(\varnothing)$.
\end{proof}

Summarizing, we obtain a functor $B$ sending any Dirichlet cospan to a Lagrangian cospan:

\begin{theorem}
\label{thm:R}   
The lax symmetric monoidal functor $\Lag \maps (\Fin\Set, +) \to (\Set,\times)$ 
in Lemma \ref{lemma:Lag} defines 
a decorated cospan category $\Lag\Cospan$, and the monoidal natural transformation 
$\beta \maps \Dirich \Rightarrow \Lag$ in Lemma \ref{lemma:B} 
defines a strict hypergraph functor
\[
  B\maps \Dirich\Cospan \longrightarrow \Lag\Cospan.
\]
\end{theorem}

\begin{proof} 
The first part follows from Lemma \ref{lemma:decorated_cospans},
while the second follows from Lemma \ref{lemma:decoratedfunctors}. 
\end{proof}

\part{The Black Box Functor} 
\label{part:blackbox}
In Part \ref{part:categories} we introduced a category $\Circ$ whose morphisms are open circuits,
and a category $\Lag\Rel$ whose morphisms are Lagrangian relations.   In this part we 
prove the main result of the paper, Thm.\ \ref{thm:main}, by constructing a functor that
relates these categories:
\[  
\blacksquare\maps \Circ \to \Lag\Rel .
\]
This is called the `black box functor' because it forgets the internal structure of
an open circuit and remembers only the relation it establishes between its input and 
output potentials and currents.   

The black box functor can be obtained in two ways, thanks to this commutative diagram:
\[
\xymatrix{
  \Circ = \Circuit\Cospan \ar[r]^-{A} 
  & \Dirich\Cospan \ar[r]^{B} \ar[d]_{\square_\Dirich} 
  & \Lag\Cospan \ar[d]^{\square_\Lag}    \\
  & \Dirich\Corel \ar[r]^{\widetilde{B}} 
  & \Lag\Corel \ar[r]_-{\cong}^-{C} 
  & \Lag\Rel.
}
\]
We have already seen the top line of this diagram, which involves decorated cospans. 
In Section \ref{ssec:DirichCospan} we constructed the functor $A$ mapping any open circuit
to a cospan of finite sets decorated with a Dirichlet form specifying its extended power
functional.     In Section \ref{ssec:LagCospan} we constructed a functor $B$ that converts
this Dirichlet form into a Lagrangian subspace.

The second line of the diagram involves `decorated corelations'.   
Decorated cospans describe the internal structure of a circuit, but decorated corelations
do not: they only describe its externally observable behavior.
A morphism in $\Dirich\Corel$ describes the power consumed for a given choice of input 
and output potentials.   A morphism in $\Lag\Corel$ describes the relation between
input and output potentials and currents.    Indeed, we shall prove that $\Lag\Corel$ is 
isomorphic to our earlier category $\Lag\Rel$.   Following either route in the diagram from $\Circ$ to
$\Lag\Rel$ gives the black box functor.

We begin in Section \ref{ssec:corelations} by describing corelations and how they describe
circuits made of ideal perfectly conductive wires.   In Section \ref{ssec:corelation_categories}
we recall the general theory of corelations and show that $\Fin\Corel$, the
category of finite sets and corelations between these, is a hypergraph category.  In Section \ref{ssec:deccorel} we explain decorated corelations.   In Section \ref{ssec:ideal_wires} we 
extend the previously defined symplectification functor from $\Fin\Set$ to $\Fin\Corel$.  This 
sets the stage for constructing the functor $\square_\Lag \maps
\Lag\Cospan \to \Lag\Corel$ in Section \ref{ssec:LagCorel}.    In this section we also
give the isomorphism $C \maps \Lag\Corel \to \Lag\Rel$ that completes the construction of the 
black box functor.    Section \ref{sec:main} clarifies the meaning of the black box functor
by building the commutative square above.  We then use this square to prove our main result, Thm.\ \ref{thm:main}.

\section{Decorated corelations} 
\label{sec:deccorel}
In this section we shall see that circuits made of ideal perfectly conductive 
wires are modelled by `corelations'.   We then observe
that Kirchhoff's laws follow directly from interpreting these structures in the
category of linear relations.

\subsection{Circuits of ideal wires as corelations}
\label{ssec:corelations}

In the category of sets we hold the fundamental relationship between sets to be
that of functions. These encode the idea of a deterministic process that takes
each element of one set to a unique element of the other. For the study of
networks this is less appropriate, as the relationship between terminals is not
an input-output one, but rather one of interconnection. Willems has repeatedly
emphasised the prevalence of input-output thinking as a limitation of current
techniques in control theory \cite{Wi,Wi2}.  

In particular, the direction of a function becomes irrelevant, and to describe
interconnections via the category of sets we must develop an understanding
of how to compose functions head to head and tail to tail. We have so far used
cospans and pushouts to address this.  Cospans, however, come with an apex, which
represents extraneous structure beyond the two sets we wish to specify a
relationship between.  Corelations arise from omitting this information.

\begin{definition}
\label{def:corelation}
  A \define{corelation} from a set $X$ to a set $Y$ is an equivalence
  relation on the disjoint union $X+Y$.
\end{definition}

To motivate the use of this category, let us start with a set of input
terminals $X$ and a set of output terminals $Y$.  We may connect these
terminals with ideal wires of zero impedance, whichever way we like---input to
input, output to output, input to output---producing something like this:
\[
  \begin{tikzpicture}[circuit ee IEC]
	\begin{pgfonlayer}{nodelayer}
		\node [contact] (0) at (-2, 1) {};
		\node [contact] (1) at (-2, 0.5) {};
		\node [contact] (2) at (-2, -0) {};
		\node [contact] (3) at (-2, -0.5) {};
		\node [contact] (4) at (-2, -1) {};
		\node [contact] (5) at (1, 0.75) {};
		\node [contact] (6) at (1, 0.25) {};
		\node [contact] (7) at (1, -0.25) {};
		\node [contact] (8) at (1, -0.75) {};
		\node [style=none] (9) at (-2.75, -0) {$X$};
		\node [style=none] (10) at (1.75, -0) {$Y$};
	\end{pgfonlayer}
	\begin{pgfonlayer}{edgelayer}
		\draw [thick] (5.center) to (1.center);
		\draw [thick] (6.center) to (1.center);
		\draw [thick] (3.center) to (2.center);
		\draw [thick] (4.center) to (8.center);
		\draw [thick] (5.center) to (6.center);
		\draw [thick] (6.center) to (0.center);
	\end{pgfonlayer}
\end{tikzpicture}
\]
In doing so, we introduce a notion of equivalence on our terminals, where two 
terminals are equivalent if electrons can move from one to 
another via some sequence of wires.   The connected components of the circuit
thus give a partition of the $X+Y$, transforming the above picture into a corelation
from $X$ to $Y$:
\[
  \begin{tikzpicture}[circuit ee IEC]
	\begin{pgfonlayer}{nodelayer}
		\node [contact, outer sep=5pt] (0) at (-2, 1) {};
		\node [contact, outer sep=5pt] (1) at (-2, 0.5) {};
		\node [contact, outer sep=5pt] (2) at (-2, -0) {};
		\node [contact, outer sep=5pt] (3) at (-2, -0.5) {};
		\node [contact, outer sep=5pt] (4) at (-2, -1) {};
		\node [contact, outer sep=5pt] (5) at (1, 0.75) {};
		\node [contact, outer sep=5pt] (6) at (1, 0.25) {};
		\node [contact, outer sep=5pt] (7) at (1, -0.25) {};
		\node [contact, outer sep=5pt] (8) at (1, -0.75) {};
		\node [style=none] (9) at (-2.75, -0) {$X$};
		\node [style=none] (10) at (1.75, -0) {$Y$};
		\node [style=none] (11) at (-0.5, 0.625) {};
		\node [style=none] (12) at (-0.5, -0.25) {};
		\node [style=none] (13) at (-0.5, -0.875) {};
	\end{pgfonlayer}
	\begin{pgfonlayer}{edgelayer}
		\draw [color=gray] (0.center) to (11.center);
		\draw [color=gray] (1.center) to (11.center);
		\draw [color=gray] (5.center) to (11.center);
		\draw [color=gray] (6.center) to (11.center);
		\draw [color=gray] (2.center) to (12.center);
		\draw [color=gray] (12.center) to (3.center);
		\draw [color=gray] (4.center) to (13.center);
		\draw [color=gray] (13.center) to (8.center);
		\draw [rounded corners=5pt, dotted] 
   (node cs:name=0, anchor=north west) --
   (node cs:name=1, anchor=south west) --
   (node cs:name=6, anchor=south east) --
   (node cs:name=5, anchor=north east) --
   cycle;
		\draw [rounded corners=5pt, dotted] 
   (node cs:name=2, anchor=north west) --
   (node cs:name=3, anchor=south west) --
   (node cs:name=3, anchor=south east) --
   (node cs:name=2, anchor=north east) --
   cycle;
		\draw [rounded corners=5pt, dotted] 
   (node cs:name=4, anchor=north west) --
   (node cs:name=4, anchor=south west) --
   (node cs:name=8, anchor=south east) --
   (node cs:name=8, anchor=north east) --
   cycle;
		\draw [rounded corners=5pt, dotted] 
   (node cs:name=7, anchor=north west) --
   (node cs:name=7, anchor=south west) --
   (node cs:name=7, anchor=south east) --
   (node cs:name=7, anchor=north east) --
   cycle;
	\end{pgfonlayer}
\end{tikzpicture}
\]
The dotted lines indicate equivalence classes of points, while for reference the
grey lines indicate wires connecting these points.

Given another circuit of this sort, say from $Y$ to $Z$, we may combine
these circuits to create a circuit  from $X$ to $Z$:
\[
  \begin{aligned}
\begin{tikzpicture}[circuit ee IEC]
	\begin{pgfonlayer}{nodelayer}
		\node [contact, outer sep=5pt] (0) at (1, 0.75) {};
		\node [contact, outer sep=5pt] (1) at (1, 0.25) {};
		\node [contact, outer sep=5pt] (2) at (1, -0.25) {};
		\node [contact, outer sep=5pt] (3) at (1, -0.75) {};
		\node [style=none] (4) at (-2.65, -0) {$X$};
		\node [style=none] (5) at (4.65, -0) {$Z$};
		\node [contact, outer sep=5pt] (6) at (-2, 1) {};
		\node [contact, outer sep=5pt] (7) at (-2, -0.5) {};
		\node [contact, outer sep=5pt] (8) at (-2, 0.5) {};
		\node [contact, outer sep=5pt] (9) at (-2, -0) {};
		\node [contact, outer sep=5pt] (10) at (-2, -1) {};
		\node [contact, outer sep=5pt] (11) at (4, -0) {};
		\node [contact, outer sep=5pt] (12) at (4, -1) {};
		\node [contact, outer sep=5pt] (13) at (4, -0.5) {};
		\node [contact, outer sep=5pt] (14) at (4, 0.5) {};
		\node [style=none] (15) at (-0.5, 0.625) {};
		\node [style=none] (16) at (-0.5, -0.25) {};
		\node [style=none] (17) at (-0.5, -0.875) {};
		\node [style=none] (18) at (2.5, -0.875) {};
		\node [contact, outer sep=5pt] (19) at (4, 1) {};
		\node [style=none] (20) at (1, -1.25) {$Y$};
		\node [style=none] (21) at (2.5, 0.75) {};
		\node [style=none] (22) at (2.5, -0.25) {};
	\end{pgfonlayer}
	\begin{pgfonlayer}{edgelayer}
		\draw [color=gray] (6.center) to (15.center);
		\draw [color=gray] (8.center) to (15.center);
		\draw [color=gray] (0.center) to (15.center);
		\draw [color=gray] (1.center) to (15.center);
		\draw [color=gray] (9.center) to (16.center);
		\draw [color=gray] (7.center) to (16.center);
		\draw [color=gray] (10.center) to (17.center);
		\draw [color=gray] (17.center) to (3.center);
		\draw [color=gray] (3.center) to (18.center);
		\draw [color=gray] (18.center) to (12.center);
		\draw [color=gray] (0.center) to (21.center);
		\draw [color=gray] (1.center) to (21.center);
		\draw [color=gray] (21.center) to (19.center);
		\draw [color=gray] (2.center) to (22.center);
		\draw [color=gray] (22.center) to (11.center);
		\draw [color=gray] (22.center) to (13.center);
		\draw [rounded corners=5pt, dotted] 
   (node cs:name=6, anchor=north west) --
   (node cs:name=8, anchor=south west) --
   (node cs:name=1, anchor=south east) --
   (node cs:name=0, anchor=north east) --
   cycle;
		\draw [rounded corners=5pt, dotted] 
   (node cs:name=9, anchor=north west) --
   (node cs:name=7, anchor=south west) --
   (node cs:name=7, anchor=south east) --
   (node cs:name=9, anchor=north east) --
   cycle;
		\draw [rounded corners=5pt, dotted] 
   (node cs:name=10, anchor=north west) --
   (node cs:name=10, anchor=south west) --
   (node cs:name=3, anchor=south east) --
   (node cs:name=3, anchor=north east) --
   cycle;
		\draw [rounded corners=5pt, dotted] 
   (node cs:name=2, anchor=north west) --
   (node cs:name=2, anchor=south west) --
   (node cs:name=2, anchor=south east) --
   (node cs:name=2, anchor=north east) --
   cycle;
		\draw [rounded corners=5pt, dotted] 
   (node cs:name=0, anchor=north west) --
   (node cs:name=1, anchor=south west) --
   (node cs:name=19, anchor=south east) --
   (node cs:name=19, anchor=north east) --
   cycle;
		\draw [rounded corners=5pt, dotted] 
   (node cs:name=2, anchor=north west) --
   (node cs:name=2, anchor=south west) --
   (node cs:name=13, anchor=south east) --
   (node cs:name=11, anchor=north east) --
   cycle;
		\draw [rounded corners=5pt, dotted] 
   (node cs:name=3, anchor=north west) --
   (node cs:name=3, anchor=south west) --
   (node cs:name=12, anchor=south east) --
   (node cs:name=12, anchor=north east) --
   cycle;
		\draw [rounded corners=5pt, dotted] 
   (node cs:name=14, anchor=north west) --
   (node cs:name=14, anchor=south west) --
   (node cs:name=14, anchor=south east) --
   (node cs:name=14, anchor=north east) --
   cycle;
	\end{pgfonlayer}
\end{tikzpicture}
\end{aligned}
\quad
  =
\quad
\begin{aligned}
\begin{tikzpicture}[circuit ee IEC]
	\begin{pgfonlayer}{nodelayer}
		\node [style=none] (0) at (-2.65, -0) {$X$};
		\node [style=none] (1) at (1.65, -0) {$Z$};
		\node [style=none] (20) at (1, -1.25) {$\phantom{Y}$};
		\node [contact, outer sep=5pt] (2) at (-2, 1) {};
		\node [contact, outer sep=5pt] (3) at (-2, -0.5) {};
		\node [contact, outer sep=5pt] (4) at (-2, 0.5) {};
		\node [contact, outer sep=5pt] (5) at (-2, -0) {};
		\node [contact, outer sep=5pt] (6) at (-2, -1) {};
		\node [contact, outer sep=5pt] (7) at (1, -0) {};
		\node [contact, outer sep=5pt] (8) at (1, -1) {};
		\node [contact, outer sep=5pt] (9) at (1, -0.5) {};
		\node [contact, outer sep=5pt] (10) at (1, 0.5) {};
		\node [style=none] (11) at (-0.5, 0.875) {};
		\node [style=none] (12) at (-1, -0.25) {};
		\node [contact, outer sep=5pt] (13) at (1, 1) {};
		\node [style=none] (14) at (0, -0.25) {};
	\end{pgfonlayer}
	\begin{pgfonlayer}{edgelayer}
		\draw [color=gray] (2.center) to (11.center);
		\draw [color=gray] (4.center) to (11.center);
		\draw [color=gray] (5.center) to (12.center);
		\draw [color=gray] (3.center) to (12.center);
		\draw [color=gray] (14.center) to (7.center);
		\draw [color=gray] (14.center) to (9.center);
		\draw [color=gray] (6.center) to (8.center);
		\draw [color=gray] (11.center) to (13.center);
		\draw [rounded corners=5pt, dotted] 
   (node cs:name=2, anchor=north west) --
   (node cs:name=4, anchor=south west) --
   (node cs:name=13, anchor=south east) --
   (node cs:name=13, anchor=north east) --
   cycle;
		\draw [rounded corners=5pt, dotted] 
   (node cs:name=5, anchor=north west) --
   (node cs:name=3, anchor=south west) --
   (node cs:name=3, anchor=south east) --
   (node cs:name=5, anchor=north east) --
   cycle;
		\draw [rounded corners=5pt, dotted] 
   (node cs:name=6, anchor=north west) --
   (node cs:name=6, anchor=south west) --
   (node cs:name=8, anchor=south east) --
   (node cs:name=8, anchor=north east) --
   cycle;
		\draw [rounded corners=5pt, dotted] 
   (node cs:name=10, anchor=north west) --
   (node cs:name=10, anchor=south west) --
   (node cs:name=10, anchor=south east) --
   (node cs:name=10, anchor=north east) --
   cycle;
		\draw [rounded corners=5pt, dotted] 
   (node cs:name=7, anchor=north west) --
   (node cs:name=9, anchor=south west) --
   (node cs:name=9, anchor=south east) --
   (node cs:name=7, anchor=north east) --
   cycle;
	\end{pgfonlayer}
\end{tikzpicture}
\end{aligned}
\]
This is given by composition of corelations: we take the transitive closure of
the two equivalence relations, and then restrict it to an equivalence relation
on $X+Z$.   This composition gives a category $\Fin\Corel$ with finite sets as
objects and corelations as morphisms.   In fact $\Fin\Corel$ is a hypergraph
category---but this is just a special case of a more general result in the next
section.

\subsection{Corelation categories}
\label{ssec:corelation_categories}

There are two equivalent ways to think of a relation $R \maps X \asrelto Y$ between
sets.  One is as a subset of $X \times Y$, and another is as an 
isomorphism class of jointly monic spans $X \leftarrow R \to Y$, where
\define{jointly monic} means that the resulting map $R \to X \times Y$ is monic.
Dually, there are two ways to think of a corelation from $X$ to $Y$.  One is as an
equivalence relation on the set $X + Y$.   Another  is as an isomorphism class 
of \define{jointly epic} cospans $X \rightarrow C \leftarrow Y$: that is, 
cospans for which the associated map $X + Y \to C$ is epic.  The points of $C$ 
correspond to equivalence classes of the equivalence relation on $X + Y$.

This second approach to corelations can be developed more generally in any
category with a suitable factorization system.

\begin{definition}
  A \define{factorization system} $(\E,\M)$ in a category
  $\C$ is a pair of subcategories $\E$ and $\M$ of $\mathcal
  C$ such that:
  \begin{enumerate}[(i)]
    \item $\E$ and $\M$ contain all isomorphisms of $\mathcal
      C$.
    \item  Every morphism $f$ of $\C$ admits a factorization $f=me$ with $e \in \E$ and $m \in \M$.
\item  Given morphisms $f,f'$ of $\C$ with factorizations $f = me$, $f' = m'e'$ of the above sort, for every $u$, $v$ such that $vf = f'uu$, there exists a unique morphism $s$ such that
  \[
    \xymatrixcolsep{3pc}
    \xymatrixrowsep{3pc}
    \xymatrix{
      \ar[r]^e \ar[d]_u & \ar[r]^m \ar@{.>}[d]^{\exists! s} &  \ar[d]^v \\
       \ar[r]_{e'}& \ar[r]_{m'} & 
    }
  \]
  commutes.
  \end{enumerate}
\end{definition}

\begin{definition}
\label{def:EM-corelation}
  Let $\C$ be a category with finite colimits, and let $(\E,
  \M)$ be a factorization system on $\C$. A \define{corelation} from $X \in \C$ to $Y \in \C$
  is an isomorphism class of cospans $X
  \stackrel{i}\longrightarrow N \stackrel{o}\longleftarrow Y$ in $\C$ that is \define{jointly
  $\E$-like}: the copairing $[i,o]\maps X+Y \to N$ lies in $\E$.
\end{definition}

In this situation we can convert any cospan
$X \stackrel{i}\longrightarrow N \stackrel{o}\longleftarrow Y$ into a corelation 
as follows.   First factor the copairing $[i,o] \maps X + Y \to N$ as  $X+Y \stackrel{e}{\longrightarrow} \overline{N} \stackrel{m}{\longrightarrow} N$ with $e \in \E$ and 
$m \in \M$.   Then form the corelation coming from the cospan 
\[
\xymatrix{
  X \ar[r]^-{e j_X} 
  & \overline{N} 
  & Y .\ar[l]_-{e j_Y}  
}
\]
We call this cospan the \define{$\E$-part} of the cospan we started with, and
$m\maps \overline{N} \to N$ the \define{$\M$-part}.

If we compose corelations as cospans, the result is typically not a corelation.  Thus, 
we compose corelations by taking the $\E$-part of their composite
cospan. This composition rule turns out to give a category when the factorization system is `co-stable', the notion dual to stability under pullbacks.

\begin{definition}
  Let $\C$ be a category with finite colimits, and let $(\E,
  \M)$ be a factorization system on $\C$.   We say
  that a factorization system $(\E,\M)$ is \define{co-stable} if for every pushout square
  \[
    \xymatrixcolsep{3pc}
    \xymatrixrowsep{3pc}
    \xymatrix{
      \ar[r]^j & \\
      \ar[u] \ar[r]_m &  \ar[u]
    }
  \]
  such that $m \in \M$, we also have that $j \in \M$. 
\end{definition}

\begin{lemma} \label{lemma:corelcat}
  Let $\C$ be a category with finite colimits and a co-stable factorization
  system $(\E,\M)$. Then there exists a category $\Corel(\C)$ with the objects
  of $\C$ as objects, $(\E,\mathcal M)$-corelations as morphisms, and
  composition given as above. Moreover, there exists a unique hypergraph structure on
  $\Corel(\C)$ such that the map taking a cospan to its $\E$-part defines a
  hypergraph functor $\square\maps \Cospan(\C) \to \Corel(\C)$.
\end{lemma}

\begin{proof}
This is Thm.\ 3.1 and Cor.\ 4.5 of \cite{Fong18}. Note that the fact that
$\square$ is a hypergraph functor immediately implies that the hypergraph
structure on each object of $\Corel(\C)$ must be the image of the hypergraph
structure of that same object considered inside $\Cospan(\C)$.
\end{proof}

We need three examples of this construction:

\begin{example}
\label{ex:Corel}
If $\C = \Fin\Set$, its epi-mono factorization system is co-stable and the resulting
category $\Corel(\Fin\Set)$ is isomorphic to the category $\Fin\Corel$ explained in the
previous section.     The category $\Fin\Corel$ thus becomes a hypergraph category.  In
Section \ref{ssec:corelations} we explained how morphisms in $\Fin\Corel$ describe circuits 
of ideal wires.
\end{example}

\begin{example}
\label{ex:LinCorel}
If $\C = \Fin\Vect$ is the category of finite-dimensional vector spaces over a field
$\F$, then its epi-mono factorization is co-stable, and we obtain a category 
$\Corel(\Fin\Vect)$, which we call $\Lin\Corel$.    This is isomorphic to the category 
$\Lin\Rel$ where objects are finite-dimensional vector spaces and morphisms are 
linear relations \cite[Sec.\ 7.2]{Fong18}.  A morphism in $\Lin\Corel$ is an isomorphism 
class of cospans $V \stackrel{i}\longrightarrow S \stackrel{o}\longleftarrow W$ in $\Fin\Vect$ 
such that $[i,o] \maps V \oplus W \to S$ is onto.   There is an isomorphism $\Lin\Corel 
\cong \Lin\Rel$ that sends any such morphism to the isomorphism class of spans 
$V \leftarrow \ker(i-o) \rightarrow W$, where 
\[  \ker(i-o) = \{(v,w) \in V\oplus W | \; i(v) - o(w) = 0\}   \]
and the legs of the span come from the projections of $V\oplus W$ onto $V$ and $W$.
\end{example}

\begin{example}
\label{ex:LinRel}
If $\C = \Fin\Vect^\opp$, then its epi-mono factorization is also 
co-stable, and the resulting category $\Corel(\Fin\Vect^\opp)$ is also isomorphic to $\Lin\Rel$,
since by duality a corelation in $\Fin\Vect^\opp$ is the same as a relation in $\Fin\Vect$.   
One must be careful, however, because this isomorphism $\Lin\Rel \cong \Corel(\Fin\Vect^{\opp})$ 
gives $\Lin\Rel$ a different hypergraph structure than the isomorphism $\Lin\Rel \cong \Corel(\Fin\Vect)$ in the previous example.
\end{example}

There is also an easy way to get hypergraph functors between corelation categories.

\begin{lemma} \label{lemma:corelfunctors}
  Let $\C$, $\C'$ have finite colimits and co-stable
  factorization systems $(\E, \M)$ and $(\E', \M')$, respectively.  
  Further let $A\maps \C \to \C'$ be a functor that
  preserves finite colimits and such that the image of $\M$ lies in
  $\M'$.  Then there exists a unique hypergraph functor $\Corel(A) \maps \Corel(\C) \to
  \Corel(\C')$ sending each object $X$ in $\Corel(\C)$ to $F(X)$
  in $\Corel(\C')$ and each corelation 
 \[ X \stackrel{i}{\longrightarrow} N \stackrel{o}{\longleftarrow} Y \]
to the $\E'$-part
\[
\xymatrix{
  A(X) \ar[rr]^-{e' j_{A(X)}} 
  && \overline{A(N)} 
  && A(Y) \ar[ll]_-{e' j_{A(Y)}}  
}
\]
of its image in $\C'$.   
\end{lemma}

\begin{proof}   This is Prop.\ 4.1 of \cite{Fong18}.
\end{proof}

\subsection{Decorated corelation categories}
\label{ssec:deccorel}

In Lemma \ref{lemma:decorated_cospans} we described a systematic procedure
for constructing decorated cospan categories.    This has an analogue for corelations.
In what follows we assume $\C$ has finite colimits and a co-stable factorization system $(\E, \M)$.

\begin{definition}
Define the category $(\C;\M^\opp)$ to have objects those of $\C$, with morphisms
from $X$ to $Y$ being isomorphism classes of cospans $X \stackrel{f}\longrightarrow N \stackrel{m}\longleftarrow Y$ with $f$ in $\C$ and $m$ in $\M$, and composition given by
pushout. 
\end{definition}

The category $(\C;\M^\opp)$ becomes symmetric monoidal using the coproduct $+$ in $\C$.

\begin{lemma} 
\label{lemma:deccorel}
Let 
\[
(F,\varphi)\maps (\C;\M^\opp,+)\longrightarrow (\Set, \times)
\]
be a lax symmetric monoidal functor. There is a category
$F\Corel$, the category of \define{$F$-decorated corelations}, with objects being
those of $\C$ and morphisms from $X$ to $Y$ being equivalence classes of pairs
\[
    (X \stackrel{i}\longrightarrow N \stackrel{o}\longleftarrow Y,\enspace s)
\]
consisting of a corelation with an element $s \in F(N)$
called the \define{decoration}.   The equivalence relation arises from isomorphism of cospans.
The composite of morphisms in $F\Corel$ is the composite of their corelations decorated by 
the image of the pair of decorations under the map 
  \[
   FN \times FM
    \xrightarrow{\varphi_{N,M}} F(N+M)
    \xrightarrow{F\big(\xrightarrow{[j_N,j_M]}\xleftarrow{m}\big)} F(\overline{N+_YM}),
  \]
 where $m\maps \overline{N+_YM} \to N+_YM$ is the $\M$-part of the composite
 cospan.

Moreover, abusing notation to write $F$ also for the restriction of $F$ to the subcategory
$\C$ of $\C;\M^\opp$, there exists a functor 
\[
\square_F \maps F\Cospan \to F\Corel
\]
mapping each object $X$ to itself and each $F$-decorated cospan $(X
\xrightarrow{i} N \xleftarrow{o} Y, s)$ to the $F$-decorated corelation $(X
\xrightarrow{ej_X} \overline{N} \xleftarrow{ej_Y} Y, Fm^\opp(s))$, where $[i,o]$
factorizes as $m \circ e$, and $m^\opp$ is the morphism
$N\xrightarrow{1}N\xleftarrow{m}\overline{N}$ in $\C;\M^\opp$.

The category $F\Corel$ can be equipped with the structure of a hypergraph category 
in a unique way such that $\square_F $ is a hypergraph functor.
\end{lemma}

\begin{proof}
This is Thm.~5.8 and Cor.~6.2 of \cite{Fong18}.
\end{proof}

\begin{lemma} 
\label{lemma:deccorelfunctor}
  Let $\mathcal C$, $\mathcal C'$ have finite colimits and respective costable
  factorisation systems $(\mathcal E, \mathcal M)$, $(\mathcal E', \mathcal
  M')$, and suppose that we have symmetric lax monoidal functors
\[
  F \maps (\mathcal C;\mathcal M^\opp,+) \longrightarrow (\Set, \times),
\]
\[
  G\maps (\mathcal C';\mathcal M'^\opp,+) \longrightarrow (\Set, \times).
\]
Suppose $A\maps \mathcal C \to \mathcal C'$ be a functor that preserves
finite colimits and such that the image of $\mathcal M$ lies in $\mathcal M'$.
This functor $A$ extends canonically 
to a symmetric monoidal functor from $\mathcal C;\mathcal M^\opp$
to $\mathcal C';\mathcal M'^\opp$, which we again call $A$.

Suppose we have a monoidal natural transformation $\theta$:
\[
  \xy
   (-20,8)*+{ \mathcal C; \mathcal M^\opp}="1";
   (-20,-8)*+{\mathcal C'; \mathcal {M'}^\opp}="2";
   (10,0)*+{\Set}="3";
        {\ar^{F} "1";"3"};
        {\ar_{G} "2";"3"};
        {\ar^{} "1";"2"};
        {\ar@{=>}^{\theta} (-13,-3)*{}; (-7,2)*{}};
\endxy
\]
Then we may define a hypergraph functor $T\maps F\mathrm{Corel} \to
G\mathrm{Corel}$ sending each object $X \in F\mathrm{Corel}$ to $AX \in
G\mathrm{Corel}$ and each decorated corelation 
  \[
    \left(
    \begin{aligned}
      \xymatrix{
	& N \\  
	X \ar[ur]^{i} && Y \ar[ul]_{o}
      }
    \end{aligned}
    ,
    \qquad
    \begin{aligned}
      \xymatrix{
	FN \\
	1 \ar[u]_{s}
      }
    \end{aligned}
    \right)
    \quad \mbox{to}
    \quad
    \left(
    \begin{aligned}
      \xymatrix{
	& \overline{AN} \\  
	AX \ar[ur]^{e'\circ\iota_{AX}} && AY \ar[ul]_{e'\circ\iota_{AY}}
      }
    \end{aligned}
    ,
    \qquad
    \begin{aligned}
      \xymatrixrowsep{1.5ex}
      \xymatrix{
	G\overline{AN} \\
	GAN \ar[u]_{Gm_{AN}^\opp}\\
        FN \ar[u]_{\theta_N}\\
	1 \ar[u]_{s}
      }
    \end{aligned}
    \right).
  \]  
\end{lemma}
\begin{proof}
See Prop. 6.2 of \cite{Fong18}.
\end{proof}

\section{Constructing the black box functor}
\label{sec:blackbox}

We now complete our construction of the black 
box functor $\blacksquare \maps \Circ \to \Lag\Rel$ by composing the Lagrangian cospan 
semantics $BA \maps \Circ \to \Lag\Cospan$ with two more functors:
\[
\xymatrix{
  \Circ = \Circuit\Cospan \ar[r]^-{A}
  & \Dirich\Cospan \ar[r]^{B}  
  & \Lag\Cospan \ar[d]^{\square_\Lag}   
   \\
  & 
  & \Lag\Corel \ar[r]_-{\cong}^-{C}
  & \Lag\Rel.
}
\]
The first, $\square_\Lag \maps \Lag\Cospan \to \Lag\Corel$, 
does the real work of black-boxing.   This is an example of the construction in Lemma \ref{lemma:deccorel}.  However, to apply this lemma we first need to construct the category of Lagrangian corelations, $\Lag\Corel$, which we do in Sections \ref{ssec:ideal_wires} and \ref{ssec:LagCorel}.   In Section \ref{ssec:LagCorel} we also construct the isomorphism $C\maps\Lag\Corel \to \Lag\Rel$ that completes the black box functor.   In Section \ref{ssec:blackbox} we assemble the black box functor and describe what it does.  In brief, it assigns to any open circuit the Lagrangian relation between its input and output potentials and currents.

\subsection{The semantics of ideal wires}
\label{ssec:ideal_wires}

To construct the category of Lagrangian corelations as a decorated corelation
category using Lemma \ref{lemma:deccorel} we now extend symplectification from
$\Fin\Set$ to $\Fin\Corel$, obtaining a lax symmetric monoidal functor 
\[   S \maps (\Fin\Corel, +) \to  (\Set,\times)  .\]  
Far from being a mere technical device, this functor describes how potentials
and currents behave in circuits made of ideal wires. Just as we built the
symplectification functor of Section \ref{sec:circsemantics} using pullback and
pushforward maps, we build this extension in two analogous parts, one for
potentials and the other for currents.

The first part is a functor $\Phi$ mapping each corelation to the linear
relation it imposes between potentials at inputs and outputs.  This functor
expresses the fact that the potential is constant on any connected component of
a circuit made of ideal wires.

\begin{proposition}
\label{prop:Phi}
  Define the functor 
  \[ 
    \Phi\maps \Fin\Corel \longrightarrow \Lin\Rel
  \] 
  on objects by sending a finite set $X$ to the vector space $\F^X$, and on
  morphisms by sending a corelation $e\maps X \to Y$ to the linear subspace
  $\Phi(e)$ of $\F^X \oplus \F^Y$ comprising functions $\phi\maps X+Y \to \F$
  that are constant on each set $e^{-1}(n)$ for $n \in N$.  The functor $\Phi$ is strong 
  symmetric monoidal, with coherence maps the usual natural isomorphisms 
  $\F^X \oplus \F^Y \cong \F^{X + Y}$ and $\{0\} \cong \F^\varnothing$.   
\end{proposition}

\begin{proof}
Consider the free vector space functor $\Psi\maps \Fin\Set \to
\Fin\Vect^\opp$ mapping a finite set $X$ to the vector space $\F^X$, and the function
$f\maps X \to Y$ to the linear map $f^\ast\maps \F^Y \to \F^X$ sending
$\phi\maps Y \to \F$ to $\phi \circ f\maps X \to \F$. The functor $\Psi$ has a
left adjoint, and hence preserves colimits.  Also note that it sends injective
functions to surjective linear maps, which are monomorphisms in
$\Fin\Vect^\opp$. Applying Lemma~\ref{lemma:corelfunctors}, we obtain a
strong symmetric monoidal functor from $\Fin\Corel$ to $\Corel(\Fin\Vect^\opp)$.  Composing 
this with the isomorphism $\Corel(\Fin\Vect^\opp) \cong \Lin\Rel$ from Example \ref{ex:LinRel}
we obtain a strong symmetric monoidal functor $\Phi \maps \Fin\Corel \to \Lin\Rel$.

It is easily checked that $\Phi$ maps any corelation $e= [i,o]\maps X+Y \to
N$ to the linear relation $\mathrm{im}(e^*) \subseteq \F^X \oplus \F^Y$, which
consists of all functions $X+Y \to \F$ that are constant on each fiber of $e$.
\end{proof}

The second part is a functor $I$ mapping each corelation to the linear relation it imposes between
currents at inputs and outputs.   This functor expresses Kirchhoff's current
law: the sum of currents flowing into each node must equal the sum of currents flowing out.  

\begin{proposition}
\label{prop:I}
  Define the functor
  \[
    I\maps \Fin\Corel \longrightarrow \Lin\Rel
  \]
  as follows. On objects send a finite set $X$ to the vector space
  $(\F^{X})^\ast$, the free vector space on $X$, with basis given by symbols 
  $dx$, one for each element $x \in X$.   On morphisms send a
  corelation $e=[i,o]\maps X +Y \to N$ to the linear relation 
  \[
    I(e) := \left\{\left(\sum_{x \in X} a_xdx,\sum_{y \in Y} b_ydy\right) 
    \middle|
    \sum_{i(x)=n} a_x = \sum_{o(y)=n} b_y \right\}
    \subseteq  (\F^{X})^\ast \oplus (\F^{Y})^\ast.
  \]
  This functor $\Phi$ is strong symmetric monoidal functor, with coherence maps the natural
  isomorphisms $(\F^X)^\ast \oplus (\F^Y)^\ast \to (\F^{X+Y})^\ast$ and $\{0\}
  \to (\F^\varnothing)^\ast$.      
\end{proposition}
\begin{proof}
  This proposition follows in the same manner as the previous one, but
  applying Lemma~\ref{lemma:corelfunctors} to the free vector
  space functor $J\maps \Fin\Set \to \Fin\Vect$.  This can be seen as mapping any 
  finite set $X$ to the vector space $(\F^X)^\ast$, which has a basis given by symbols
  $dx$, one for each element of $x$.
   Given a function $f\maps X \to Y$, $J$ maps $f$ to the linear map sending each
   basis element $dx$ to the basis element $d(f(x))$.  The functor $J$ preserves colimits 
   and maps monos to monos, so the hypotheses of Lemma~\ref{lemma:corelfunctors} are 
   satisfied. We thus obtain a strong symmetric monoidal functor from $\Fin\Set$ to 
   $\Corel(\Fin\Vect)$.   Composing this with the isomorphism $\Corel(\Fin\Vect)^\opp \cong
   \Lin\Rel$ from Example \ref{ex:LinCorel} we obtain a strong symmetric monoidal functor
   $I \maps \Fin\Corel \to \Lin\Rel$.

  By definition this functor $I$ maps a corelation $e=[i,o]\maps X+Y \to N$
  to the kernel of the linear map $J(i) - J(o) \maps (\F^X)^\ast \oplus
  (\F^Y)^\ast \to (\F^N)^\ast$. This consists of all linear combinations
  $\left(\sum_{x \in X} a_xdx,\sum_{y \in Y} b_ydy\right)$ such that
  $\sum_{x \in X} a_x d(i(x)) =  \sum_{y \in Y} b_y d(o(y))$, proving the
  above formula for $I(e)$.
\end{proof}

We have now defined two functors that describe the behavior of 
circuits made of ideal wires: one for potentials and one for currents.  
Combining these, we obtain a functor describing the behavior of both 
potentials and currents.   The relation between potentials and currents at
inputs and outputs is not merely a linear relation: it is a Lagrangian relation.

\begin{theorem} 
\label{thm:symplectification}
  We may define a strong symmetric monoidal functor $S$
  \[
    S\maps (\Fin\Corel,+) \longrightarrow (\Lag\Rel,\oplus)
  \]
  sending any finite set $X$ to the symplectic vector space $\F^X \oplus
  (\F^X)^\ast$ and any corelation $e=[i,o]\maps X+Y \to N$ to the Lagrangian
  relation
  \[
    S(e) = \Phi(e) \oplus I(e) \subseteq \overline{\F^X \oplus
    (\F^X)^\ast}\oplus \F^Y \oplus (\F^Y)^\ast.
  \]
\end{theorem}

Note that $\Phi(e)\oplus I(e)$ is more properly a subspace of $\F^X
\oplus \F^Y \oplus (\F^X)^\ast \oplus (\F^Y)^\ast$. In the above we consider it
instead as a subspace of the symplectic vector space $\overline{\F^X \oplus
(\F^X)^\ast}\oplus \F^Y \oplus (\F^Y)^\ast$, whose underlying vector space is
canonically isomorphic to $\F^X \oplus \F^Y \oplus (\F^X)^\ast \oplus
(\F^Y)^\ast$.

\begin{proof}
  As the pointwise tensor product in $\Lin\Rel$ of strong symmetric monoidal
  functors $\Phi$ and $I$, $S$ is itself a strong symmetric monoidal functor
  $(\Fin\Corel,+) \to (\Lin\Rel,\oplus)$. 
  
  It remains to show that the image of each corelation $e$ is Lagrangian.   We
  prove this using Lemma \ref{lemma:lagrangian_characterization}: a Lagrangian
  subspace is an isotropic subspace of dimension half that of the symplectic
  vector space. By compactness, we may assume without loss of generality that
  $X=\varnothing$. Consider then some element $(\phi_Y,i_Y) \in S(e)$. To prove
  isotropy, note that (i) for each $n \in N$ there exists $\phi_n \in \F$ such
  that $\phi_n=dy(\phi_Y)$ for all $y \in Y$ such that $o(y)= n$, and (ii) that
  $\sum_{o(y)=n}\lambda_y =0$.  Then $Se$ is isotropic as, for all pairs
  $(\phi_Y,i_Y)$, $(\phi_Y',i_Y') \in Se$ we have
  \[
    \omega\big((\phi_Y,i_Y),(\phi_Y',i_Y')\big)
    =  \sum_{y \in Y} \lambda_y' dy(\phi_Y)
    -\sum_{y \in Y} \lambda_y dy(\phi_Y')
    = \sum_{n \in N}\sum_{o(y)=n} \lambda_y'\phi_n - \sum_{n \in
    N}\sum_{o(y)=n} \lambda_y\phi'_n
    = 0.
  \]
  Observing that $Se$ has dimension
  \[
    \dim(\Phi(e))+ \dim(I(e)) = |N|+(|Y|-|N|) = |Y| = \tfrac12 \dim\big(\F^Y \oplus
    (\F^Y)^\ast\big)
  \]
  then proves the proposition.  
\end{proof}

It is not difficult to show that $S$ extends the strong symmetric monoidal functor of the same 
name previously constructed in Prop.\ \ref{prop:symplectification},
if we think of a function $f\colon X \to Y$ as a cospan of the form
$X \xrightarrow{f} Y \xleftarrow{1_Y} Y$.

We next use symplectification of cospans to construct the category of Lagrangian corelations.

\subsection{Lagrangian corelations}
\label{ssec:LagCorel}

We now construct a decorated corelation category with `Lagrangian corelations'
as morphisms.  This turns out to be isomorphic to our previous category with
Lagrangian relations as morphisms, but the new outlook lets us finish
constructing the black box functor.

To proceed, define the functor
\[
  \Lag\maps (\Cospan(\Fin\Set),+) \longrightarrow (\mathrm{Set},\times)
\]
to be the composite 
\[
\xymatrix{
      \Cospan(\Fin\Set) \ar[r]^-{\square}
  & \Fin\Corel \ar[r]^-{S}  
  & \Lag\Rel \ar[rr]^-{\Lag\Rel(\emptyset,-)}   
  && \Set
}
\]
where $\square$ is defined as in Lemma \ref{lemma:deccorel}, $S$ is defined
as in Thm.\ \ref{thm:symplectification}, and $\Lag\Rel(\emptyset,-)$ arises from the 
hom-functor of $\Lag\Rel$.    All these factors are strong symmetric monoidal
except for the last, which is lax symmetric monoidal because the empty set
is the unit for the tensor product in $\Lag\Rel$.   Thus, we obtain a lax symmetric
monoidal functor
\[   
  (\Lag,\lambda) \maps (\Cospan(\Fin\Set),+) \longrightarrow (\mathrm{Set},\times).
\]
A calculation show that this extends the same-named functor from Lemma \ref{lemma:Lag}.

\begin{theorem}
\label{thm:LagCorel} 
There is a hypergraph category $\Lag\Corel$ obtained by applying
Lemma~\ref{lemma:deccorel} to the lax symmetric monoidal functor
$\Lag \maps  (\Cospan(\Fin\Set),+) \longrightarrow (\mathrm{Set},\times)$. 
The category $\Lag\Corel$ has finite sets as objects, and for morphisms from $X$ to $Y$
it has Lagrangian subspaces of 
\[
  \vect{X+Y} \cong \vect{X} \oplus \vect{Y}
\]
Composition in $\Lag\Corel$ is enacted by the symplectification of the
cospan 
\[
  X+Y+Y+Z \xrightarrow{1_X+[1_Y,1_Y]+1_Z} X+Y+Z
  \xleftarrow{\iota_X+\iota_Z} X+Z.
\]
This relation relates 
\[
  (\phi_X,\phi_Y,\phi_Y',\phi_Z',i_X,i_Y,i_Y',i_Z') \in 
  \F^{X+Y+Y+Z}\oplus (\F^{X+Y+Y+Z})^\ast
\]
with 
\[
  (\phi_X,\phi_Z',i_X,i_Z') \in \F^{X+Z}\oplus (\F^{X+Z})^\ast
\]
if and only if two conditions hold: (i) $\phi_Y = \phi_Y'$ and (ii) $i_Y +i_Y' =
0$. 
The identity morphism on $X$ is the Lagrangian subspace $\{(\phi,-i,\phi,i) \mid
\phi \in \F^X, i \in (\F^X)^\ast\}$.

Lemma~\ref{lemma:deccorel} also provides a hypergraph functor
\[
 \square_\Lag \maps \Lag\Cospan \longrightarrow \Lag\Corel.
\]
\end{theorem}

\begin{proof}
To apply Lemma \ref{lemma:deccorel}, note that $\Cospan(\Fin\Set) = \C;\M^\opp$
where $\C = \M = \Fin\Set$.
\end{proof}

We see therefore that when composing two Lagrangian corelations the potentials
$\phi_Y$ and $\phi'_Y$ must agree, but on the
currents we have $i_Y=-i_Y'$. We intepret this to mean that the
morphisms of $\Lag\Corel$ record only the currents \emph{out} of the
nodes of the circuit. Thus if we were to interconnect two nodes, the current out
of the first must be equal to the negative of the current out---that is, the
current \emph{in}---of the second node. It is remarkable that this physical fact
is embedded so deeply in the underlying mathematics.

\begin{theorem}
\label{thm:LagCorel=LagRel}
There is an isomorphism of categories
\[
C \maps \Lag\Corel \longrightarrow \Lag\Rel 
\]
sending any finite set $X$ to itself and any morphism $L \subseteq \vect{X}
\oplus \vect{Y}$ to
\[
 CL = \{(\phi_X,-i_X,\phi_Y,i_Y) \mid (\phi_X,i_X,\phi_Y,i_Y) \in L\}.
\]
\end{theorem}

Equivalently, $CL$ is the image of $L$ under the symplectomorphism 
\[
\overline{1_X} \oplus 1_Y \maps \vect{X} \oplus \vect{Y}
\stackrel{\sim}{\longrightarrow} \overline{\vect{X}} \oplus \vect{Y}.
\]

\begin{proof}
  This proposed functor $C$ is identity on objects and fully faithful. Thus to prove
  the theorem we just need to check that it is indeed a functor. It is
  straightforward to observe that $C$ preserves identities.  To check that $C$
  preserves composition, suppose that we have Lagrangian corelations $L\colon X
  \to Y$ and $M\colon Y \to Z$. Then we can compute
  \begin{align*}
    C(M \circ L) 
    &= \bigg\{ (\phi_X,-i_X,\phi_Z,i_Z) \, \bigg\vert \,
    \begin{array}{c} 
    \textrm{there exists } \phi_Y \in \F^Y, i_Y \in (\F^Y)^\ast \textrm{ such
    that } \\
    (\phi_X,i_X,\phi_Y,i_Y) \in L, \quad (\phi_Y,-i_Y,\phi_Z,i_Z) \in M
    \end{array} 
    \bigg \} \\
    &= \bigg\{ (\phi_X,i_X,\phi_Z,i_Z) \, \bigg\vert \,
    \begin{array}{c} 
    \textrm{there exists } \phi_Y \in \F^Y, i_Y \in (\F^Y)^\ast \textrm{ such
    that } \\
    (\phi_X,-i_X,\phi_Y,i_Y) \in L, \quad (\phi_Y,-i_Y,\phi_Z,i_Z) \in M
    \end{array} 
    \bigg \} = CM \circ CL,
  \end{align*}
  proving functoriality.
\end{proof}

We use this isomorphism to transfer the hypergraph structure on $\Lag\Corel$ given
in Thm.\ \ref{thm:LagCorel} to $\Lag\Rel$.  They then become isomorphic as hypergraph
categories. As an aside, it is then not difficult to check that the
symplectification functor $S$ becomes a hypergraph functor.

\subsection{The black box functor}
\label{ssec:blackbox}

The black box functor maps any open circuit to the Lagrangian relation it imposes between
input and output potentials and currents.   In detail:

\begin{definition}
The \define{black box functor} $\blacksquare \maps \Circ \to \Lag\Rel$ 
is the composite
\[
\xymatrix{
  \Circ = \Circuit\Cospan \ar[r]^-{A}
  & \Dirich\Cospan \ar[r]^{B}  
  & \Lag\Cospan \ar[d]^{\square_\Lag}   
   \\
  & 
  & \Lag\Corel \ar[r]_-{\cong}^-{C} 
  & \Lag\Rel
}
\]
\end{definition}

\begin{theorem}
\label{thm:blackbox}
The black box functor is a hypergraph functor.    On objects it maps any 
finite set $X$ to the symplectic vector space $\blacksquare(X) = \vect{X}$.   
On morphisms it maps any open
  circuit $\Gamma$ with underlying cospan of finite sets $X \stackrel{i}\to N \stackrel{o}\leftarrow Y$ and passive linear circuit $(N,E,s,t,Z)$ to the Lagrangian relation
  \[
    \blacksquare(\Gamma) = (\overline{1_X}\oplus 1_Y) \circ S[i,o]^\opp
    \circ \mathrm{Graph}(dP)
    \subseteq \overline{\F^X \oplus (\F^X)^\ast} \oplus \F^Y \oplus (\F^Y)^\ast,
  \]
  where $P$ is the extended power functional of $\Gamma$.
  The coherence maps are given by the natural isomorphisms $\F^X \oplus
  (\F^X)^\ast \oplus \F^Y \oplus (\F^Y)^\ast \cong \F^{X+Y} \oplus
  (\F^{X+Y})^\ast$ and $\{0\} \cong \F^\varnothing \oplus (\F^\varnothing)^\ast$.
\end{theorem}

\begin{proof}
The black box functor is a hypergraph functor because it is the composite of
hypergraph functors $A$ (Thm.\ \ref{thm:A}), $R$ (Thm.\ \ref{thm:R}), $\square_\Lag$
(Thm.\ \ref{thm:LagCorel}), and $C$ (Thm.\ \ref{thm:LagCorel=LagRel}). The
formula for $\blacksquare(\Gamma)$ simply unpacks the definitions of these
functors. 
\end{proof}

Indeed, the formula states that the black box functor takes an open circuit $\Gamma$,
computes its extended power functional $P$ (the functor $A$), takes the graph of
the differential ($R$), restricts the resulting Lagrangian relation to the
boundary by enforcing Kirchhoff's laws on the interior ($\square_\Lag$), and
then flips the sign of the currents at the input nodes to indicate that they
are inputs ($C$).

\section{The main theorem} 
\label{sec:main}

At this point the reader might voice two concerns.   Firstly, why does the
\emph{black box} functor refer to the \emph{extended} power functional $P$ 
rather than the power functional $Q$, which is a function only of input and output
potentials?   Secondly, since this functor does not involve power minimization, how 
is it the same functor as that defined in Thm.~\ref{thm:main}? These fears are allayed
by the remarkable trinity of minimization, symplectification, and Kirchhoff's laws. 

We have seen that the symplectification functor $S$ maps a corelation to the behavior of the
corresponding circuit of ideal wires, governed by Kirchoff's laws (Section \ref{ssec:ideal_wires}).  
We have also seen that Kirchhoff's laws are closely related to the principle of minimum power 
(Thm.\ \ref{thm:circuit_behavior_from_power}).  The final aspect of this relationship
is that we can use symplectification to enact power minimization.

We use this link between symplectification and power minimization
to construct the commutative square here:
\[
\xymatrix{
  \Circ = \Circuit\Cospan \ar[r]^-{A}
  & \Dirich\Cospan \ar[r]^{B} \ar[d]_{\square_\Dirich} 
  & \Lag\Cospan \ar[d]^{\square_\Lag}   
   \\
  & \Dirich\Corel \ar[r]^{\widetilde{B}} 
  & \Lag\Corel  \ar[r]_{\cong}^-{C} & \Lag\Rel
}
\]
This shows that the black box functor, defined as the functor from $\Circ$ to
$\Lag\Rel$ obtained by the path through the top right corner of the
diagram, is the same as the functor obtained by the path through the lower left
corner, which is precisely that defined in Thm.~\ref{thm:main}.

\subsection{Dirichlet corelations}

First we construct $\Dirich\Corel$ as a decorated corelation category.
For this we must extend the functor $\Dirich \maps \Fin\Set \to \Set$ 
to the category $\Fin\Set;\mathrm{Inj}^\opp$, where
$\mathrm{Inj}$ is the category of finite sets and injections.   We know how to push
forward Dirichlet form along functions, thanks to Lemma~\ref{lemma:Dirich}; we now 
use power minimization to pull back Dirichlet forms along injections.
Thm.~\ref{thm:dirichlet_problem} implies 
that given a Dirichlet form $P$ on a finite set $N$, and an injection $m \maps Y
\to N$, there is a Dirichlet form $\min_{N \setminus \mathrm{im}(m)} P$ on $\mathrm{im}(m) \cong Y$.   
We henceforth identify this with a Dirichlet form on $Y$.   

\begin{lemma}
\label{lemma:Dirich2}
There exists a unique lax symmetric monoidal functor
\[
  (\Dirich, \delta) \maps (\Fin\Set;\mathrm{Inj}^\opp,+) \longrightarrow (\mathrm{Set},\times)
\]
that extends the functor $\Dirich$ of Lemma~\ref{lemma:Dirich} and has
\[ 
\begin{array}{rccl}  \Dirich(N\xrightarrow{1}N\xleftarrow{m} Y) \maps 
\Dirich(N) &\to& \Dirich(Y) \\
 P &\mapsto & \min_{N \setminus \mathrm{im}(m)} P .
 \end{array}
 \]
\end{lemma}

\begin{proof}
The category $\Fin\Set;\mathrm{Inj}^\opp$ is generated by the subcategories 
$\Fin\Set$ and $\mathrm{Inj}^\opp$.   We define the functor $\Dirich$ on $\Fin\Set$ as in 
Lemma~\ref{lemma:Dirich} and on morphisms in $\mathrm{Inj}^\opp$ as above.
Combining this with $\delta$ as defined in Lemma~\ref{lemma:Dirich}, 
this uniquely determines a lax symmetric monoidal functor from $\Fin\Set;\Inj^\opp$
to $\Set$.    It remains to check that this functor exists: namely, that it is well-defined on 
morphisms and preserves composition.

For this it suffices to prove that (i) for injections $m,m'$ we
have $\Dirich(\xrightarrow{i}\xleftarrow{m})
\Dirich(\xrightarrow{1}\xleftarrow{m'})=$
$\Dirich(\xrightarrow{1}\xleftarrow{mm'})$, and (ii) if a span $\xleftarrow{m}
\xrightarrow{f}$ with $m$ an injection has pushout $\xrightarrow{f'} \xleftarrow{m'}$, then we have
$\Dirich(\xrightarrow{1}\xleftarrow{m})$ $\Dirich(\xrightarrow{f}\xleftarrow{1})=$
$\Dirich(\xrightarrow{f'}\xleftarrow{1}) \Dirich(\xrightarrow{1}\xleftarrow{m'})$. 

Claim (i) is simply the observation that if we have a chain of inclusions $A
\subseteq B \subseteq C$, then for any Dirichlet form $P$ on $C$, we have
$\min_{B\setminus A} \min_{C\setminus B} P= \min_{C \setminus A} P$. This is
a consequence of Thm.~\ref{thm:dirichlet_problem}, which implies that
formal minimization of Dirichlet forms has a unique, well defined value: given
$\psi \in \F^C$, both $\min_{B\setminus A} \min_{C\setminus B} P(\psi)$ and
$\min_{C \setminus A} P(\psi)$ are equal to $P(\phi)$ for any extension $\phi$
of $\psi$ obeying the principle of minimum power for $P$.

Claim (ii) starts with a span $\xleftarrow{m} \xrightarrow{f}$ with $m \in
\Inj$ and $f \in \Fin\Set$. Without loss of generality, write this span as $A+B
\xleftarrow{j_B} B \xrightarrow{f} C$, where $j_B$ is the canonical inclusion.
Then the pushout cospan is $A+B\xrightarrow{1_A+f} A+C\xleftarrow{j_C}C$. We
must check that 
\[
\Dirich(A+B \xleftarrow{j_B} B \xrightarrow{f} C) = 
\Dirich(A+B\xrightarrow{1_A+f} A+C\xleftarrow{j_C}C).
\]
Thus, for each Dirichlet form $P$ on $A+B$, we must show that
$f_\ast \min_{A} P = \min_{A} (1_A+f)_\ast P$. 

To do this, take any $\psi \in \F^C$.  By Thm.\ \ref{thm:dirichlet_problem} we can
choose an extension $\phi \in \F^{A+C}$ 
of $\psi$ that obeys the principle of minimum power with respect to the Dirichlet form
$(1_A+f)_\ast P$, so that
\[   ((1_A + f)_\ast P)(\phi) = (\min_A (1_A + f)_\ast P)(\psi)  .\]
Since $\phi$ extends $\psi$,
we have that $\phi \circ (1_A+f) \in \F^{A+B}$ extends $\psi \circ f$.  Next,
write $(1_A+f)^\ast \maps \F^{A+C} \to \F^{A+B}$ for the pullback linear
transformation.   Using the chain rule one can show
\[
\frac{\partial P}{\partial \varphi(a)}\bigg\vert_{\varphi = \phi \circ (1_A+f)}
= \frac{\partial (1_A+f)_\ast P}{\partial\varphi(a)}\bigg\vert_{\varphi =
\phi} 
= 0.
\]
This implies that $\phi\circ (1_A+f)$ obeys the principle of minimum power with respect to $P$,
so
\[    P(\phi \circ (1_A + f)) = (\min_A P)(\psi \circ f) .\]
Using these observations and the definition of pushforward for Dirichlet forms,
we then have 
\[
(f_\ast \min_A P)(\psi) = (\min_A P)(\psi \circ f)= P(\phi\circ (1_A+f)) = (1_A+f)_\ast P(\phi) =
(\min_A((1_A+f)_\ast P))(\psi).
\]
This proves claim (ii), and so that $\Dirich$ is functorial.

For symmetric monoidal structure, we use the same lax coherence maps as in
Lemma~\ref{lemma:Dirich}. We just need to observe they are natural in
$\Inj^\opp$; that is, natural in minimization. This again follows from
Thm.~\ref{thm:dirichlet_problem}, in particular from the linearity condition
(v).
\end{proof}

In what follows we abuse language by calling a corelation from $X$ to $Y$ a
surjection $e \maps X + Y \to N$, when it is really an isomorphism class of such.

\begin{theorem}
\label{thm:DirichCorel} 
There is a hypergraph category $\Dirich\Corel$ obtained by applying
Lemma~\ref{lemma:deccorel} to the lax symmetric monoidal functor $\Dirich \maps
(\Fin\Set;\Inj^\opp,+) \longrightarrow (\mathrm{Set},\times)$.  The category
$\Dirich\Corel$ has finite sets as objects, with a morphism from $X$ to $Y$ being
a pair $(e, Q)$ where $e \maps X + Y \to N$ is a corelation from $X$ to $Y$
and $Q$ is a Dirichlet form on $N$.  

Composition of $(e,Q)\maps X \to Y$ and $(e', Q')\maps Y \to Z$ in
$\Dirich\Corel$ has two parts.  To obtain the corelation $e'\circ e\maps X+Z
\to M$, we simply compose the corelations $e\maps X+Y \to N$ and $e'\maps Y+Z
\to N'$. Note that this gives a morphism $N+N' \to N+_YN' \leftarrow M$ in
$\Fin\Set;\Inj^\opp$, where the first leg is the canonical map and the second
is the inclusion of the image of $X+Z$ into the pushout.  The composite
Dirichlet form is then the image of $Q+Q'$ under the action of this morphism. 

Lemma~\ref{lemma:deccorel} also provides a hypergraph functor
\[
 \square_\Dirich \maps \Dirich\Cospan \longrightarrow \Dirich\Corel.
\]
\end{theorem}
\begin{proof}
This follows from Lemma~\ref{lemma:deccorel} taking $\C =\Fin\Set$, 
$\E$ the subcategory of surjections between finite sets,
$\M = \Inj$, and $F = \Dirich$ described as in Lemma~\ref{lemma:Dirich2}.
\end{proof}

\subsection{Composition through power minimization}

We now construct an alternate route to the black box functor and use this to prove the main
theorem.   This alternate route involves a procedure for turning Dirichlet corelations into
Lagrangian corelations using the principle of minimum power.

\begin{lemma}\label{lem:dirichtolag}
Write $\Lag$ for the restriction of $\Lag$ to $\Fin\Set;\Inj^\opp$. The natural
transformation $\beta$ of Lemma~\ref{lemma:B} extends to a monoidal natural transformation 
\[
\xymatrix@C+3pc{
(\Fin\Set;\Inj^\opp,+) \rtwocell<5>^{\Dirich}_{\Lag}{\widetilde\beta} & (\Set,\times)
}
\]
\end{lemma}
\begin{proof}
As $\Fin\Set$ and $\Fin\Set;\Inj^\opp$ have the same objects, this extension of
$\beta$ has the same data: the map $\widetilde{\beta}_X$ is the map $\beta_X$ given in
Lemma~\ref{lemma:B} that sends a Dirichlet form $P$ to the Lagrangian relation
$\mathrm{Graph}(dP)$. Moreover, monoidality of this transformation is immediate.
We just need to show that $\widetilde{\beta}$ is also natural with respect to morphisms
in $\Inj^\opp$. More explicitly, let $m\colon Y \to N$ be
an injection, and let $P$ be a Dirichlet form on $N$. Write $Q = \min_{N
\setminus Y} P$ for the Dirichlet form on $Y$ given by
minimization over $N \setminus Y$. Then the naturality square
\[
\xymatrix{
\Dirich(N) \ar[rr]^{\Dirich(m^\opp)} \ar[d]_{\widetilde{\beta}_N} && \Dirich(Y)
\ar[d]^{\widetilde{\beta}_{Y}} \\
\Lag(N) \ar[rr]_{\Lag(m^\opp)} && \Lag(Y)
}
\]
asserts that for every $P \in \Dirich(N)$ we have the equality of Lagrangian subspaces
\[
  S(m^\opp )\circ \mathrm{Graph}(dP) = \mathrm{Graph}(dQ),
\]
where $Q = \Dirich(m^\opp)(P) = \min_{N \setminus Y} P$.

  To prove this we first compute $S(m^\opp)$.  One can do this using Thm.~\ref{thm:symplectification}, but it is quicker to note that since $S$ is a hypergraph
  functor, $S(m^\opp)$ is equal to the transpose of the relation $S(m)$, which is
  described by Prop.~\ref{prop:symplectification}.   Either way we get
  \[
    S(m^\opp) = \big\{(\phi, m_\ast i,m^\ast \phi, i) \, \big\vert
      \, \phi \in \F^{N}, i \in (\F^{Y})^\ast \big\} \subseteq
      \overline{\F^N \oplus (\F^N)^\ast} \oplus \F^{Y} \oplus
      (\F^{Y})^\ast.
  \]
  Since $\mathrm{Graph}(dP)$ consists of pairs $(\phi,dP_\phi) \in \vect{N}$, this
  implies
  \[
    S(m^\opp) \circ \mathrm{Graph}(dP) = \big\{(m^\ast \phi, i)
    \,\big\vert\, \phi \in \F^N, i \in (\F^{Y})^\ast, dP_\phi =
  m_\ast i \big\} \subseteq  \F^{Y} \oplus
      (\F^{Y})^\ast.
  \]
  We must show this Lagrangian subspace is equal to $\mathrm{Graph}(dQ)$.
  
  Consider the constraint $dP_\phi = m_\ast i$. This states that for all
  $\varphi \in \F^N$ we have $dP_\phi(\varphi) = i(\varphi\circ m)$. Letting
  $\chi_n: N \to \F$ be the function sending $n \in N$ to $1$ and all other
  elements of $N$ to $0$, we see that when $n \in N \setminus Y$ we
  must have
  \[
    \frac{dP}{d\varphi(n)}\Bigg\vert_{\varphi = \phi}  = dP_\phi(\chi_n) = 
    i(\chi_n \circ m) = i(0) = 0.
  \]
  Thus, $\phi$ is an extension of $\psi = \phi \circ m$ obeying the principle of minimum
   power.  As $m$ is injective, $\psi = \phi
  \circ m$ gives no constraint on $\psi \in \F^{Y}$. 
 
  We next observe that we can write $S(m^\opp) \circ \mathrm{Graph}(dP) =
  \mathrm{Graph}(dO)$ for some quadratic form $O$. Recall that Prop.\ 
  \ref{lemma:qfls} states that a Lagrangian subspace $L$ of $\F^{Y}
  \oplus (\F^{Y})^\ast$ is of the form $\mathrm{Graph}(dO)$ if and only
  if $L$ has trivial intersection with $\{0\} \oplus (\F^N)^\ast$. But indeed,
  if $\psi = 0$ then $0$ is an extension of $\psi$ obeying the principle of minimum
  power, so $m_\ast i =dP_0 = 0$, and hence $i = 0$. 
  
  It remains to check that $O = Q$. This is a simple computation:
  \[
    O(\psi) = dO_\psi(\psi) = dO_\psi(\tilde\psi \circ m) = m_\ast
    dQ_\psi(\tilde\psi) = dP_{\tilde\psi}(\tilde\psi) = P(\tilde\psi) = Q(\psi),
  \]
  where $\tilde\psi$ is any extension of $\psi \in \F^{Y}$ obeying the principle
  of minimum power.
\end{proof}

\begin{theorem}\label{thm:dirichtolag}
The hypergraph functor $\widetilde{B} \maps \Dirich\Corel \to
\Lag\Corel$ constructed by applying Lemma \ref{lemma:deccorelfunctor} to the 
natural transformation $\widetilde{\beta}$ in Lemma~\ref{lem:dirichtolag}
makes the following square commute:
\[
\xymatrix{
  \Dirich\Cospan \ar[r]^{B} \ar[d]_{\square_\Dirich} 
  & \Lag\Cospan \ar[d]^{\square_\Lag}   
   \\
   \Dirich\Corel \ar[r]^{\widetilde{B}} 
  & \Lag\Corel  
}
\]
\end{theorem}

\begin{proof}
The square commutes by the functoriality of the decorated corelation construction.    
Namely, by Lemma~\ref{lemma:deccorelfunctor}, each 2-cell in this diagram gives 
one of the hypergraph functors in the above square:
\[
\tikzcdset{every label/.append style = {font=\scriptsize}}
\begin{tikzcd}[row sep=20pt,column sep=35pt]
\Fin\Set \ar[dd,hookrightarrow,""{name=left}] \ar[rr,rightarrow,"1"{name=top}]
\ar[dr,"\Dirich"'{name=b1_in}, outer sep=-2pt] 
& 
& |[alias=b1_out]| \Fin\Set  \ar[dd,hookrightarrow,""{name=right}]
\ar[dl,"\Lag", outer sep=-1pt] \\
& |[alias=set]|\Set \\
\Fin\Set;\Inj^\opp \ar[rr,hookrightarrow] \ar[ur,"\Dirich"{name=b2_in},outer
sep=-2pt] 
&
& |[alias=b2_out]| \Cospan(\Fin\Set) \ar[ul,"\Lag"', outer sep=-3pt]
\arrow[Rightarrow, from=b1_in, to=b1_out, shorten <=20pt, shorten >=50pt,
"\beta"{description,pos=.35}]
\arrow[Rightarrow, from=b2_in, to=b2_out, shorten <=20pt, shorten >=35pt,
"\widetilde{\beta}"{description,pos=.42}]
\arrow[from=left, to=set, "\circlearrowright" description, draw=none]
\arrow[from=right, to=set, "\circlearrowright" description, draw=none]
\end{tikzcd}
\]
and because the composite of the bottom and left 2-cells equals the composite of the 
top and right 2-cells, the composite $\widetilde{B} \circ \square_{\Dirich}$ equals
the composite $\square_{\Lag} \circ B$.
\end{proof}

In the Introduction, lacking the machinery developed later, we gave a very
concrete description of the black box functor. The final piece of the puzzle, 
to make that description match what we have now, is as follows.

Write $\iota \maps \partial N \to N$ for the inclusion of the terminals into the
set of nodes of the circuit, and $i\rvert^{\partial N}\maps X \to \partial N$,
$o\rvert^{\partial N}\maps Y \to \partial N$ for the respective factorisations
of $i$ and $o$ through $\iota$. Note that $[i,o] = \iota \circ
[i\rvert^{\partial N}, o\rvert^{\partial N}]$. 
 In the Introduction, we
introduced the twisted symplectification $S^t$, which we can now understand as
an application of the functor $S$ followed by the isomorphism
$\overline{\idn_X}$ of a standard symplectic space with its conjugate.  Then we
have the equalities of sets, and thus Lagrangian relations:
\begin{align*}
  &\phantom{= .}(\overline{\idn_X}\oplus \idn_Y) \circ S[i,o]^\opp \circ
  \mathrm{Graph}(dP) \\
  &= (\overline{\idn_X}\oplus \idn_Y) \circ S[i\rvert^{\partial
  N},o\rvert^{\partial N}]^\opp \circ S\iota^\opp \circ \mathrm{Graph}(dP) \\
  &= (\overline{\idn_X}\oplus \idn_Y) \circ S[i\rvert^{\partial
  N},o\rvert^{\partial N}]^\opp \circ \mathrm{Graph}(dQ) \\
  &= \bigcup_{v \in \mathrm{Graph}(dQ)} S^ti\rvert^{\partial N}(v) \times
  So\rvert^{\partial N}(v)
\end{align*}
where $P$ is the extended power functional and $Q$ is the power functional.
We see now that Thm.~\ref{thm:blackbox} is a restatement of Thm.~\ref{thm:main} in 
the Introduction.

\end{document}